\documentclass[12pt,a4paper]{article}
\usepackage[utf8]{inputenc}
\usepackage[OT1]{fontenc}
\usepackage{amsmath}
\usepackage{amsthm}
\usepackage{amsfonts}
\usepackage{amssymb}
\usepackage{dsfont}
\usepackage{graphicx}
\usepackage{cite}
\usepackage{hyperref}
\usepackage{tikz-cd}
\usepackage{xcolor}
\usepackage{tikz}
\usepackage[left=2cm,right=2cm,top=2cm,bottom=2cm]{geometry}
\graphicspath{{pic/}}

\newtheorem{proposition}{Proposition}[section]
\newtheorem{lemma}{Lemma}[section]
\newtheorem{theorem}{Theorem}[section]
\newtheorem{corollary}{Corollary}[section]

\theoremstyle{definition}
\newtheorem{definition}{Definition}[section]
\newtheorem{remark}{Remark}

\newtheorem{example}{Example}[section]

\newtheorem*{solution*}{Solution}

\DeclareMathOperator{\supp}{supp}

\DeclareMathOperator{\dive}{div}

\DeclareMathOperator{\ran}{Ran}

  %%% Three Eye-Catching Stars

% \newcommand{\bbar}[1]{\overline{#1}}
 \newcommand{\bbar}[1]{\setbox0=\hbox{$#1$}\dimen0=.2\ht0 \kern\dimen0 \overline{\kern-\dimen0 #1}}

    % This makes a Dirac "D" with a slash through it. 
    % shortcut for a normal text dash mark -

 \DeclareMathOperator{\End}{\ensuremath{\mathcal{E}\kern-.125em\mathpzc{nd}}}

  % kills the old hom, which is lowercase
 
 \DeclareMathOperator{\Hom}{\mathcal{H}\kern-.125em\mathpzc{om}}
 
   % changes the default bullet in itemize
 \DeclareMathOperator{\id}{id}

 \DeclareMathOperator{\Proj}{\mathcal{P}\kern-.125em\mathpzc{roj}}

 \renewcommand{\setminus}{\smallsetminus}

 \newcommand{\udot}{\ensuremath{{\lower .183333em \hbox{\LARGE \kern -.05em$\cdot$}}}}

% Categories

%  \DeclareMathOperator{\Lie}	{{\sf Lie}}  

% Letter Short Cuts

 \newcommand{\cA}{\mathcal{A}}

 \newcommand{\cD}{\mathcal{D}}
 
 \newcommand{\cF}{\mathcal{F}}
 \newcommand{\cG}{\mathcal{G}}
 \newcommand{\cH}{\mathcal{H}}

 \newcommand{\cK}{\mathcal{K}}
 \newcommand{\cL}{\mathcal{L}}

\newcommand{\cS}{\mathcal{S}}

 \newcommand{\cV}{\mathcal{V}}
 
 \newcommand{\cX}{\mathcal{X}}
 
 \newcommand{\cZ}{\mathcal{Z}}

  % old \H{x} is an x with a weird umlaut in text mode

 \newcommand{\N}{\mathbb{N}}

 \newcommand{\R}{\mathbb{R}}

 \newcommand{\U}{\mathbb{U}}

  \newcommand{\Z}{\mathbb{Z}}

 \newcommand{\fg}{\mathfrak{g}}

 \newcommand{\p}{\partial}

 \DeclareMathOperator{\rank}{rank}
 \DeclareMathOperator{\spann}{span}

 \DeclareMathOperator{\Prim}{Prim}  
 \DeclareMathOperator{\Lie}{Lie} 
 \DeclareMathOperator{\Diff}{ Diff}

\author{Ivan Beschastnyi}

\title{Closure of the Laplace-Beltrami operator on 2D almost-Riemannian manifolds and semi-Fredholm properties of differential operators on Lie manifolds}

\begin{document}

\maketitle

\abstract{The problem of determining the domain of the closure of the Laplace-Beltrami operator on a 2D almost-Riemannian manifold is considered. Using tools from theory of Lie groupoids natural domains of perturbations of the Laplace-Beltrami operator are found. The main novelty is that the presented method allows us to treat geometries with tangency points. This kind of singularity is difficult to treat since those points do not have a tubular neighbourhood compatible with the almost-Riemannian metric.}

\section{Introduction}

Singular differential equations are ubiquitous in mathematics and physics. Some of these singularities arise from singular changes of variables such as the passage from Euclidean to spherical coordinates. Some singularities are structural and constitute an important part of a theory. A good example is the electrostatic potential often called the Coulomb potential which is equal to one over the distance to a chosen point. No matter what is the reason for the singularity, such structures are called singular in a contrast to regular structures. Very often this is just an indication that an important standard tool, whether this is the implicit function theorem or elliptic estimates, which worked perfectly well in the regular class of objects is no longer available for the new class of objects we wish to study.

Sub-Riemannian geometry from this point of view can be seen as a singular analogue of Riemannian geometry. Indeed, there are many Riemannian results that find analogues in the sub-Riemannian realm. Just to name a few: sub-elliptic estimates~\cite{stein_roth,bramanti}, Laplacian comparison theorems~\cite{lap_comp}, Brunn-Minkowski inequalities~\cite{davide_luca}, various Hardy-type inequalities~\cite{vale_hardy}, Weyl laws~\cite{weyl_sr} and others. However those results are not merely copies of proofs from Riemannian geometry. Sub-Riemannian geometry had to reinvent many tools from scratch and rethink the notion of geodesics, curvature, Jacobi fields or volume so that those concepts would apply to both Riemannian and sub-Riemannian spaces. This extended the class of objects that we would consider regular.

The modern definition of a sub-Riemannian manifold is very general~\cite{abb}. We will give it later in Section~\ref{sec:ar_geom}. If we want to capture it essence, it is already possible to do it locally. Let $X_1,\dots,X_k\in \Gamma(TM)$ be some vector fields. We define $\cD$ to be the $C^\infty(M)$-module generated by $X_1,\dots,X_k$, which can be seen as a possibly rank-varying distribution of planes 
$$
\cD_q = \spann\{X_1(q),\dots X_k(q)\}.
$$ 
Moreover we declare $X_1,\dots,X_k$ to be orthonormal. At this moment it is not entirely clear what orthonormality means at the points where $X_i$, $i\in\{1,\dots,k\}$ are linearly dependent and this will be clarified once the global definition is stated. In particular, some of $X_i$, $i\in\{1,\dots,k\}$ can vanish. If for a generic point $q$ of a sub-Riemannian manifold $M$ we have $\rank \cD_q = \dim M $, then we call such a structure \textit{almost-Riemannian}. In this case the set of points $q\in M$ where $\rank \cD_q < \dim M $ is called \textit{the singular set} and we will denote it by $\cZ$.

Almost-Riemannian structures were extensively studied in~\cite{ar1,ar2,ar3,ar4,ar5,ar6,ar7,grushin}. Unlike sub-Riemannian manifolds they are equipped with an array of canonical Riemannian objects, such as a curvature or volume. However all of those quantities explode at the singular set $\cZ$. In particular, an almost-Riemannian manifold has an infinite Riemannian volume. Thus if the singular set $\cZ$ is a smooth embedded submanifold of $M$, we can look at the almost-Riemannian manifold $M$ in two different ways: as the original complete sub-Riemannian manifold or as a non-complete open Riemannian manifold $M\setminus \cZ$. 

These two points of view are in sharp contrast with each other. For example, if on a 2D almost-Riemannian manifold the Gaussian curvature is negative everywhere where it is defined, the almost-Riemannian geodesics can still converge~\cite{so_me}. Or if we look at the corresponding Laplace-Beltrami operator $\Delta$ with domain $D(\Delta)$ being equal to $C^\infty_c(M\setminus \cZ)$ the space of smooth functions with compact support outside the singular set, then this operator is essentially self-adjoint. This implies that geodesics can easily cross the singular set, while a quantum particle or heat flow can not~\cite{grushin}. This phenomena is now known in the literature as quantum confinement.

If we want to understand sub-Riemannian structures at the level of generality of Definition~\ref{def:sub} we need first to understand the simplest case of 2D almost-Riemannian manifolds. Those structures were locally classified in~\cite{ar3}. The authors of that article proved that structurally there are three generic types of local behaviour:
\begin{enumerate}
\item There might be no singularity at $q\in M$, i.e., $q\notin \cZ$. We call such a point a \textit{Riemannian point}. A good example of a space without any singular points is just the Euclidean plane which can be seen as an almost-Riemannian structure generated by two vector fields
$$
X_1 = \p_x, \qquad X_2 = \p_y;
$$
\item If $q\in \cZ$ and $\dim \Delta_q = 1$ with $\Delta_q$ transversal to the singular set $\cZ$ we call $q\in \cZ$ a \textit{Grushin point}. A good example of an almost-Riemannian structure with only Grushin and Riemannian points is given by the Grushin plane generated by two vector fields on $\R^2$:
$$
X_2 = \p_x, \qquad X_2 = x\p_y;
$$ 
\item If $q\in \cZ$ and $\dim \Delta_q = 1$ with $\Delta_q$ tangent to the singular set $\cZ$ we call $q\in \cZ$ a \textit{tangency point}. Two give an example consider a structure on $\R^2$ generated by two vector fields:
$$
X_2 = \p_x, \qquad X_2 = (y-x^2)\p_y;
$$ 

\end{enumerate}
The last example contains all three types of points and is depicted in Figure~\ref{fig:class}.

\begin{figure}
\begin{center}
\includegraphics[scale=0.7]{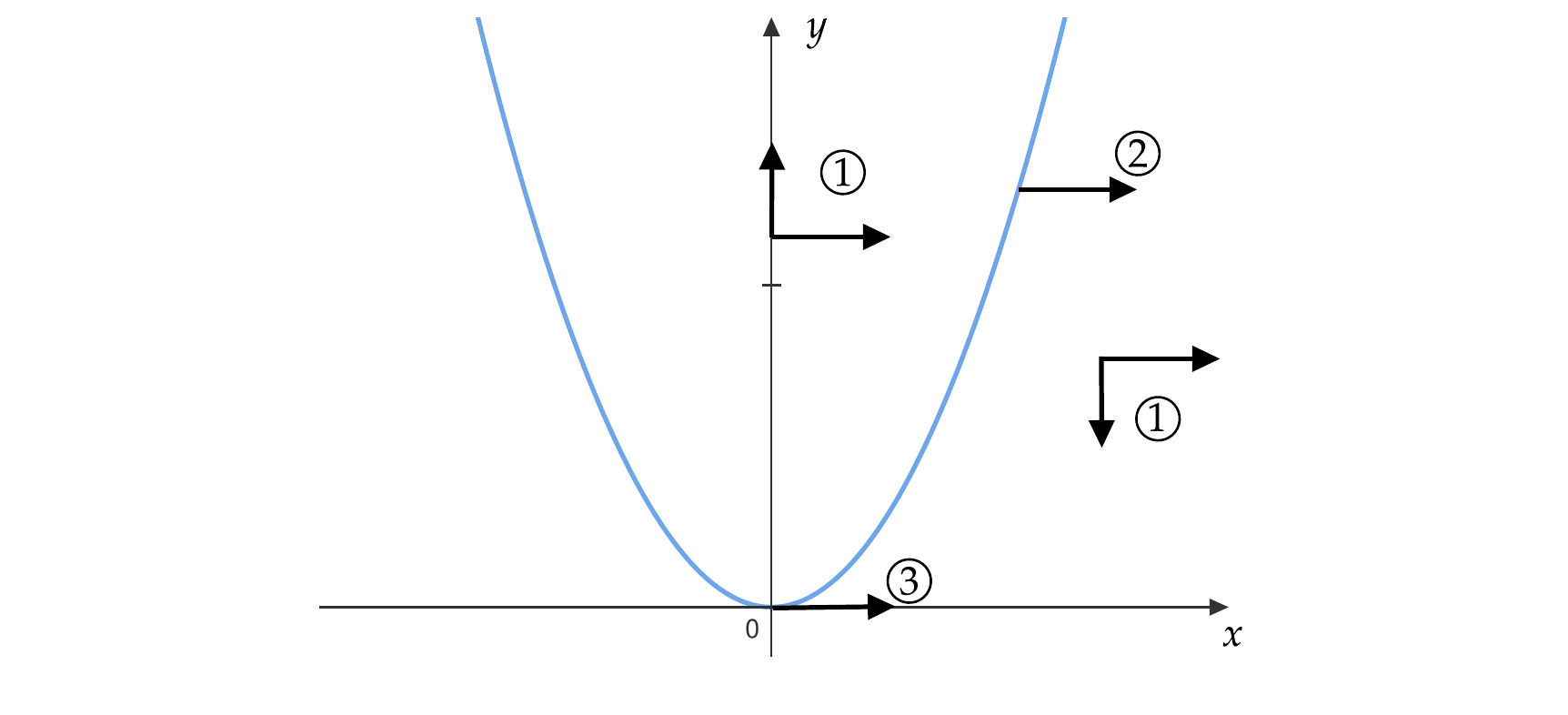}
\caption{Three generic types of points on a 2D almost-Riemannian manifold with singular set given by the parabola $y=x^2$: 1) Riemannian points, 2) Grushin points, 3) tangency points.\label{fig:class}}
\end{center}
\end{figure}

Let us write down the Laplace-Beltrami operators for each structure in order to see concretely what kind of singularities occur. We have
\begin{align*}
\Delta_{Euclidean} & = \p_x^2 + \p_y^2;\\
\Delta_{Grushin} &= \p_x^2 + x^2 \p_y^2 - \frac{1}{x}\p_x; \\
\Delta_{Tangency} &= \p_x^2 + (y-x^2)^2 \p_y^2 + \frac{2x}{y-x^2}\p_x + (y-x^2)\p_y.
\end{align*}
One can ask many natural questions about these operators. What are their natural domains? What can we tell about their spectrum? For which classes of functions we can solve the Poisson equation? And many others. However when we try to answer these questions we find that techniques that work well for Riemannian structures often do not work in the presence of Grushin points. Similarly the techniques that work for Grushin structures often do not work in the presence of the tangency points. In an excellent series of papers~\cite{luca1,luca2,luca3} the authors derived very general conditions for quantum confinement. However those conditions rely on an assumption that the distance from the singular set is at least $C^2$. While this assumption indeed holds for Grushin structures, it is never true for structures with tangency points as the authors themselves point out in~\cite{luca3}.

In this paper we study the closure of the Laplace-Beltrami on a generic compact orientable 2D almost-Riemannian manifold without a boundary. We wish to find a result that would hold for all generic structures no matter the singularity. We will only consider the singular cases, since the non-singular Riemannian case is covered by the standard elliptic theory. The method that we employ here for certain reasons does not apply to the Laplace-Beltrami operator itself. It works however for certain perturbations of the operator and for some non-generic structures. In the non-perturbed generic case it is still possible to extract some useful information about the closure as we will see in a simple model example.

If we wish to deal with singular objects the first thing one might want to do is to look at how singularities are treated in other branches of mathematics. If we wish to stay in the realm of differential geometry, then Poisson geometry is a good example. Indeed, regular Poisson structures are symplectic manifolds and for them the Poisson tensor is non-degenerate. Singular Poisson structures however arise very naturally in the study of Lie groups as Lie-Poisson structures of duals of Lie algebras. These singularities in some sense are of a similar nature to the sub-Riemannian singularities as in both cases they arise from some rank dropping conditions. In order to study all the variety of singular phenomena it is common to use \textit{Lie groupoids} and \textit{Lie algebroids} which became a standard language of Poisson geometry~\cite{poisson}. 

Lie groupoids have many faces and definitions. In Section~\ref{sec:lie} we will give a purely differential geometric definition. They can be seen as objects that interpolate between manifolds and Lie groups or as manifolds with partial symmetries. However for our purposes they will serve as natural desingularisations of singular spaces. Besides Poisson geometry they indeed are used in this fashion when studying orbifolds~\cite{orbifolds}, foliations~\cite{folliations} or just manifolds with boundary~\cite{boundary}. In the recent years it was understood that Lie groupoids constitute a good class of spaces to do analysis on. One can construct, for example, pseudo-differential calculus~\cite{vanerp} and Fourier integral operators~\cite{lescure} adapted to their algebraic structure, use them to prove index theorems~\cite{connes} or study the essential spectrum of differential operators~\cite{nistor_fred}.

For our problem of finding the closure of the Laplace-Beltrami operator we will only need a special class of Lie groupoids that come from Lie manifolds introduced in~\cite{sobolev}. A Lie manifold is a pair $(M,\cV)$, where $M$ is a smooth manifold with boundary and $\cV$ is a Lie subalgbera of the Lie algebra of vector fields tangent to the boundary $\p M$ satisfying certain conditions. In particular, in the interior of $M$ set $\cV$ coincides with the Lie algebra of vector fields. The difference arises from the behaviour at the boundary. To each Lie manifold it is possible to associate compatible structures: metric $g_\cV$, volume $\mu_\cV$ and Sobolev spaces $H^k_{\cV}(M)$.

We will endow every compact almost-Riemannian manifold with a Lie manifold structure. For this purpose we will cut the almost-Riemannian manifold along the connected components and view it as an interior of a Riemannian manifold with boundary. To each boundary component $\cZ_i$ (or their union) we can then associate a \textit{defining function}, which is a bounded function $s\geq 0$ such that $s(q) =0$ if, and only if, $q\in \cZ_i$ and $ds|_{\cZ_i} \neq 0$. We are now ready to state the result about the closure of the Laplace-Beltrami operator.

\begin{figure}[ht]
\begin{center}
\includegraphics[scale=1]{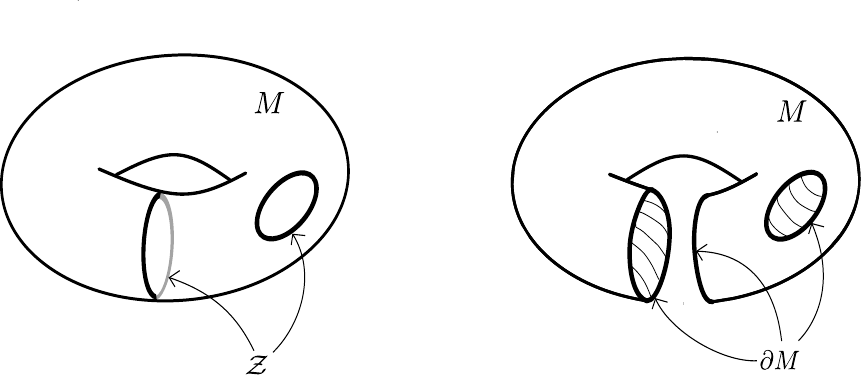}
\caption{An almost-Riemannian manifold $M$ before and after the cut along the singular set $\cZ$.\label{fig:cut}}
\end{center}
\end{figure} 

\begin{theorem}
\label{thm:main}
Consider a compact generic 2D almost-Riemannian manifold $(M,\U,f)$ and associate to it a Riemannian manifold by cutting $M$ along the singular set $\cZ$ and take a connected component, which we denote via $M$ by abusing the notation. Let $\omega$ be the Riemannian volume form, $\Delta$ the associated Laplace-Beltrami operator and $\cV$ the associated Lie manifold structure. Suppose that $h\in C^\infty(M)$ is a strictly positive on $\cZ$ function and let $s$ be a defining function of the singular set $\cZ$. Define
$$
\tilde{\Delta} = \Delta - \frac{h}{s^2}
$$
with domain $D(\tilde{\Delta}) = C^\infty(M\setminus \cZ)$.

Then the domain of closure of $\tilde{\Delta}$  in $L^2(M,\mu)$
$$
D (\overline{\tilde{\Delta}} ) = s H^2_{\cV}(M).
$$
If there are no tangency points, then $h$ can be taken just non-vanishing on $\cZ$.
\end{theorem}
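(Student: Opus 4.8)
The plan is to realise $\tilde{\Delta}$ as a weight times a genuinely $\cV$-elliptic operator, and then to read off its closed domain from the semi-Fredholm and elliptic-regularity theory for Lie manifolds that forms the second half of this paper. The starting point is the algebraic observation that, although $\tilde{\Delta}$ itself does not lie in the enveloping algebra $\mathrm{Diff}^2_{\cV}(M)$, the symmetric weighting $s\,\tilde{\Delta}\,s$ does. Concretely, in the Grushin normal form of~\cite{ar3} one computes
\[
s\,\tilde{\Delta}\,s = (x\p_x)^2 + (x^2\p_y)^2 - (1+h),
\]
which is manifestly $\cV$-elliptic with $\cV$ generated near $\cZ$ by $x\p_x$ and $x^2\p_y$; the point of the singular term $-h/s^2$ in the definition of $\tilde{\Delta}$ is exactly to absorb the inverse-square contributions produced by the weighting and by the first-order drift of $\Delta$, leaving only a bounded zeroth-order coefficient. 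I would therefore set $Q := s\,\tilde{\Delta}\,s$, check that $Q \in \mathrm{Diff}^2_{\cV}(M)$ and is $\cV$-elliptic in all three local models, and reduce the theorem to the single assertion that the closed realisation of $Q$ in $L^2(M,\mu_\cV)$ has domain $H^2_{\cV}(M)$: unwinding the weight then yields $D(\overline{\tilde{\Delta}}) = s\,H^2_{\cV}(M)$, with the measures $\mu$ and $\mu_\cV$ matched so that multiplication by $s$ carries one domain onto the other.

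Granting the reduction, the ellipticity of $Q$ gives the interior and the $\cV$-regularity estimate for free: elliptic $\cV$-pseudodifferential theory upgrades $u, Qu \in L^2(M,\mu_\cV)$ to $u \in H^2_{\cV}(M)$, so the maximal domain of $Q$ is contained in $H^2_{\cV}(M)$, and conversely $Q$ maps $H^2_{\cV}(M)$ boundedly into $L^2(M,\mu_\cV)$. What remains, and what the abstract theorem requires, is that the \emph{boundary (indicial) operators} of $Q$ be invertible; this is what forces the minimal and maximal domains to coincide and rules out any extra $L^2$-boundary behaviour, pinning the closed domain to $H^2_{\cV}(M)$. I would obtain the indicial family by freezing the coefficients of $Q$ along $\cZ$ and passing to the fibres of the integrating groupoid, i.e. by Mellin-transforming in the normal variable $s$.

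The invertibility of this family is where the hypothesis on $h$ and the distinction between the two cases enter. At a Grushin point the normal model is a one-dimensional operator whose indicial roots are governed by the constant term, here $-(1+h)$; as long as $h$ does not vanish the roots stay off the critical weight line, which is exactly why non-vanishing of $h$ suffices in the tangency-free case. At a tangency point the normal model is genuinely two-dimensional and considerably more degenerate, and I expect its indicial family to be invertible only once $h$ is \emph{strictly positive}, the positive constant being needed to shift the relevant spectrum across the imaginary axis. I would treat the two model operators separately, verifying the hypothesis ``boundary operators invertible'' of the semi-Fredholm theorem in each, and thereby also establish the sharper statement of the last sentence.

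The main obstacle is precisely the tangency locus. Because $\cZ$ has no tubular neighbourhood compatible with the almost-Riemannian metric there --- the very failure of $C^2$-regularity of the distance that obstructs the approach of~\cite{luca1,luca2,luca3} --- the Lie structure $\cV$ cannot be read off from a product decomposition, and its construction, together with the integrating groupoid and the explicit indicial family near a tangency point, is the delicate technical heart of the argument. Once $\cV$ and the groupoid are in place and the boundary operators are shown invertible, the semi-Fredholm theorem closes the argument: $\overline{Q}$ has domain $H^2_{\cV}(M)$, and transporting through the weight $s$ gives $D(\overline{\tilde{\Delta}}) = s\,H^2_{\cV}(M)$.
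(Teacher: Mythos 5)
Your proposal has the right skeleton (conjugate by $s$, work in $\Diff^2_\cV(M)$, verify ellipticity plus invertibility of boundary models, invoke semi-Fredholm theory), but the central functional-analytic reduction is flawed. You propose to show that the closed realisation of $Q=s\tilde{\Delta}s$ as an unbounded operator on $L^2(M,\mu_\cV)$ has domain $H^2_\cV(M)$ and then to ``unwind the weight''. This unwinding does not work: since $\tilde{\Delta}=s^{-1}Qs^{-1}$ carries a factor of $s^{-1}$ on \emph{both} sides, writing $u=sv$ one finds
$$
\|u\|^2_{L^2(M,\omega)}+\|\tilde{\Delta}u\|^2_{L^2(M,\omega)}=\|s^2v\|^2_{L^2_\cV(M)}+\|Qv\|^2_{L^2_\cV(M)},
$$
which is strictly weaker than the graph norm $\|v\|^2_{L^2_\cV(M)}+\|Qv\|^2_{L^2_\cV(M)}$ of $Q$ at $v$. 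Hence ``$D(\overline{Q})=H^2_\cV(M)$'' yields only the easy inclusion $sH^2_\cV(M)\subseteq D(\overline{\tilde{\Delta}})$, not the reverse one. Moreover, your route to the statement about $Q$ --- $\cV$-elliptic regularity bounding the maximal domain by $H^2_\cV(M)$, with invertibility of the boundary operators needed only ``to force minimal and maximal domains to coincide'' --- misidentifies the role of the CNQ conditions: the minimal domain of $Q$ contains $H^2_\cV(M)$ automatically, because the graph norm is dominated by the $H^2_\cV$-norm and $C^\infty_c(M_0)$ is dense in $H^2_\cV(M)$ by definition. So your argument, taken literally, would give $D(\overline{Q})=H^2_\cV(M)$ --- and, via your transport step, the theorem --- with no hypothesis on $h$ whatsoever. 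That conclusion is false: the paper's 1D analysis (Proposition~\ref{prop:prop2} and the comparison with~\cite{georgescu3}) shows that in the critical case the domain of the closure is strictly larger than the corresponding weighted space. What is actually needed, and what the paper does, is the closed-range/left semi-Fredholm property of the \emph{bounded} operator $\tilde{P}=s\tilde{\Delta}s:H^2_\cV(M)\to L^2_\cV(M)$, fed into Proposition~\ref{prop:mendoza}; this quantitative estimate (unlike the qualitative domain statement) does transport through the weights, because multiplication by powers of $s$ gives isometries of the weighted spaces, and it is precisely this property, via Theorem~\ref{thm:cnq}, that requires left invertibility of the limit operators and hence the hypothesis on $h$.

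Second, your picture of the boundary models inverts where the difficulty lies and is not how these operators arise. The limit operators are not obtained by Mellin-transforming in the normal variable; they are right-invariant operators on the isotropy groups of the integrating groupoid (Theorem~\ref{thm:groupoid}). At a \emph{tangency} point the group is the abelian $\R^2$ and the limit operator is $\Delta_{\R^2}-h(q)$: its left invertibility (if, and only if, $h(q)>0$) is a one-line Fourier computation, so the tangency case is analytically the easy one --- the hard part there is constructing the Hausdorff integrating groupoid, as you correctly note. At a \emph{Grushin} point the group is the non-abelian affine group, the limit operator is $Z_1^2+Z_2^2-1-h(q)$, and proving its left invertibility (if, and only if, $h(q)\neq 0$) is the longest argument in the paper (Proposition~\ref{prop:grushin_suffering}): a partial Fourier transform, reduction to the one-dimensional operator $(x\p_x)^2-x^4-1-h_0$, injectivity via Bessel-function asymptotics, a second application of the CNQ conditions to that model operator, and tensor-product lemmas to return to the group. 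Your brief indicial-root discussion at Grushin points does not substitute for this analysis, so even after repairing the reduction above, the invertibility step would still have to be supplied.
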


The proof will rely on the following Proposition proved in~\cite{mendoza}.
\begin{proposition}
\label{prop:mendoza}
Let $H_1, H_2$ be Hilbert spaces, and let $D$ be a Banach subspace of $H_1$ equipped with a norm $\|\cdot\|_D$ such that the inclusion map $(D,\|\cdot\|_D)\to (H_1,\|\cdot\|_{H_1})$ is continuous and assume that:
\begin{enumerate}
\item The range of $A$ is closed.
\item The kernel $\ker A \subset H_1$ is closed with respect to $\|\cdot\|_{H_1}$.
\end{enumerate}
Then the operator $A$ with domain $D$ is closed, i.e., $D$ is complete with respect to the graph norm $\|u\|_{A} = \|u\|_{H_1} + \|Au\|_{H_2}$.
\end{proposition}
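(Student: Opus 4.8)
The plan is to deduce completeness of $(D,\|\cdot\|_A)$ from the completeness of $(D,\|\cdot\|_D)$ by an open mapping argument, with the two hypotheses serving to bridge the two norms. I read the statement with $A\colon (D,\|\cdot\|_D)\to(H_2,\|\cdot\|_{H_2})$ a bounded linear operator, so that $\|Au\|_{H_2}\le C\|u\|_D$ and, by continuity of the inclusion, $\|u\|_{H_1}\le C'\|u\|_D$; hence the graph norm already satisfies $\|u\|_A\le(C+C')\|u\|_D$. The whole content is therefore the \emph{reverse} control, and this is exactly where the closed-range and closed-kernel assumptions enter. Equivalently, I will show that the graph of $A$ is closed in $H_1\times H_2$.

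First I would record that $N:=\ker A$ is automatically closed in $(D,\|\cdot\|_D)$, being the preimage of $\{0\}$ under the bounded map $A$; thus the quotient $D/N$ is again a Banach space. The operator $A$ descends to an injective bounded map $\bar A\colon D/N\to H_2$ whose image is $\ran A$. By hypothesis (1) the subspace $\ran A$ is closed in the Hilbert space $H_2$, hence a Banach space in its own right, so $\bar A\colon D/N\to\ran A$ is a bounded linear bijection between Banach spaces. The bounded inverse theorem then yields a constant $c>0$ with
\[
\inf_{v\in N}\|u-v\|_D=\|\bar u\|_{D/N}\le c\,\|Au\|_{H_2}\qquad\text{for all }u\in D.
\]
This is the crucial estimate: it says that modulo the kernel, the $D$-norm is controlled by $\|Au\|_{H_2}$.

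With this in hand I would take a sequence $(u_n)\subset D$ that is Cauchy for $\|\cdot\|_A$, so that $u_n\to u$ in $H_1$ and $Au_n\to w$ in $H_2$, and argue that $u\in D$ with $Au=w$. Since $\ran A$ is closed, $w=Au_0$ for some $u_0\in D$; applying the quotient estimate to $u_n-u_0$ produces $v_n\in N$ with $\|u_n-v_n-u_0\|_D\to 0$. Setting $p_n:=u_n-v_n$ gives $p_n\to u_0$ in $D$, hence in $H_1$, while $Ap_n=Au_n\to w$. Consequently the kernel components $v_n=u_n-p_n$ converge in $H_1$ to $u-u_0$; here hypothesis (2), the closedness of $N$ in the $H_1$-topology, is precisely what forces $u-u_0\in N\subset D$. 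Therefore $u=u_0+(u-u_0)\in D$ and $Au=Au_0=w$, so $u_n\to u$ in the graph norm and $D$ is complete.

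I expect the open mapping step to be the conceptual heart, but the genuinely delicate point—and the reason hypothesis (2) cannot be dropped—is the final identification: the representatives $p_n$ converge in $D$ only up to the kernel, and recovering the limit inside $D$ requires that the leftover kernel directions, which a priori converge only in the weaker $H_1$-norm, remain in $N$. Closedness of $\ker A$ in $H_1$ is exactly the hypothesis that makes this closure step legitimate; without it the $H_1$-limit of kernel elements could escape $D$ and the graph would fail to be closed.
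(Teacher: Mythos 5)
Your proof is correct. Note that the paper itself contains no proof of Proposition~\ref{prop:mendoza}; it is quoted from~\cite{mendoza}, so there is no internal argument to compare against. Your route — factor $A$ through the Banach quotient $D/\ker A$ (the kernel being $\|\cdot\|_D$-closed by boundedness of $A$ on $D$), apply the bounded inverse theorem to the bijection onto the closed range to obtain the estimate $\inf_{v\in\ker A}\|u-v\|_D\le c\,\|Au\|_{H_2}$, and then use the $H_1$-closedness of $\ker A$ to recapture the limit inside $D$ — is the standard argument for this kind of statement and is essentially the one in the cited reference. Both hypotheses enter exactly where they must: closed range is used twice (to make $\ran A$ a Banach space for the open-mapping step, and to write the limit $w=Au_0$ with $u_0\in D$), while hypothesis (2) is what legitimises the final step $u-u_0\in\ker A\subset D$, since the kernel components $v_n$ converge only in the $H_1$-norm.
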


Note that in particular $A$ can be left semi-Fredholm, which means that $\dim \ker A< +\infty$ and $\ran A$ is closed. The strategy of the proof of Theorem~\ref{thm:main} will consist of the following steps:
\begin{enumerate}
\item Prove that functions $C^\infty_c( M)$ are dense in $s H^2_{\cV}(M)$;
\item Prove that $s H^2_{\cV}(M)$ is continuously embedded into $L^2(M,\omega)$;
\item Prove that $\tilde{\Delta}:s H^2_{\cV}(M) \to L^2(M,\omega)$ is left semi-Fredholm.
\end{enumerate}

The first two points will essentially from the definitions (see also~\cite{sobolev} the general theory of Sobolev spaces on Lie manifolds). The main difficulty is the third point. A classical way to prove that an operator is left semi-Fredholm is to construct a left parametrix. This is indeed exactly what authors of~\cite{mendoza} did. However they construct a pseudodifferential calculus adapted to their problem and following this approach each time is quite difficult. In~\cite{nistor_fred} Nistor, Carvalho and Qiao provide an alternative approach for determining whether a pseudodifferential operator (PDO) on an open manifold $M$ is Fredholm. More precisely, they prove the following result.

\begin{theorem}[Carvalho-Nistor-Qiao (CNQ) conditions]
Let $P$ be an order $m$ classical PDO on an open manifold $M$ compatible with the geometry. Then one can associate to $P$ the following data:
\begin{enumerate}
\item Smooth manifolds $M_\alpha$, parametrised by a suitable set $I$;
\item Simply connected Lie groups $G_\alpha$ acting freely and properly on $M_\alpha$, $\alpha \in I$;
\item Limit operators $P_\alpha$, which are $G_\alpha$-invariant pseudodifferential operators on $M_\alpha$;
\end{enumerate}
and the following statement holds:
$$
P: H^s(M) \to H^{s-m}(M) \text{ is Fredholm} \iff P \text{ is elliptic and}
$$
$$
P_\alpha: H^s(M_\alpha) \to H^{s-m}(M_\alpha) \text{ are invertible for every } \alpha \in I
$$
for some suitable Sobolev spaces $H^k$.
\end{theorem}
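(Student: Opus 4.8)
The plan is to produce the abstract data of the theorem by integrating the Lie structure to a Lie groupoid and then to convert the Fredholm problem into an invertibility problem inside a $C^*$-algebra, which can be tested representation by representation. First I would integrate the Lie algebroid $A$ whose sections are the vector fields compatible with the geometry to a Lie groupoid $\cG$ with unit space $M$; since the anchor of $A$ is almost injective (it is an isomorphism over the dense open set $M_0 = M\setminus\cZ$), Debord's integrability criterion applies and $\cG$ exists. The order-$m$ operators compatible with the geometry are then exactly the images, under the vector representation $\varrho$ on the units, of elements of the groupoid pseudodifferential calculus $\Psi^m(\cG)$~\cite{vanerp}. So the first step is to identify the given $P$ with such a $\varrho(Q)$ and to work from now on inside $\Psi^\bullet(\cG)$.

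Next I would reduce to order zero. Fixing an elliptic positive $R\in\Psi^m(\cG)$ invertible modulo smoothing operators (for instance a suitable power of $1+\Delta_\cV$), the map $u\mapsto Ru$ is an isomorphism $H^s\to H^{s-m}$ up to a smoothing correction, so $P\colon H^s\to H^{s-m}$ is Fredholm if and only if the order-zero operator $PR^{-1}$ is Fredholm on $L^2(M,\mu_\cV)$. This places the problem inside the $C^*$-algebra $\cA:=\overline{\Psi^0(\cG)}$. The principal symbol extends to a surjection $\sigma_0\colon\cA\to C(S^*A)$ onto continuous functions on the cosphere bundle of the algebroid, whose kernel is the reduced groupoid algebra $C^*(\cG)$. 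Invertibility of $\sigma_0(PR^{-1})$ is precisely ellipticity of $P$, and it produces a parametrix modulo $C^*(\cG)$.

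The heart of the argument is to locate the genuine compacts and the limit operators inside $C^*(\cG)$. The restriction of $\cG$ to the open dense orbit $M_0$ is the pair groupoid $M_0\times M_0$, and $C^*(M_0\times M_0)\cong\cK$, the compact operators on $L^2(M_0,\mu_\cV)$; this is the ideal of true compacts. The restriction exact sequence for the invariant decomposition $M = M_0\sqcup\cZ$ gives $C^*(\cG)/\cK\cong C^*(\cG|_\cZ)$, hence Fredholmness of $PR^{-1}$ is invertibility of its class in $\cA/\cK$. I would then stratify $\cZ$ into $\cG$-orbits $\cO_\alpha$, $\alpha\in I$, and attach to each the regular representation $\pi_\alpha$ on the $L^2$-space of a source fibre $M_\alpha$; by $\cG$-invariance its range consists of operators invariant under the isotropy group $G_\alpha$ acting freely and properly on $M_\alpha$, and these are exactly the limit operators $P_\alpha=\pi_\alpha(P)$. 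The concluding step is the exhaustive-families criterion of Nistor--Prudhon: the family $\{\sigma_0\}\cup\{\pi_\alpha\}_{\alpha\in I}$ is exhaustive for $\cA/\cK$, so an element there is invertible if and only if it is invertible under $\sigma_0$ (ellipticity) and under every $\pi_\alpha$ (invertibility of each $P_\alpha$), which is the asserted equivalence~\cite{nistor_fred}.

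The hard part will be establishing that this family of representations is genuinely exhaustive and, correspondingly, that invertibility of each individual $P_\alpha$ (as opposed to mere injectivity or surjectivity with norms of inverses possibly blowing up along $I$) really forces invertibility in the quotient. This demands a description of the primitive ideal spectrum of $C^*(\cG|_\cZ)$ together with an amenability (or metric amenability) hypothesis on $\cG$ ensuring that the full and reduced groupoid $C^*$-algebras coincide, so that the abstract Fredholm-by-representations theorem delivers the needed uniform bounds. In the presence of tangency points the isotropy groups $G_\alpha$ become genuinely nonabelian and the orbit structure of $\cZ$ is more intricate than in the cylindrical- or conical-end models, so verifying exhaustiveness and the properness of the $G_\alpha$-actions is exactly the place where the elementary translation-invariant arguments must be superseded by the full groupoid machinery.
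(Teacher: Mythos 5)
Your proposal follows essentially the same route as the paper's derivation (which itself rests on the cited work \cite{nistor_fred}): integrate the compatible structure to a Lie groupoid, reduce to order zero inside the $C^*$-completion of $\Psi^0(\cG)$ via powers of $1+\Delta$, use the symbol sequence so that ellipticity yields invertibility modulo $C^*_r(\cG)$, identify the genuine compacts with the $C^*$-algebra of the pair groupoid over the dense orbit, realize the limit operators as regular representations at boundary points, and conclude by the exhaustive-families (Exel's property) criterion. The difficulties you flag at the end --- exhaustiveness, metric amenability, and the identification of full and reduced algebras --- are precisely the ingredients the paper also delegates to \cite{nistor_fred}, so your sketch is faithful to the paper's treatment.
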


The statement will be made more precise after all the relevant notions are introduced. We will see that a similar statement holds for left semi-Fredholm operators. Namely if $P$ is elliptic and all $P_\alpha$ are \textit{left invertible}, then $P$ is \textit{left semi-Fredholm}. It should be noted that there exists extensive literature on Fredholm conditions in a great variety of situations (see, for example, \cite{georgescu1,georgescu2,mantoiu1,mantoiu2,mantoiu3}).

In the case of almost-Riemannian manifolds $I$ is the singular set $\cZ$ and $M_\alpha = G_\alpha$. Thus $P_\alpha$ will be some right invariant operators and the full power of non-commutative harmonic analysis can be exploited. However in the low-dimensional case this is not strictly speaking necessary. Indeed, it is known that there are only two non-isomorphic simply connected Lie groups in dimension two: the Euclidean space and the group of affine transformations of the real line. We will see that $G_q$ is Euclidean if, and only if, $q$ is a tangency point. This means surprisingly that it is much easier to verify the left invertibility of a  limit operator at a tangency point than at a Grushin point. Indeed, the main work will consists of dealing with Grushin points.
It is worth emphasising again that using the techniques explained in the article one can also obtain some information about the closure of the non-perturbed operator as well. Theorem~\ref{thm:main} is stated mainly to show that various singularities can be treated in a unified manner. Further generalisations can be proved similarly. Besides this, it seems there are not many results on the Laplace-Beltrami operator in the presence of tangency points. At least the author is not aware of any result in this direction, though there are several works which study structures without tangency points~\cite{grushin,luca1,luca2,luca3,EU,alessandro} including a work by the author, Ugo Boscain and Eugenio Pozzoli~\cite{me}. Indeed the main motivation for this work came from the attempt to find self-adjoint extensions of the curvature Laplacian on Grushin manifolds similarly to what was done in~\cite{EU,alessandro} and explore their unusual quantum mechanical behaviour. If in the construction of the Laplace-Beltrami operator instead of the Riemannian volume we take any smooth volume, then there is no singularity present (except the degeneracy of the principal symbol) and such operators can be handled using essentially the theory of H\"ormander operators even though the singularity still manifests itself in various forms~\cite{trelat_heat,luca4}. We should also mention separately article~\cite{heat_tommaso} where some results for the heat content on domains with characteristic points were obtained. Finally, we note that the use of Lie groupoids in sub-Riemannian geometry is not new. Indeed, they were used in articles~\cite{vanerp2,ponge,dave} where authors mainly focus on equiregular structures and are aimed towards index theory on filtered manifolds. This theory was later extended to general Kolmogorov type operators in~\cite{yuncken2}.

We end this rather lengthy introduction by explaining the structure of the paper. This paper is written for two separate communities. On one hand for the sub-Riemannian community, on the other for people working on analysis and $C^*$-algebras on Lie groupoids. For this reason considerable part of the article is an explanation of basic notions both from almost-Riemannian geometry and Lie theory with many pictures, so that both communities can understand the idea behind definitions and methods used in the paper. In Section~\ref{sec:ar_geom} basic notions from sub-Riemannian and almost-Riemannian geometry are given. In Section~\ref{sec:lie} the basics of Lie groupoids, Lie algebroids and Lie manifolds with relevant examples are discussed. In Section~\ref{sec:anal} the compatible PDO calculus and Sobolev spaces are defined. In Section~\ref{sec:cnq} generalised Carvalho-Nistor-Qiao conditions are stated and an idea of the proof is given. In Section~\ref{sec:1D} we give a simple application of the Carvalho-Nistor-Qiao conditions to the study of a model 1D example that will be useful in the study of limit operators at Grushin points. Section~\ref{sec:ar_anal} is entirely dedicated to the proof of Theorem~\ref{thm:main}. The goal of this article is to show that it is possible to study different singularities in sub-Riemannian geometry via a unique single theory. For this reason we do not strive for the most general results, nevertheless in the final Section~\ref{sec:final} we discuss the applicability of this method to other structures. The reader familiar with Lie groupoids and their representations can simply skim Sections~\ref{sec:lie}-\ref{sec:anal} to become familiar with the notations of the article and go directly to the proof of Theorem~\ref{thm:main}. For people familiar with sub-Riemannian geometry a short summary on the limit operators and how to write them explicitly is given in the beginning of Section~\ref{sec:1D}, where we consider a relevant 1D example. Sections~\ref{sec:lie}-\ref{sec:anal} essentially collect all the definitions and necessary for this paper results which are scattered over the literature. The author hope that both communities will find the results interesting either in terms of techniques, or examples and possible applications.

\section{Almost-Riemannian geometry}

\label{sec:ar_geom}

Let us recall the definition of a sub-Riemannian manifold from~\cite{abb}. Let $\cF$ be a set of vector fields on a manifold $M$. The Lie algebra generated by $\cF$ is defined as the smallest Lie subalgebra of the Lie algebra of vector fields on $M$ containing all the commutators of $\cF$:
$$
\Lie \cF = \spann\{[X_1,\dots,[X_{j-1},X_j]], X_i\in \Gamma(\cF), j\in \N\}.
$$ 
We denote by 
$$
\Lie_q \cF = \spann\{X(q)\, : \, X \in \cF\}
$$ 
and say that $\cF$ satisfies the \textit{H\"ormander condition} or is \textit{bracket-generating} if
$$
\Lie_q \cD = \{X(q):X \in \Lie \cD \} = T_q M, \qquad \forall \in M.
$$

\begin{definition}
\label{def:sub}
Let $M$ be a connected smooth manifold. A \textit{sub-Riemannian structure} on $M$ is a pair $(\U,f)$, where 
\begin{enumerate}
\item $\pi_\U: \U\to M$ is a Euclidean bundle;
\item $f:\U \to TM$ is a fiber-wise linear smooth morphism of bundles. In particular the following diagram is commutative
$$
\begin{tikzcd}
\U \arrow[dr, "\pi_\U" ']\arrow[r, "f"] & TM \arrow[d, "\pi"] \\& M
\end{tikzcd}
$$
\item The family of vector fields $\cD = f(\Gamma(\U))$ satisfies the H\"ormander condition.
\end{enumerate}
\end{definition}

The family $\cD$ is called the \textit{distribution} of the sub-Riemannian structure $(\U,f)$. Denote $\cD_q = f(\U_q)$. A Lipschitz curve $\gamma: [0,1]\to M$ is called admissible if for almost every $t\in [0,1]$ the velocity vector $\dot{\gamma}(t) \in D_{\gamma(t)}$.

\begin{theorem}[Rashevsky-Chow]
If a sub-Riemannian manifold $(M,\U,f) $ satisfies the H\"ormander condition, then any two points of $M$ can be connected by an admissible curve.
\end{theorem}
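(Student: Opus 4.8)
The plan is to prove this classical Rashevsky--Chow statement by showing that the set of points attainable from a fixed $q$ by admissible curves is open, and then invoking the connectedness of $M$.

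First I would pass to a local frame. Around any point the Euclidean bundle $\U$ trivialises, so on a neighbourhood the distribution is generated by finitely many vector fields $X_1,\dots,X_m = f(\sigma_1),\dots,f(\sigma_m)$, where $\sigma_1,\dots,\sigma_m$ is a local frame of $\U$. Since $\cD = f(\Gamma(\U))$ is a $C^\infty(M)$-module it is symmetric, that is, $X\in\cD$ implies $-X\in\cD$; this is exactly what lets admissible curves be run backwards, since $t\mapsto e^{-tX_i}(p)$ has velocity $-X_i\in\cD$. For $q\in M$ write $\cA_q$ for the \emph{attainable set}, the collection of endpoints $\gamma(1)$ of admissible curves $\gamma$ with $\gamma(0)=q$.

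The core of the argument is to show that \emph{$\cA_q$ contains a neighbourhood of $q$}. The point is that one may move not only along the generators $X_i$ but also, to leading order, along their iterated Lie brackets. Concretely, for two generators the commutator of their flows satisfies
$$
e^{-tX_j}\circ e^{-tX_i}\circ e^{tX_j}\circ e^{tX_i}(q) = q + t^2[X_i,X_j](q) + O(t^3),
$$
and suitable higher-order compositions reproduce iterated brackets. By the H\"ormander condition $\Lie_q\cD = T_qM$, so one can select iterated brackets $Y_1,\dots,Y_n$ of the $X_i$ with $Y_1(q),\dots,Y_n(q)$ a basis of $T_qM$. Assembling the corresponding time-rescaled commutator flows into a single map $\Phi$ sending $(s_1,\dots,s_n)$, near $0\in\R^n$, to the composition of these flows applied to $q$, I would check that $d\Phi_0$ is invertible and conclude, by the inverse function theorem, that $\Phi$ maps a neighbourhood of $0$ diffeomorphically onto an open neighbourhood of $q$ contained in $\cA_q$. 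I regard this as the main obstacle: the genuine work is the time-rescaling that makes the bracket directions appear at first order, which is precisely the content of the Orbit Theorem of Sussmann and Stefan (see~\cite{abb}); its conclusion is that the orbit of $q$ is an immersed submanifold whose tangent space at $q$ equals $\Lie_q\cD = T_qM$, hence the orbit is open.

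Finally I would close the argument topologically. Declare $p\sim q$ if $p\in\cA_q$. Reflexivity is the constant curve, symmetry follows from the symmetry of $\cD$ (reverse the curve), and transitivity follows by concatenating admissible curves. Thus $\sim$ is an equivalence relation whose classes are the attainable sets, which are open by the previous step. A partition of $M$ into disjoint nonempty open sets is trivial when $M$ is connected, so there is a single class and $\cA_q = M$ for every $q$, which is the assertion. Everything outside the openness step is formal.
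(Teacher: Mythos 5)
The paper does not actually prove this statement: Rashevsky--Chow is quoted as classical background (the general theory being deferred to~\cite{abb}), so your proposal can only be compared with the standard literature, not with an internal argument. Your overall architecture is the classical one and is sound: symmetry of the distribution (each $\cD_q=f(\U_q)$ is a linear subspace, so admissible curves can be reversed), the equivalence relation $p\sim q \iff p\in\cA_q$ whose classes are the attainable sets, openness of each class, and connectedness of $M$ to conclude there is a single class.

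The one point needing care is the openness step, which you yourself flag as the main obstacle. As literally written, the commutator-of-flows argument does not go through: after the time rescaling that makes $[X_i,X_j]$ appear at first order (essentially $t=\sqrt{|s|}$, with the order of the four flows reversed for $s<0$), the assembled map $\Phi$ is continuous but not $C^1$ in any neighbourhood of the origin, so ``check that $d\Phi_0$ is invertible and apply the inverse function theorem'' is not an available move; rigorous treatments must either replace the inverse function theorem by an invariance-of-domain or degree argument, or avoid the commutator construction altogether. Your fallback -- invoking the Orbit Theorem of Sussmann and Stefan, which under the H\"ormander condition gives that the orbit through any point (a set of endpoints of concatenations of flow lines of fields in $\cD$, hence of admissible curves, by symmetry) is open -- does close this gap, so the proof is correct with the Orbit Theorem as the key external input. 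Two remarks on attribution: the Orbit Theorem is proved by a different mechanism (invariance of a suitable distribution under the generated flows), not by the time-rescaled commutator computation, so the latter should be presented as motivation only; and the proof of Chow--Rashevskii given in~\cite{abb} itself follows yet another route, producing by induction on dimension a composition of flows of fields in $\cD$ that is a local diffeomorphism at some parameter value arbitrarily close to $0$, and then returning to $q$ by composing with the reversed flows, which exhibits $q$ as an interior point of $\cA_q$ without ever differentiating at the singular parameter.
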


We can measure the lengths of admissible curves. Indeed, the sub-Riemannian norm of  a vector $v\in \cD_q$ is defined as 
$$
\|v\| = \min \{|u|\, : \, u\in \U_q,\, v= f(q,u) \}.
$$
The the length of an admissible curve $\gamma: [0,1]\to M$ is defined as
$$
l(\gamma) = \int_0^1 \|\dot{\gamma}\| dt.
$$
H\"ormander condition under some additional completeness assumptions guarantees that any two points can be connected by a minimiser. In this case a sub-Riemannian manifold becomes a well-defined metric space.

We define the flag $\cD^1_q \subset \cD^2_q \subset \dots \subset T_q M$ of a sub-Riemannian manifold at a point $q$ recursively as
$$
\cD^1_q = \cD_q, \qquad \cD^{k+1} = \cD^k_q + [\cD^{k},\cD]_q.
$$
H\"ormander condition equivalently says that for each point $q\in M$ there exists a number $k(q)\in \N$ such that $\cD^{k(q)}_q = T_q M$.

\begin{definition}
An \textit{almost-Riemannian manifold} (AR) is a sub-Riemannian manifold where $\dim \cD_q = \dim M$ or equivalently $k(q) = 1$ on an open dense set of points $q\in M$. Points where $\dim \cD_q < \dim M$ is called the singular set and we denote it as $\cZ$.
\end{definition}

From here on we will consider 2D compact almost-Riemannian manifolds without boundary satisfying the following genericity assumption (H0) from \cite[ Proposition 2]{ar2}:

\begin{enumerate}
\item $\cZ$ is an embedded one-dimensional smooth submanifold of $M$;
\item The points $q\in M$ at which $\cD^2_q$ is one-dimensional are isolated;
\item $\cD^3_q = T_q M$ for all $q\in M$.
\end{enumerate}

Locally we can take two orthonormal sections $\sigma_1,\sigma_2 \in \Gamma(\U)$ and construct two vector fields $X_i = f(\sigma_i)$ which generate our distribution in a neighbourhood of a given point $q\in M$. Note that $X_1$ and $X_2$ can not vanish at the same point as it would violate the H\"ormander condition. Thus for every point $q\in M$ there exists an open neighbourhood $O_q$ and a non-vanishing vector field in $O_q$ which we call again $X_1$. By rectifying this vector field we can find local coordinates $(x,y)$ on $O_q$ centred at $q$ such that
\begin{equation}
\label{eq:normal}
X_1 = \p_x, \qquad X_2 = f(x,y)\p_y,
\end{equation}
for some smooth function $f$. In particular $\cZ \cap O_q$ coincides with the zero locus of $f$. Under the genericity assumption $df|_{\cZ} \neq 0$. In this local frame we can write down
\begin{enumerate}
\item an expression for the metric
$$
g = \begin{pmatrix}
1 & 0 \\
0 & \frac{1}{f(x,y)^2};
\end{pmatrix}
$$
\item an expression for the associated canonical volume form:
$$
\omega = \frac{dx \wedge dy}{|f|};
$$
\item an expression for the Laplace-Beltrami operator:
\begin{equation}
\label{eq:Laplace}
\Delta = \p_x^2 + f^2 \p_y^2 -\frac{\p_x f}{f}\p_x + f(\p_y f) \p_y.
\end{equation}
\end{enumerate}
Note that all of those quantities explode at the zero locus of $f$.

We have already seen that under the genericity assumption there are three types of points that can occur. One can construct normal forms for each type which improve~\eqref{eq:normal}.
\begin{theorem}[\cite{ar1}]
\label{thm:normal_forms}
Let $q$ be a point of a generic 2D almost-Riemannian manifold. Then there exists local coordinates $(x,y)$ such that $\cD$ is locally generated by two vector fields $X_1$, $X_2$ of one of the following normal forms:
\begin{enumerate}
\item Riemannian points: 
\begin{equation}
X_1 = \p_x, \qquad X_2 = e^{\phi(x,y)}\p_y;
\end{equation}
\item Grushin points: 
\begin{equation}
\label{eq:grushin}
X_1 = \p_x, \qquad X_2 = x e^{\phi(x,y)}\p_y;
\end{equation}
\item Tangency points:
\begin{equation}
\label{eq:tangency}
X_1 = \p_x, \qquad X_2 = (y-x^2\psi(x))e^{\Psi(x,y)}\p_y;
\end{equation}
\end{enumerate}
where $\phi,\psi,\Psi$ are smooth functions, $\Psi(0,y) = 0$, $\psi(0) \neq 0$.
\end{theorem}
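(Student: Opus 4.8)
The plan is to classify the germ at $q$ of the function $f$ appearing in the semigeodesic form \eqref{eq:normal}, working modulo the transformations that preserve the almost-Riemannian structure: reflections of the orthonormal frame ($f\mapsto -f$, $x\mapsto -x$), reparametrisations $y\mapsto\beta(y)$ (which replace $f$ by $\beta'(y)f$), and, most importantly, the freedom in the choice of semigeodesic coordinates, i.e. of the transversal curve from which the unit orthogonal geodesics are issued. Recall that \eqref{eq:normal} forces $g = dx^2 + f^{-2}dy^2$, so that the $x$-lines are unit-speed geodesics orthogonal to $\{y=\mathrm{const}\}$ and $\cZ = \{f=0\}$. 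The three cases are separated by the jet of $f$ at $q$: a Riemannian point has $f(q)\neq 0$; since $[X_1,X_2]=(\p_x f)\p_y$, a Grushin point has $f(q)=0$, $\p_x f(q)\neq 0$; and since the next bracket is $(\p_x^2 f)\p_y$, a tangency point has $f(q)=\p_x f(q)=0$, $\p_x^2 f(q)\neq 0$, where the genericity hypothesis $df|_{\cZ}\neq 0$ forces $\p_y f(q)\neq 0$. The Riemannian case is then immediate: $f$ is non-vanishing near $q$, so after the reflection $f\mapsto -f$ we may assume $f>0$ and set $\phi=\log f$.

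For a Grushin point I would rebuild the semigeodesic coordinates taking the curve $\cZ$ itself as the transversal from which the orthogonal geodesics are issued. This is legitimate precisely because $\cD_q=\spann\{\p_x\}$ is transversal to $\cZ$ and, tangency points being isolated, a whole arc of $\cZ$ through $q$ is of Grushin type; the orthogonal geodesics therefore cross $\cZ$ transversally and give a smooth chart with $\cZ=\{x=0\}$ and $g=dx^2+f^{-2}dy^2$. As $f$ vanishes to first order on $\{x=0\}$, Hadamard's lemma yields $f = x\,u(x,y)$ with $u$ non-vanishing, and absorbing $\sign(u)$ into the frame gives $f=xe^{\phi}$. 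The one genuinely analytic point here is the smoothness of this Fermi-type chart up to and across the singular set, which transversality supplies.

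For a tangency point the orthogonal geodesics are tangent to $\cZ$ at $q$, so $\cZ$ has no tubular neighbourhood compatible with the metric and the previous construction is unavailable; this is the heart of the matter. Instead I keep coordinates based at a curve transversal to $\cZ$ and read off $\cZ$ from the $2$-jet: since $\p_y f(q)\neq 0$, the implicit function theorem solves $f=0$ as $y=\eta(x)$, and $\eta'(0)=-\p_x f/\p_y f|_q=0$ together with $\eta''(0)=-\p_x^2 f(q)/\p_y f(q)\neq 0$ give $\eta(x)=x^2\psi(x)$ with $\psi(0)\neq 0$. Hadamard's lemma then factors $f=(y-x^2\psi(x))\,v(x,y)$ with $v(q)=\p_y f(q)\neq 0$, i.e. $f=(y-x^2\psi(x))e^{\tilde\Psi}$ after absorbing the sign.

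It remains to normalise $\tilde\Psi$ so that $\Psi(0,y)=0$, equivalently $f(0,y)=y$ along the cross-section $\{x=0\}$, and I expect this to be the main obstacle. A naive reparametrisation $y\mapsto\beta(y)$ alone cannot achieve it: matching $f(0,y)=y\,v(0,y)$ to $y$ would force $\beta$ to behave like $y^{1/v(q)}$ near the origin, hence to be non-smooth whenever $v(q)\neq 1$. One must therefore use the full remaining freedom, simultaneously bending and reparametrising the transversal cross-section so as to arrange $v\equiv 1$ on $\{x=0\}$, while checking that $\cZ$ stays in the parabolic form $y=x^2\psi(x)$ (with $\psi$ merely rescaled) and that the semigeodesic form $g=dx^2+f^{-2}dy^2$ is preserved. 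Carrying out this last step carefully is exactly where the tangency singularity, and the absence of a compatible tube around $\cZ$, makes the argument delicate.
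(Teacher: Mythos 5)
The paper does not actually prove this theorem; it imports it from~\cite{ar1}, so your proposal has to stand on its own. Your Riemannian and Grushin cases are correct and follow the standard route (Fermi coordinates based on an arbitrary transversal, respectively on $\cZ$ itself, plus Hadamard's lemma). The gap you flag at the tangency point, however, is not just an omitted computation: the strategy you propose for closing it --- ``simultaneously bending and reparametrising the transversal cross-section'' --- cannot work, because the quantity $v(q)=\p_y f(q)$ is invariant under \emph{all} such changes. Read the normal form with $(X_1,X_2)$ orthonormal (the only reading under which the normalisation $\Psi(0,y)=0$ has any content; as mere module generators the statement is vacuous). If $(x,y,f)$ and $(t,s,\tilde f)$ are two orthonormal representations near a tangency point $q$, write the curve $\{t=0\}$ in the $(x,y)$-coordinates as $w(s)=(a(s),b(s))$; it is automatically transversal to $\cD_q=\spann\{\p_x\}$, so $b'(0)\neq 0$. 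Computing the almost-Riemannian norm of $\p_s=\dot w(s)$ in the two frames gives, for small $s\neq 0$,
\[
\frac{1}{\tilde f(0,s)^2}\;=\;\|\dot w(s)\|^2\;=\;a'(s)^2+\frac{b'(s)^2}{f(w(s))^2}.
\]
Since $\p_x f(q)=0$ at a tangency point, $f(w(s))=\p_y f(q)\,b'(0)\,s+O(s^2)$, and letting $s\to 0$ yields $|\p_s\tilde f(0,0)|=|\p_y f(q)|$. Thus $|\p_y f(q)|$ is an invariant of the structure at a tangency point, whereas the form~\eqref{eq:tangency} with $\Psi(0,y)=0$ forces this invariant to equal $e^{\Psi(0,0)}=1$.

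Concretely, $X_1=\p_x$, $X_2=(2y-x^2)\p_y$ satisfies the genericity assumptions and has a tangency point at the origin with $|\p_y f(q)|=2$, so every orthonormal normal form of this structure has $\Psi(0,0)=\log 2$; no choice of cross-section or parametrisation can produce $\Psi(0,y)=0$. The breakdown $\beta\sim y^{1/v(q)}$ that you observed is therefore not the failure of a naive attempt but the shadow of a genuine obstruction, and your plan to remove $v(q)$ by changing the cross-section contradicts the invariance above. What \emph{is} achievable is $\Psi(0,y)\equiv\Psi(0,0)$: writing $\beta(\tilde y)=\tilde y\,u(\tilde y)$ turns the singular reparametrisation equation $\tilde y\,\beta'=\beta\,e^{\Psi(0,\beta)-\Psi(0,0)}$ into a regular ODE for $u$, because the resonant exponent is exactly $1$. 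By contrast, at Grushin points the analogous normalisation $\phi(0,y)=0$ \emph{is} attainable, since there the relevant equation $\beta'=\p_x f(0,\beta)$ is regular ($\p_x f\neq 0$ on $\cZ$); that is where such a condition rightfully belongs, and the statement as transcribed here, attaching it to the tangency form, is too strong. Note that nothing later in the paper uses $\Psi(0,y)=0$ --- only $f(q)=\p_xf(q)=0$ at tangency points --- so a correct proof should target the weaker, achievable normalisation.
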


We will mostly work with the general form~\eqref{eq:normal} and use Theorem~\ref{thm:normal_forms} only to compute the limit operators. Next we recall all the necessary theory that would allows us to prove semi-Fredholm properties of the Laplace-Beltrami operator $\Delta$ and compute explicitly the closure of the perturbed operator.

\section{Lie groupoids, Lie algebroids and Lie manifolds}

\label{sec:lie}

\subsection{Definitions and examples}

\label{subsec:lie_def}
Lie groupoids were introduced by Charles Ehresmann and nowadays they represent an indispensable tool in many fields of mathematics like Poisson geometry~\cite{weinstein} and foliation theory~\cite{folliations}. In analysis Lie groupoids are often used in global geometry in studying various geometric objects via differential operators associated naturally to them. Probably the most famous application of this kind is given by the Connes' proof of the index theorem~\cite[Chapter 2, Section 10]{connes}. As many geometric objects are singular, Lie groupoids provide a language that allows to treat regular and singular objects in a unified manner. In the following exposition we follow closely papers~\cite{nistor_desing,nistor_fred,nistor_pdo}.

A particular type of singularity that we will be interested in are corners. A \textit{manifold $M$ with corners} is second countable, Hausdorff topological space locally modelled by open subsets of $[-1,1]^n$. We will abbreviate their names just to \textit{manifolds}. If a manifold actually does not have a boundary, we will call it a \textit{smooth manifold}. Every point $q\in M$ has an inward pointing tangent cone $T^+_q M$. In the interior of $M$ we have $T^+_q M = T_q M$. A \textit{tame submersion} $h: M_1 \to M$ is a smooth map such that its differential is surjective and $dh(v)$ is inward pointing if and only if $v \in T^+_q M_1$. Given a tame submersion each preimage $h^{-1}(q)$, $q
\in M$ is actually a smooth manifold. We will use $M_0$ to denote the interior of $M$.

We can now define Lie groupoids. Intuitively one can think of them in many ways, but probably the most useful is to look at them as manifolds with a compatible ``partial multiplication" which means that we can not multiply every element by every other element. This multiplication function must satisfy certain compatibility conditions that are listed in the following formal definition.

\begin{definition}
A \textit{Lie groupoid} $\cG \rightrightarrows M$ is a pair of manifolds $(\cG,M)$ with the following structure maps:
\begin{itemize}
\item an injection $u: M \to \cG$ called the \textit{unit map}. The space $u(M) \subset \cG$ which we identify with $M$ is called the space of units;
\item a pair of tame submersions $d,r: \cG \to M$ called the \textit{domain} (or source) and \textit{range} (or target) maps such that $d\circ u = \id$, $r\circ u = \id$;
\item a \textit{multiplication map} $m: \cG^{(2)} \to \cG$ from the set of composable pairs: 
$$
\cG^{(2)} = \{(g',g)\in \cG\times \cG \, : \, r(g)=d(g')\},
$$
and analogously to the theory of Lie groups we shorten $m(g,g')$ to $gg'$. The multiplication map satisfies:
$$
d(g'g)=d(g), \qquad r(g'g)=r(g'),
$$ 
$$
(g''g')g=g''(g'g), \qquad g d(g) = g, \qquad r(g) g = g;
$$
\item an \textit{inversion map} $i: \cG \to \cG$, $i: g\mapsto g^{-1}$ such that 
$$
gg^{-1} = r(g), \qquad g^{-1}g = d(g). 
$$
\end{itemize}
\end{definition}

\begin{remark}
Some modifications are possible. For example, $\cG$ often is not assumed to be Hausdorff even though $M$ is always assumed to be Hausdorff. In this paper we will only work with Hausdorff Lie groupoids.
\end{remark}

\begin{example}
\label{ex:lie}
Let $G$ be a Lie group. We can consider it as a Lie groupoid $G\rightrightarrows \{\id_G\}$ with multiplication given by the usual Lie group multiplication. The groupoid definition of a Lie group emphasises more its algebraic structure than its geometric structures.
\end{example}

\begin{example}
\label{ex:man}
Let $M$ be a manifold. Then we can consider a Lie groupoid $M\rightrightarrows M$, where $r(q)=d(q) =q $ for all $q\in M$ and hence all multiplications are just multiplications by units.
\end{example}

For this reason one sometimes says that Lie groupoids interpolate between manifolds and Lie groups. Let us see some other examples which lie in between.

\begin{example}
Let $G$ be a Lie group acting on a manifold $M$. It is known that the quotient $M\backslash G$ is not always a manifold. For this reason it is often more natural to consider the action groupoid $G\ltimes M \rightrightarrows M $ which topologically is just $M \times G$. The domain and range maps are given by $d(g,q) = q$, $r(g,q)=gq$ and the multiplication is $(g',g q),(g,q) = (g'g,q)$. 
\end{example}

\begin{example}
\label{ex:pair}
Another important example is the pair groupoid $M\times M \rightrightarrows M$. The domain and the range are given by $d(q',q)=q$, $r(q',q)=q'$ and the multiplication by $(q'',q'),(q',q)=(q'',q)$. Under a suitable natural notion of isomorphisms of groupoids~\cite{lg} one can show that for a Lie group $G$ the pair groupoid $G \times G$ is isomorphic to the action groupoid $G\ltimes G$. 
\end{example}

The pair gropoid $\R \times \R \rightrightarrows \R$ is a great visual aid for understanding groupoids and their multiplication. The space of units for this groupoid is given by the diagonal, range and domain maps are projections to the diagonal parallel to one of the axis. The multiplication of two elements $g'g = g''$ is depicted in Figure~\ref{fig:rgropoid}.

\begin{center}
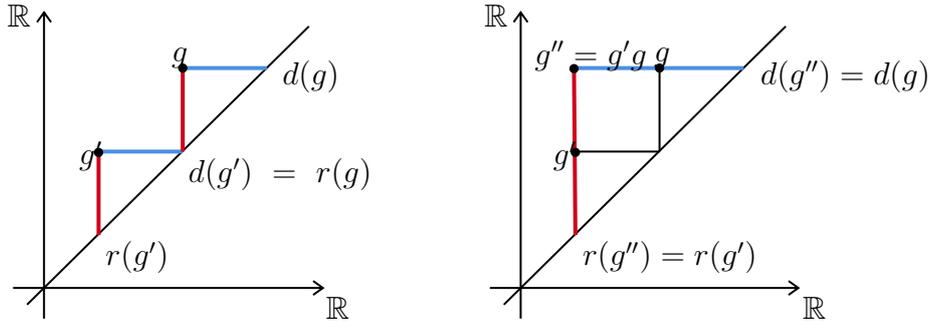
\begin{figure}[h]

\begin{center}

\tikzset{every picture/.style={line width=0.75pt}} %set default line width to 0.75pt        

\begin{tikzpicture}[x=0.75pt,y=0.75pt,yscale=-0.7,xscale=0.7]
%uncomment if require: \path (0,300); %set diagram left start at 0, and has height of 300

%Shape: Axis 2D [id:dp8218274764422902] 
\draw  (60,228) -- (281,228)(82.1,30) -- (82.1,250) (274,223) -- (281,228) -- (274,233) (77.1,37) -- (82.1,30) -- (87.1,37)  ;
%Straight Lines [id:da5162390485257917] 
\draw    (271,40) -- (70,240) ;
%Straight Lines [id:da09066974380793646] 
\draw [color={rgb, 255:red, 208; green, 2; blue, 27 }  ,draw opacity=1 ][line width=1.5]    (121,130) -- (121,190) ;
%Straight Lines [id:da3387089161946286] 
\draw [color={rgb, 255:red, 74; green, 144; blue, 226 }  ,draw opacity=1 ][line width=1.5]    (121,130) -- (181,130) ;
%Straight Lines [id:da6609875035364947] 
\draw [color={rgb, 255:red, 208; green, 2; blue, 27 }  ,draw opacity=1 ][line width=1.5]    (181,70) -- (181,130) ;
%Straight Lines [id:da21049307157200015] 
\draw [color={rgb, 255:red, 74; green, 144; blue, 226 }  ,draw opacity=1 ][line width=1.5]    (181,70) -- (241,70) ;
%Shape: Circle [id:dp3958942936039229] 
\draw  [fill={rgb, 255:red, 0; green, 0; blue, 0 }  ,fill opacity=1 ] (118.4,130.6) .. controls (118.4,129.16) and (119.56,128) .. (121,128) .. controls (122.44,128) and (123.6,129.16) .. (123.6,130.6) .. controls (123.6,132.04) and (122.44,133.2) .. (121,133.2) .. controls (119.56,133.2) and (118.4,132.04) .. (118.4,130.6) -- cycle ;
%Shape: Circle [id:dp5010091567403887] 
\draw  [fill={rgb, 255:red, 0; green, 0; blue, 0 }  ,fill opacity=1 ] (178.4,70) .. controls (178.4,68.56) and (179.56,67.4) .. (181,67.4) .. controls (182.44,67.4) and (183.6,68.56) .. (183.6,70) .. controls (183.6,71.44) and (182.44,72.6) .. (181,72.6) .. controls (179.56,72.6) and (178.4,71.44) .. (178.4,70) -- cycle ;
%Shape: Axis 2D [id:dp6990840739448974] 
\draw  (400,228) -- (620,228)(422,30) -- (422,250) (613,223) -- (620,228) -- (613,233) (417,37) -- (422,30) -- (427,37)  ;
%Straight Lines [id:da9108194916946228] 
\draw    (611,40) -- (410,240) ;
%Straight Lines [id:da6107567285780213] 
\draw [color={rgb, 255:red, 208; green, 2; blue, 27 }  ,draw opacity=1 ][line width=1.5]    (460,70) -- (461,190) ;
%Straight Lines [id:da6082054296322892] 
\draw    (461,130) -- (521,130) ;
%Straight Lines [id:da2571219532899891] 
\draw    (521,70) -- (521,130) ;
%Straight Lines [id:da43059071356170686] 
\draw [color={rgb, 255:red, 74; green, 144; blue, 226 }  ,draw opacity=1 ][line width=1.5]    (460,70) -- (581,70) ;
%Shape: Circle [id:dp27357380552250876] 
\draw  [fill={rgb, 255:red, 0; green, 0; blue, 0 }  ,fill opacity=1 ] (458.4,130.6) .. controls (458.4,129.16) and (459.56,128) .. (461,128) .. controls (462.44,128) and (463.6,129.16) .. (463.6,130.6) .. controls (463.6,132.04) and (462.44,133.2) .. (461,133.2) .. controls (459.56,133.2) and (458.4,132.04) .. (458.4,130.6) -- cycle ;
%Shape: Circle [id:dp6184496536761575] 
\draw  [fill={rgb, 255:red, 0; green, 0; blue, 0 }  ,fill opacity=1 ] (518.4,70) .. controls (518.4,68.56) and (519.56,67.4) .. (521,67.4) .. controls (522.44,67.4) and (523.6,68.56) .. (523.6,70) .. controls (523.6,71.44) and (522.44,72.6) .. (521,72.6) .. controls (519.56,72.6) and (518.4,71.44) .. (518.4,70) -- cycle ;
%Shape: Circle [id:dp13441778799534387] 
\draw  [fill={rgb, 255:red, 0; green, 0; blue, 0 }  ,fill opacity=1 ] (457.4,70.4) .. controls (457.4,68.96) and (458.56,67.8) .. (460,67.8) .. controls (461.44,67.8) and (462.6,68.96) .. (462.6,70.4) .. controls (462.6,71.84) and (461.44,73) .. (460,73) .. controls (458.56,73) and (457.4,71.84) .. (457.4,70.4) -- cycle ;

% Text Node
\draw (54,22.4) node [anchor=north west][inner sep=0.75pt]    {$\mathbb{R}$};
% Text Node
\draw (281,232.4) node [anchor=north west][inner sep=0.75pt]    {$\mathbb{R}$};
% Text Node
\draw (105,120.4) node [anchor=north west][inner sep=0.75pt]    {$g' $};
% Text Node
\draw (171,53) node [anchor=north west][inner sep=0.75pt]    {$g $};
% Text Node
\draw (250,62.4) node [anchor=north west][inner sep=0.75pt]    {$d( g )$};
% Text Node
\draw (183,132.4) node [anchor=north west][inner sep=0.75pt]    {$d( g' ) \ =\ r( g )$};
% Text Node
\draw (124,192.4) node [anchor=north west][inner sep=0.75pt]    {$r( g' )$};
% Text Node
\draw (394,22.4) node [anchor=north west][inner sep=0.75pt]    {$\mathbb{R}$};
% Text Node
\draw (621,232.4) node [anchor=north west][inner sep=0.75pt]    {$\mathbb{R}$};
% Text Node
\draw (443,120.4) node [anchor=north west][inner sep=0.75pt]    {$g' $};
% Text Node
\draw (515,53) node [anchor=north west][inner sep=0.75pt]    {$g $};
% Text Node
\draw (591,62.4) node [anchor=north west][inner sep=0.75pt]    {$d( g'' ) =d( g )$};
% Text Node
\draw (464,192.4) node [anchor=north west][inner sep=0.75pt]    {$r( g'' ) =r( g' )$};
% Text Node
\draw (430,48.4) node [anchor=north west][inner sep=0.75pt]    {$g'' =g'g $};

\end{tikzpicture}

\end{center}

\caption{Range, domain and multiplication in the pair groupoid $\R\times \R \rightrightarrows \R$. \label{fig:rgropoid}}

\end{figure}
\end{center}

\vspace{-10pt}
Given two subsets $U,V\subset M$ we define the following subsets of $\cG$: $\cG_U = d^{-1}(U)$, $\cG^V = r^{-1}(V)$, $\cG^V_U = d^{-1}(U)\cap r^{-1}(V)$. We call $\cG_U^U$ the \textit{reduction} of $\cG$ to $U$. If both $U$ and $\cG_U^U$ are manifolds, then $\cG_U^U$ is called the \textit{reduced groupoid} or \textit{reduction} to $U$. In order to simplify notations we will denote the reduced groupoid $\cG^U_U$ by $\cG|_U$. As a special case to every point $q\in M$ we can associate a Lie group $G_q = \cG|_q = d^{-1}(q) \cap r^{-1}(q)$ known as the \textit{isotropy group} of $q$. If $U$ is $\cG$-invariant, meaning that $\cG^U_U = \cG^U = \cG_U$, then the reduction $\cG_U$ is a Lie groupoid called the \textit{restriction} of $\cG$ to $U$. Reduced groupoids will play an important role in the construction of Lie groupoids, while restrictions and isotropy groups will be used for defining the limit operators.

Next we pass to the definition of a Lie algebroid.

\begin{definition}
A \textit{Lie algebroid} is a triple $(A,[\cdot,\cdot],\rho)$ consisting of a vector bundle $A$ endowed with a Lie bracket $[\cdot,\cdot]$ and a morphism of vector bundles $\rho: A \to TM$ called the anchor map which satisfies the following identities:
\begin{itemize}
\item The Leibnitz rule: $[X,fY]= (\rho(X)f) Y + f[X,Y]$, where $X,Y \in \Gamma(A)$ and $f\in C^\infty(M)$.
\item Lie algebra homorphism: $\rho([X,Y])=[\rho(X),\rho(Y)]$.
\end{itemize}
\end{definition}

Similarly to Lie groups we can associate the right multiplication map $R_g: \cG_{r(g)} \to \cG_{d(g)}$ as
$$
R_g : h \mapsto hg.
$$
and it establishes a diffeormophism between $\cG_{r(g)}$ and $\cG_{d(g)}$. This allows us to push vectors between the corresponding tangent bundles and define the notion of right invariant vector fields as sections of $\bigcup_{q\in M}T\cG_q$ which are invariant under right multiplication. Define the vector bundle $A(\cG)=\bigcup_{q\in M}T_q\cG_q = \bigcup_{q\in M}T_q \left(d^{-1}(q)\right)$. Note that there is one-to-one correspondence between right invariant vector fields and section of $A(\cG)$ exactly in the same manner as there is a one-to-one correspondence between right invariant vector fields on a Lie group and vectors from a Lie algebra. Space $A(\cG)$ is called the \textit{Lie algebroid of the Lie groupoid} $\cG\rightrightarrows M$. The Lie bracket on $A(\cG)$ is the restriction of the Lie bracket between right invariant vector fields to the space of units. The anchor map is given by $\rho = r_*|_{A(\cG)}$. The image of $\Gamma(A(\cG))$ under the anchor map we denote by $Lie(\cG)$. Finally note that equivalently $\cA(\cG)$ is the restriction to the space of units of the vector bundle $\ker d_*$, where $d_* : T\cG \to TM$ is the differential of $d$. 

\begin{example}
Let $G\rightrightarrows \{\id_G\}$ be a Lie group viewed as a Lie groupoid. Then $d^{-1}(\id_G) = G$, $A(\cG)= T_{\id_G}G =\fg$. The Lie bracket is the usual Lie algebra bracket. The anchor map then maps $T_{\id_G}G$ to $0$ as it should, since $T(\id_G)=\{0\}$. Hence $Lie(\cG)=\{0\}$.  
\end{example}

\begin{example}
Let $M\rightrightarrows M$ be a manifold viewed as a Lie groupoid. In this case $d^{-1}(q) = q$. Hence $A(\cG) = M$. The anchor map $\rho: M \to TM$ is the embedding of $M$ as the zero section.
\end{example}

\begin{example}
Let $G \ltimes M \rightrightarrows M $ be an action groupoid. We have that $d^{-1}(q) \simeq G$. And the set of right invariant vector fields coincides with vector fields $\tilde{X}(g,q)= X(g) \oplus 0$, where $X(g)$ is a right invariant vector field on $G$. Hence Lie bracket on $A(\cG)$ coincides with the point-wise Lie bracket. Elements of the Lie algebra $\fg$ can be identified with the constant sections of $A(\cG)$. Under the anchor map they are mapped to infinitesimal actions of the Lie algebra $\fg$ on $M$. The Lie bracket for general sections of $A(\cG)$ can be found in~\cite[Example 3.5.14]{mackenzie}.

\end{example}

\begin{example}
Consider the pair groupoid $M\times M \rightrightarrows M$. We have $d^{-1}(q) = M\times \{q\}$. Hence $A(\cG) = \cup_{q\in M} \left(T_q M \times  \{q\}\right) = TM$. One can identify the section of $A(\cG)$, which are vector fields $X\in\Gamma(TM)$, with right invariant vector fields $\tilde X(q',q)=  0\oplus X(q)$ and inherit a Lie bracket that is just given by the usual Lie bracket of vector fields on $TM$. Hence the anchor map is just the identity.
\end{example}

Once again the pair groupoid $\R \times \R \rightrightarrows \R$ provides a great visual add for understanding $A(\cG)$ and right invariant vector fields. In Figure~\ref{fig:right} the process of translating a vector from $A(\cG)$ inside $T\cG$ is considered. In this particular case it is simply given by parallel transport of the vector along the range fibers. Thus all right invariant vector fields are vector fields constant on the vertical line. 

\begin{center}
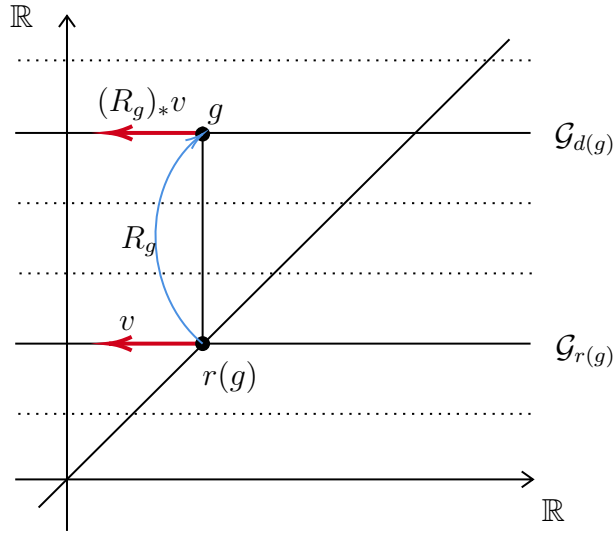
\begin{figure}

\begin{center}
\tikzset{every picture/.style={line width=0.75pt}} %set default line width to 0.75pt        

\begin{tikzpicture}[x=0.75pt,y=0.75pt,yscale=-0.8,xscale=0.8]
%uncomment if require: \path (0,441); %set diagram left start at 0, and has height of 441

%Shape: Axis 2D [id:dp3809870633514576] 
\draw  (103,347.62) -- (426.03,347.62)(135.3,56.19) -- (135.3,380) (419.03,342.62) -- (426.03,347.62) -- (419.03,352.62) (130.3,63.19) -- (135.3,56.19) -- (140.3,63.19)  ;
%Straight Lines [id:da38815530110652663] 
\draw    (411.41,70.91) -- (117.61,365.28) ;
%Straight Lines [id:da8204413432941617] 
\draw  [dash pattern={on 0.84pt off 2.51pt}]  (424.56,306.41) -- (103,306.41) ;
%Straight Lines [id:da4083682121069916] 
\draw    (424.56,262.25) -- (103,262.25) ;
%Straight Lines [id:da1305562983886721] 
\draw  [dash pattern={on 0.84pt off 2.51pt}]  (425.3,218.1) -- (103.73,218.1) ;
%Straight Lines [id:da4852150929252166] 
\draw  [dash pattern={on 0.84pt off 2.51pt}]  (424.56,173.94) -- (103,173.94) ;
%Straight Lines [id:da6558666204761898] 
\draw    (424.56,129.78) -- (103,129.78) ;
%Straight Lines [id:da9451367381944515] 
\draw    (219.93,125.74) -- (219.93,131.49) -- (219.93,262.25) ;
%Straight Lines [id:da18436726008572268] 
\draw  [dash pattern={on 0.84pt off 2.51pt}]  (424.56,84.16) -- (103,84.16) ;
%Straight Lines [id:da18307196114441782] 
\draw [color={rgb, 255:red, 208; green, 2; blue, 27 }  ,draw opacity=1 ][line width=1.5]    (219.93,262.25) -- (164.46,262.25) ;
\draw [shift={(161.46,262.25)}, rotate = 360] [color={rgb, 255:red, 208; green, 2; blue, 27 }  ,draw opacity=1 ][line width=1.5]    (14.21,-4.28) .. controls (9.04,-1.82) and (4.3,-0.39) .. (0,0) .. controls (4.3,0.39) and (9.04,1.82) .. (14.21,4.28)   ;
%Straight Lines [id:da3825404455542132] 
\draw [color={rgb, 255:red, 208; green, 2; blue, 27 }  ,draw opacity=1 ][line width=1.5]    (220.3,129.78) -- (164.83,129.78) ;
\draw [shift={(161.83,129.78)}, rotate = 360] [color={rgb, 255:red, 208; green, 2; blue, 27 }  ,draw opacity=1 ][line width=1.5]    (14.21,-4.28) .. controls (9.04,-1.82) and (4.3,-0.39) .. (0,0) .. controls (4.3,0.39) and (9.04,1.82) .. (14.21,4.28)   ;
%Shape: Ellipse [id:dp5363745048276007] 
\draw  [fill={rgb, 255:red, 0; green, 0; blue, 0 }  ,fill opacity=1 ] (215.91,130.49) .. controls (215.91,128.25) and (217.71,126.44) .. (219.93,126.44) .. controls (222.15,126.44) and (223.95,128.25) .. (223.95,130.49) .. controls (223.95,132.72) and (222.15,134.54) .. (219.93,134.54) .. controls (217.71,134.54) and (215.91,132.72) .. (215.91,130.49) -- cycle ;
%Shape: Ellipse [id:dp43090956867692776] 
\draw  [fill={rgb, 255:red, 0; green, 0; blue, 0 }  ,fill opacity=1 ] (215.91,262.25) .. controls (215.91,260.02) and (217.71,258.2) .. (219.93,258.2) .. controls (222.15,258.2) and (223.95,260.02) .. (223.95,262.25) .. controls (223.95,264.49) and (222.15,266.3) .. (219.93,266.3) .. controls (217.71,266.3) and (215.91,264.49) .. (215.91,262.25) -- cycle ;
%Curve Lines [id:da5993277842234788] 
\draw [color={rgb, 255:red, 74; green, 144; blue, 226 }  ,draw opacity=1 ]   (219.93,262.25) .. controls (181.92,230.48) and (179.97,161.88) .. (218.74,131.4) ;
\draw [shift={(219.93,130.49)}, rotate = 503.13] [color={rgb, 255:red, 74; green, 144; blue, 226 }  ,draw opacity=1 ][line width=0.75]    (10.93,-3.29) .. controls (6.95,-1.4) and (3.31,-0.3) .. (0,0) .. controls (3.31,0.3) and (6.95,1.4) .. (10.93,3.29)   ;

% Text Node
\draw (97.92,47.35) node [anchor=north west][inner sep=0.75pt]    {$\mathbb{R}$};
% Text Node
\draw (429.72,357.92) node [anchor=north west][inner sep=0.75pt]    {$\mathbb{R}$};
% Text Node
\draw (221,110) node [anchor=north west][inner sep=0.75pt]    {$g$};
% Text Node
\draw (437.64,122) node [anchor=north west][inner sep=0.75pt]    {$\mathcal{G}_{d( g)}$};
% Text Node
\draw (438.18,255) node [anchor=north west][inner sep=0.75pt]    {$\mathcal{G}_{r( g)}$};
% Text Node
\draw (218,271.08) node [anchor=north west][inner sep=0.75pt]    {$r( g)$};
% Text Node
\draw (165.93,243) node [anchor=north west][inner sep=0.75pt]    {$v$};
% Text Node
\draw (152,99.4) node [anchor=north west][inner sep=0.75pt]    {$( R_{g})_{*} v$};
% Text Node
\draw (167,186.4) node [anchor=north west][inner sep=0.75pt]    {$R_{g}$};

\end{tikzpicture}

\end{center}

\caption{Translation of vectors via right multiplication in $\R \times \R \rightrightarrows \R$ \label{fig:right}}
\end{figure}

\end{center} 

Similarly we can consider the dual Lie algebroid $p: A^*(\cG) \to M$, which is just the dual bundle to $A(\cG)\to M$. It has a canonical Poisson bracket defined as follows. Let $X\in \Gamma(A(\cG))$ and $\lambda \in A^*(\cG)$. Define a linear Hamiltonian as 
$$
h_X = \langle \lambda, X \rangle.
$$
Then the Possion bracket satisfies the following conditions
$$
\{h_X,h_Y\} = -h_{[X,Y]}, \qquad \{f \circ \pi, h_X\} = \rho(X)(f) \circ \pi
$$
for all $X,Y \in \Gamma(A(\cG))$ and $f\in C^\infty(M)$.
(Note that the minus sign comes from the right invariance).

We will deal with a special case of Lie algebroids which are known as Lie manifolds which are often used in the study of open and singular manifolds~\cite{nistor_lie}.

\begin{definition}
\label{def:lie}
A Lie manifold is a pair $(M,\cV)$ consisting of a compact manifold $M$ and $\cV \subset \Gamma(TM)$ of vector fields tangent to $\p M$ satisfying the following properties:
\begin{itemize}
\item $\cV$ is closed under the Lie bracket $[\cdot,\cdot]$ on $TM$;
\item $\cV$ is a $C^\infty(M)$-module that is generated in a neighbourhood of $q\in M_0$ by a set of linearly independent vector fields $X_1,\dots,X_{n}$;
\item in an open neighbourhood $U\subset M_0$ of $q\in M_0$ inside the interior $M_0 \subset M$ we have isomorphism of $\cV|_{U}$ and $TU$ as Lie algebras.
\end{itemize}
\end{definition}
Due to Serre-Swan theorem~\cite[Theorem 6.18]{karoubi} there exists a Lie algebroid $A_\cV$ such that $\cV = Lie(A_\cV)$ and $\rho: A_\cV \to TM$ is an isomorphism over $M_0$. It is well known that not any Lie algebroid comes from a Lie groupoid and a complete set of obstructions were found by Crainic and Fernandes~\cite{crainic}. Lie manifolds and the associated Lie algebroids can be always integrated by results of~\cite{nistor_integration,debord}. The only problem is that this groupoid may fail to be Hausdorff. Similar to the theory of Lie groups, if a Lie algebroid is integrable it has a unique $d$-simply connected Lie groupoid integrating it meaning that $\cG_q$ is simply connected for each $q\in M$. Such integrations are often called maximal and any other integration would be a quotient by a discrete, totally disconnected normal Lie subgroupoid (see, for example, \cite[Theorem 1.20]{log-symp}).

Let us see a couple of examples that do integrate to Hausdorff Lie groupoids, namely to action groupoids. These two examples will be relevant to the 1D model studied in Section~\ref{sec:1D} and in the construction of a Lie groupoid associated to a 2D AR manifold in Subsection~\ref{subsec:ar_lie}.

\begin{example}
\label{example:action_r}
Consider the half-line $\R_+$ given by $\{x\in\R : x \geq 0\}$ and the Lie manifold  structure $\cV$ given by a $C^\infty$-module generated by $x\p_x$. The Lie algebroid structure is particularly simple since it is given by the trivial bundle $A_\cV = \R \times \R_+$ and the anchor map $\rho$ maps some section $\sigma\in \Gamma(A_\cV)$ to $x\p_x$. This Lie algebroid can be integrated to an action groupoid, where the action of $\R$ on $\R_+$ is given by
$$
x\mapsto e^t x, \qquad x\in \R_+, \qquad t\in \R.
$$
Thus the Lie groupoid which integrates $\cA_\cV$ is given by
$$
(\R \times \R_+) \rightrightarrows \R_+,
$$
with domain, target and multiplication maps
$$
d(t,x) = x, \qquad r(t,x) = e^t x, \qquad (t_2,e^{t_1}x)(t_1,x)=(t_2+t_1,x).
$$
Figure~\ref{fig:action} gives a graphical interpretation of the leaves $\cG_x = d^{-1}(x)$, $\cG^x = r^{-1}(x)$.

\begin{figure}[ht]
\begin{center}

\tikzset{every picture/.style={line width=0.75pt}} %set default line width to 0.75pt        

\begin{tikzpicture}[x=0.75pt,y=0.75pt,yscale=-0.8,xscale=0.8]
%uncomment if require: \path (0,300); %set diagram left start at 0, and has height of 300

%Straight Lines [id:da03987673626022603] 
\draw    (71.21,285.15) -- (71.21,18.94) ;
\draw [shift={(71.21,15.94)}, rotate = 450] [fill={rgb, 255:red, 0; green, 0; blue, 0 }  ][line width=0.08]  [draw opacity=0] (10.72,-5.15) -- (0,0) -- (10.72,5.15) -- (7.12,0) -- cycle    ;
%Straight Lines [id:da37917173150690553] 
\draw    (71.21,156.96) -- (377.85,156.96) ;
\draw [shift={(380.85,156.96)}, rotate = 180] [fill={rgb, 255:red, 0; green, 0; blue, 0 }  ][line width=0.08]  [draw opacity=0] (10.72,-5.15) -- (0,0) -- (10.72,5.15) -- (7.12,0) -- cycle    ;
%Curve Lines [id:da8143902972454478] 
\draw [color={rgb, 255:red, 213; green, 97; blue, 111 }  ,draw opacity=1 ] [dash pattern={on 0.84pt off 2.51pt}]  (87.99,28.76) .. controls (100.89,126.03) and (266.89,183.56) .. (363.22,195.41) ;
%Curve Lines [id:da14498629273446262] 
\draw [color={rgb, 255:red, 208; green, 2; blue, 27 }  ,draw opacity=1 ][line width=1.5]    (81.54,29.19) .. controls (82.4,127.58) and (252.7,211.55) .. (343.87,235.9) ;
%Curve Lines [id:da8548327992295457] 
\draw [color={rgb, 255:red, 213; green, 97; blue, 111 }  ,draw opacity=1 ] [dash pattern={on 0.84pt off 2.51pt}]  (80.25,28.76) .. controls (81.11,127.15) and (209.26,245.09) .. (303.01,270.73) ;
%Curve Lines [id:da8906494983426891] 
\draw [color={rgb, 255:red, 213; green, 97; blue, 111 }  ,draw opacity=1 ] [dash pattern={on 0.84pt off 2.51pt}]  (77.23,28.12) .. controls (84.98,130.2) and (127.12,226.34) .. (190.77,274.57) ;
%Curve Lines [id:da6056518700529356] 
\draw [color={rgb, 255:red, 213; green, 97; blue, 111 }  ,draw opacity=1 ] [dash pattern={on 0.84pt off 2.51pt}]  (74.22,28.12) .. controls (80.25,130.36) and (84.12,216.09) .. (121.53,271.85) ;
%Curve Lines [id:da7575314610676662] 
\draw [color={rgb, 255:red, 213; green, 97; blue, 111 }  ,draw opacity=1 ] [dash pattern={on 0.84pt off 2.51pt}]  (91.57,29.4) .. controls (125.55,122.5) and (258.43,143.02) .. (354.76,154.87) ;
%Curve Lines [id:da5870943857111093] 
\draw [color={rgb, 255:red, 213; green, 97; blue, 111 }  ,draw opacity=1 ] [dash pattern={on 0.84pt off 2.51pt}]  (96.73,29.72) .. controls (170.7,103.6) and (267.89,108.72) .. (366.81,116.74) ;
%Straight Lines [id:da018213794096287317] 
\draw [color={rgb, 255:red, 104; green, 159; blue, 220 }  ,draw opacity=1 ] [dash pattern={on 4.5pt off 4.5pt}]  (105.62,28.76) -- (105.62,285.15) ;
%Straight Lines [id:da8782481611615609] 
\draw [color={rgb, 255:red, 104; green, 159; blue, 220 }  ,draw opacity=1 ] [dash pattern={on 4.5pt off 4.5pt}]  (140.02,28.76) -- (140.02,285.15) ;
%Straight Lines [id:da3968017250404182] 
\draw [color={rgb, 255:red, 74; green, 144; blue, 226 }  ,draw opacity=1 ][line width=1.5]    (174.43,28.76) -- (174.43,285.15) ;
%Straight Lines [id:da7490467856241633] 
\draw [color={rgb, 255:red, 104; green, 159; blue, 220 }  ,draw opacity=1 ] [dash pattern={on 4.5pt off 4.5pt}]  (208.83,28.76) -- (208.83,285.15) ;
%Straight Lines [id:da07767070580702096] 
\draw [color={rgb, 255:red, 104; green, 159; blue, 220 }  ,draw opacity=1 ] [dash pattern={on 4.5pt off 4.5pt}]  (243.24,28.76) -- (243.24,285.15) ;
%Straight Lines [id:da30530675131392004] 
\draw [color={rgb, 255:red, 104; green, 159; blue, 220 }  ,draw opacity=1 ] [dash pattern={on 4.5pt off 4.5pt}]  (277.64,28.76) -- (277.64,285.15) ;
%Straight Lines [id:da36920243927773466] 
\draw [color={rgb, 255:red, 104; green, 159; blue, 220 }  ,draw opacity=1 ] [dash pattern={on 4.5pt off 4.5pt}]  (312.04,28.76) -- (312.04,285.15) ;
%Straight Lines [id:da18747852535195464] 
\draw [color={rgb, 255:red, 104; green, 159; blue, 220 }  ,draw opacity=1 ] [dash pattern={on 4.5pt off 4.5pt}]  (346.45,28.76) -- (346.45,285.15) ;
%Straight Lines [id:da07360251420039199] 
\draw [color={rgb, 255:red, 74; green, 144; blue, 226 }  ,draw opacity=1 ][line width=2.25]  [dash pattern={on 6.75pt off 4.5pt}]  (71.21,28.76) -- (71.21,285.15) ;
%Straight Lines [id:da5723480619488808] 
\draw [color={rgb, 255:red, 208; green, 2; blue, 27 }  ,draw opacity=1 ][line width=2.25]  [dash pattern={on 6.75pt off 4.5pt}]  (71.21,35.74) -- (71.21,283.15) ;
%Shape: Circle [id:dp6481314046972607] 
\draw  [fill={rgb, 255:red, 0; green, 0; blue, 0 }  ,fill opacity=1 ] (171.51,156.96) .. controls (171.51,155.35) and (172.82,154.04) .. (174.43,154.04) .. controls (176.04,154.04) and (177.34,155.35) .. (177.34,156.96) .. controls (177.34,158.57) and (176.04,159.87) .. (174.43,159.87) .. controls (172.82,159.87) and (171.51,158.57) .. (171.51,156.96) -- cycle ;

% Text Node
\draw (50.09,8.48) node [anchor=north west][inner sep=0.75pt]    {$t$};
% Text Node
\draw (382.09,155.91) node [anchor=north west][inner sep=0.75pt]    {$x$};
% Text Node
\draw (179.35,135.4) node [anchor=north west][inner sep=0.75pt]    {$x_{0}$};
% Text Node
\draw (354.4,229.95) node [anchor=north west][inner sep=0.75pt]    {$\mathcal{G}_{x_{0}}$};
% Text Node
\draw (179.36,10.56) node [anchor=north west][inner sep=0.75pt]    {$\mathcal{G}^{x_{0}}$};

\end{tikzpicture}

\end{center}
\caption{$d$- and $r$-fibers of the action groupoid (vertical lines and graphs of exponential functions correspondingly. The space of units is the $x$-axis)\label{fig:action}}
\end{figure}
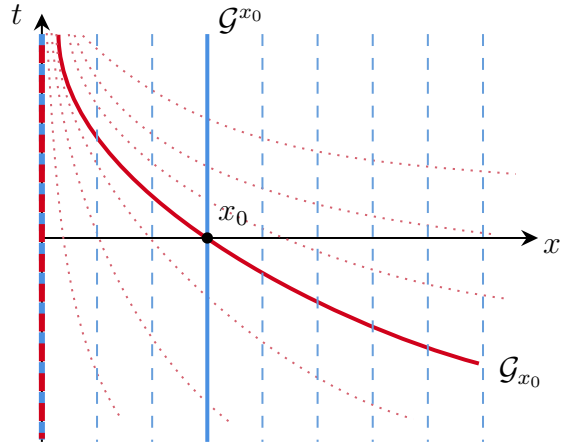

Note that $x=0$ is $\cG$-invariant subset and $\cG_0 = \cG^0$ is the isotropy group given by $\R$ with the standard addition operation. The set $x>0$ is also a $\cG$-invariant subset of $\R_+$. The restriction groupoid $\cG_{x>0}$ is equivalent to the pair groupoid. Indeed, to see this it is enough to make a change of variables
$$
(t,x) \mapsto (e^tx,x), \qquad x>0.
$$

\end{example}

\begin{example}
\label{ex:grushin}
We can consider a generalisation of the last example relevant to the study of Grushin points. Let $\R^2_+$ be the half-space given by $\{(x,y)\in \R^2\,:\, x \geq 0\}$ and the Lie manifold structure $\cV$ which is a $C^\infty$-module over $\{x\p_x,x^2\p_y\}$. Consider a solvable Lie algebra $\fg$ generated by two vector fields $X_1,X_2$ which satisfy $[X_1,X_2]=2X_2$. The Lie algebroid $A_\cV$ is a trivial bundle $\R^2_+ \times \fg$ with the anchor map given by $\rho(X_1) = x \p_x$, $\rho(X_2) = x^2\p_y$ and extended by linearity to $\Gamma(A)$. The Lie groupoid structure is given by the action groupoid $G \ltimes \R^2_+$, where $G$ is a group isomorphic to the affine group of the real line:
\begin{equation}
\label{eq:group}
G = \left\{\begin{pmatrix}
a^2 & b \\
0 & 1
\end{pmatrix} \, : \, a>0, b\in \R \right\}.
\end{equation}
The action of $G$ given by
$$
(x,y)\mapsto (ax,bx^2+y).
$$
We can identify $\R^2_+\setminus\{x=0\}$ with $G$ via a map
$$
(x,y)\mapsto \begin{pmatrix}
x^2 & y\\
0 & 1
\end{pmatrix}.
$$
On the boundary $\R$ the action is trivial, while in the interior it coincides with the right action of $G$ on itself.

As we have discussed in the Example~\ref{ex:pair} $G \ltimes G \simeq G \times G$. Therefore the $d$-simply connected Lie groupoid which integrates $A_\cV$ topologically is given by
$$
(G\times \R) \sqcup (G \times G) \rightrightarrows \R^2_+ \simeq \R^2_+.
$$
\end{example}

\subsection{Integrating Lie algebroids and glueing Lie groupoids}

\label{subsec:lie_int}

As mentioned before, in contrast to Lie algebras not any Lie algebroid can be realised as a Lie algebroid of some Lie groupoid. However this is always true for Lie algebroids coming from Lie manifolds. For our problem of determining the closure of differential operators we will require additional properties, in particular the integrating Lie groupoid should be Hausdorff.

One of the techniques of constructing new groupoids from the old ones is the \textit{gluing construction} or the \textit{fibered coproduct}, which was used successfully in~\cite{log-symp}. Let $M_1,M_2$ be two manifolds and consider two open immersions $i_j : U \hookrightarrow M_j$, $j=1,2$  of a manifold $U$. We can declare two points $q'_j \in M_j$, $j=1,2$ to be equivalent if there exist a point $q\in U$ such that $q'_j=i_j(q)$. Thus the fibred coproduct is defined as the quotient
$$
\dfrac{M_1 \sqcup M_2}{i_1(q) \sim i_2(q), \forall q\in U}
$$
and will be denoted as
$$
M_1 \sqcup M_2 /\sim
$$
Let $A \to M$ be a Lie algebroid. Assume that $M$ has an open cover $(U_i)_{i \in I}$ and that we can construct Lie groupoids $\cG_i$ which integrate the restrictions $A|_{U_i}$. A natural idea would be to glue $\cG_i$ along the reductions $(\cG_{i})|_{U_i \cap U_j}$. Unfortunately the result may fail to be a Lie groupoid. For example, consider the pair groupoid $\cG = \R \times \R$ and assume that we want to glue together its reductions $\cG|_{(-\infty,\varepsilon)}$, $\cG|_{(-\varepsilon,+\infty)}$. The result is not a Lie groupoid since the multiplication is not well-defined for all elements as can be easily seen from Figure~\ref{fig:failure}.

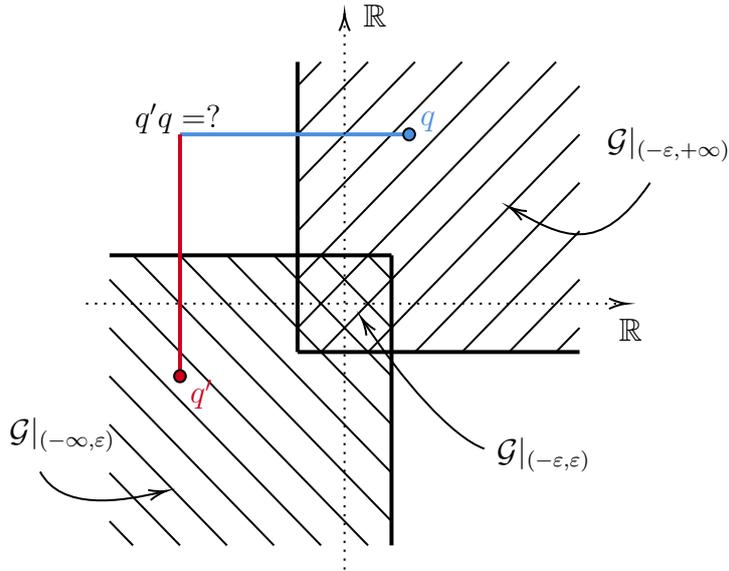
\begin{figure}[h]

\begin{center}

\tikzset{every picture/.style={line width=0.75pt}} %set default line width to 0.75pt        

\begin{tikzpicture}[x=0.75pt,y=0.75pt,yscale=-0.8,xscale=0.8]
%uncomment if require: \path (0,405); %set diagram left start at 0, and has height of 405

%Straight Lines [id:da0007999956980955236] 
\draw [line width=1.5]    (201.19,55.61) -- (201.19,238.03) ;
%Straight Lines [id:da6755174012636197] 
\draw [line width=1.5]    (377.18,238.03) -- (201.19,238.03) ;
%Straight Lines [id:da3384306665197422] 
\draw [line width=1.5]    (259.85,177.22) -- (259.85,359.64) ;
%Straight Lines [id:da4702818991753067] 
\draw [line width=1.5]    (259.85,177.22) -- (83.86,177.22) ;
%Straight Lines [id:da3744618183491305] 
\draw    (83.86,314.04) -- (127.86,359.64) ;
%Straight Lines [id:da1473649000444034] 
\draw    (83.86,283.63) -- (157.19,359.64) ;
%Straight Lines [id:da6409528563984405] 
\draw    (83.86,253.23) -- (186.52,359.64) ;
%Straight Lines [id:da45804649742529646] 
\draw    (83.86,222.83) -- (215.85,359.64) ;
%Straight Lines [id:da49259971930451607] 
\draw    (83.86,192.42) -- (245.19,359.64) ;
%Straight Lines [id:da7612972118599853] 
\draw    (98.52,177.22) -- (259.85,344.44) ;
%Straight Lines [id:da7921813055008693] 
\draw    (127.86,177.22) -- (259.85,314.04) ;
%Straight Lines [id:da44187727613545225] 
\draw    (157.19,177.22) -- (259.85,283.63) ;
%Straight Lines [id:da6759417442593876] 
\draw    (186.52,177.22) -- (259.85,253.23) ;
%Straight Lines [id:da07345532072299876] 
\draw    (215.85,177.22) -- (259.85,222.83) ;
%Straight Lines [id:da9341969893596445] 
\draw    (245.19,177.22) -- (259.85,192.42) ;
%Straight Lines [id:da2554419360773921] 
\draw    (377.18,70.81) -- (215.85,238.03) ;
%Straight Lines [id:da5648285953727586] 
\draw    (362.52,55.61) -- (201.19,222.83) ;
%Straight Lines [id:da6757089617997488] 
\draw    (377.18,101.21) -- (245.19,238.03) ;
%Straight Lines [id:da22427583141705631] 
\draw    (333.18,55.61) -- (201.19,192.42) ;
%Straight Lines [id:da5474925203494909] 
\draw    (377.18,131.61) -- (274.52,238.03) ;
%Straight Lines [id:da9530189802020077] 
\draw    (303.85,55.61) -- (201.19,162.02) ;
%Straight Lines [id:da007669259945019102] 
\draw    (377.18,162.02) -- (303.85,238.03) ;
%Straight Lines [id:da1260639036680079] 
\draw    (274.52,55.61) -- (201.19,131.61) ;
%Straight Lines [id:da8548745581401263] 
\draw    (377.18,192.42) -- (333.18,238.03) ;
%Straight Lines [id:da9422087832357433] 
\draw    (245.19,55.61) -- (201.19,101.21) ;
%Straight Lines [id:da22231045144073625] 
\draw    (377.18,222.83) -- (362.52,238.03) ;
%Straight Lines [id:da09674046754799859] 
\draw    (215.85,55.61) -- (201.19,70.81) ;
%Straight Lines [id:da8495996544253233] 
\draw [color={rgb, 255:red, 74; green, 144; blue, 226 }  ,draw opacity=1 ][line width=1.5]    (127.86,101.21) -- (274.52,101.21) ;
%Straight Lines [id:da5727491908511898] 
\draw [color={rgb, 255:red, 208; green, 2; blue, 27 }  ,draw opacity=1 ][line width=1.5]    (127.86,101.21) -- (127.86,253.23) ;
%Straight Lines [id:da32364463134763777] 
\draw  [dash pattern={on 0.84pt off 2.51pt}]  (69.19,207.62) -- (404.51,207.62) ;
\draw [shift={(406.51,207.62)}, rotate = 180] [color={rgb, 255:red, 0; green, 0; blue, 0 }  ][line width=0.75]    (10.93,-3.29) .. controls (6.95,-1.4) and (3.31,-0.3) .. (0,0) .. controls (3.31,0.3) and (6.95,1.4) .. (10.93,3.29)   ;
%Straight Lines [id:da6210634839666637] 
\draw  [dash pattern={on 0.84pt off 2.51pt}]  (230.52,374.84) -- (230.52,27.2) ;
\draw [shift={(230.52,25.2)}, rotate = 450] [color={rgb, 255:red, 0; green, 0; blue, 0 }  ][line width=0.75]    (10.93,-3.29) .. controls (6.95,-1.4) and (3.31,-0.3) .. (0,0) .. controls (3.31,0.3) and (6.95,1.4) .. (10.93,3.29)   ;
%Curve Lines [id:da6172618828173073] 
\draw    (40.59,313.28) .. controls (53.82,337.78) and (90,332.43) .. (117.67,325.77) ;
\draw [shift={(119.35,325.36)}, rotate = 526.21] [color={rgb, 255:red, 0; green, 0; blue, 0 }  ][line width=0.75]    (10.93,-3.29) .. controls (6.95,-1.4) and (3.31,-0.3) .. (0,0) .. controls (3.31,0.3) and (6.95,1.4) .. (10.93,3.29)   ;
%Curve Lines [id:da24489330685486332] 
\draw    (421.18,131.61) .. controls (389.4,178.43) and (366.16,165.86) .. (334.63,147.65) ;
\draw [shift={(333.18,146.82)}, rotate = 390.03] [color={rgb, 255:red, 0; green, 0; blue, 0 }  ][line width=0.75]    (10.93,-3.29) .. controls (6.95,-1.4) and (3.31,-0.3) .. (0,0) .. controls (3.31,0.3) and (6.95,1.4) .. (10.93,3.29)   ;
%Curve Lines [id:da8802577556541753] 
\draw    (317.78,298.84) .. controls (297.56,291.75) and (265.52,254.69) .. (242.14,218.33) ;
\draw [shift={(241.08,216.67)}, rotate = 417.53999999999996] [color={rgb, 255:red, 0; green, 0; blue, 0 }  ][line width=0.75]    (10.93,-3.29) .. controls (6.95,-1.4) and (3.31,-0.3) .. (0,0) .. controls (3.31,0.3) and (6.95,1.4) .. (10.93,3.29)   ;
%Straight Lines [id:da7994537266390962] 
\draw    (83.86,344.44) -- (98.52,359.64) ;
%Shape: Ellipse [id:dp9038037792338212] 
\draw  [fill={rgb, 255:red, 208; green, 2; blue, 27 }  ,fill opacity=1 ] (124.19,253.23) .. controls (124.19,251.13) and (125.83,249.43) .. (127.86,249.43) .. controls (129.88,249.43) and (131.52,251.13) .. (131.52,253.23) .. controls (131.52,255.33) and (129.88,257.03) .. (127.86,257.03) .. controls (125.83,257.03) and (124.19,255.33) .. (124.19,253.23) -- cycle ;
%Shape: Ellipse [id:dp2534494609078717] 
\draw  [fill={rgb, 255:red, 74; green, 144; blue, 226 }  ,fill opacity=1 ] (267.19,101.21) .. controls (267.19,99.11) and (268.83,97.41) .. (270.85,97.41) .. controls (272.88,97.41) and (274.52,99.11) .. (274.52,101.21) .. controls (274.52,103.31) and (272.88,105.01) .. (270.85,105.01) .. controls (268.83,105.01) and (267.19,103.31) .. (267.19,101.21) -- cycle ;

% Text Node
\draw (240.39,17.86) node [anchor=north west][inner sep=0.75pt]    {$\mathbb{R}$};
% Text Node
\draw (399.98,215.49) node [anchor=north west][inner sep=0.75pt]    {$\mathbb{R}$};
% Text Node
\draw (392.41,94.91) node [anchor=north west][inner sep=0.75pt]    {$\mathcal{G}|_{( -\varepsilon ,+\infty )}$};
% Text Node
\draw (19.49,277.33) node [anchor=north west][inner sep=0.75pt]    {$\mathcal{G}|_{( -\infty ,\varepsilon )}$};
% Text Node
\draw (323.65,290.14) node [anchor=north west][inner sep=0.75pt]    {$\mathcal{G}|_{( -\varepsilon ,\varepsilon )}$};
% Text Node
\draw (98,80) node [anchor=north west][inner sep=0.75pt]    {$q' q =?$};
% Text Node
\draw (276.09,85) node [anchor=north west][inner sep=0.75pt]  [color={rgb, 255:red, 74; green, 144; blue, 226 }  ,opacity=1 ]  {$q $};
% Text Node
\draw (132.36,253.49) node [anchor=north west][inner sep=0.75pt]  [color={rgb, 255:red, 208; green, 2; blue, 27 }  ,opacity=1 ]  {$q' $};

\end{tikzpicture}

\end{center}

\caption{Gluing reductions of a pair groupoid $\R\times \R \rightrightarrows \R$. The multiplication is only well defined inside the reduction groupoid $(-\varepsilon,\varepsilon)^2 \rightrightarrows (-\varepsilon,\varepsilon)$ \label{fig:failure}}
\end{figure}

We see now that extra conditions must be imposed on the glued Lie groupoids in order to ensure that the multiplication is well defined after taking the fibered coproduct. In~\cite{log-symp,Remi} the following natural gluing condition was proposed. 

\begin{definition}[\cite{Remi}]
\label{def:glue}
Let $(U_i)_{i\in I}$ be a locally finite open cover of $M$, $(\cG_i \rightrightarrows U_i)_\cG$ be Lie groupoids and 
$$
\varphi_{ij} : \cG_{i}|_{U_i \cap U_j} \to \cG_{j}|_{U_i \cap U_j} 
$$ 
be isomorphisms which satisfy $\varphi_{ij}=\varphi_{ji}$ and $\varphi_{ij}\varphi_{jk}=\varphi_{ik}$ for all $i,j,k \in I$ on common domains. We say that $(\cG_i)_{i\in I}$ satisfies the \textit{weak gluing condition} if for any two composable arrows $(g,g')$ of the fibered co-product
$$
\cG = \bigsqcup_{i\in I}\cG_i
$$
there exists $i\in I$ such that both $g,g'$ have a representative in $I$.
\end{definition}

For instance, consider the Lie groupoid $\cG$ from Example~\ref{example:action_r} and consider the cover $U_1= [0,\varepsilon)$, $U_2 = (0,+\infty)$. If $g,g'$ belong to $\cG|_{U_2}$, then we just use the multiplication in $\cG|_{U_2}$. If instead $g\in \cG\setminus \cG|_{U_2} = \cG_0$, then it can only be composed with $g' \in \cG_0$. But $\cG_0$ is a Lie subgroupoid of $\cG_1$ and hence the weak gluing conditions is satisfied. 

\begin{proposition}
\label{prop:glue}
Consider $M,(U_i)_{i\in I},(\cG_i)_{i\in I}$ and $\cG$ to be as in Definition~\ref{def:glue}. Then $\cG$ is a Lie groupoid. Moreover if all $(\cG_i)_{\in I}$ are Hausdorff, then $\cG$ is Hausdorff as well.
\end{proposition}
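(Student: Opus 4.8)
The plan is to build $\cG$ chart-by-chart from the pieces $\cG_i$ and then transport all the groupoid structure along the gluing, the only genuinely delicate points being the multiplication (which is exactly what the weak gluing condition is designed to control) and the associativity axiom.

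First I would check that the relations $\varphi_{ij}=\varphi_{ji}$, $\varphi_{ij}\varphi_{jk}=\varphi_{ik}$ (with $\varphi_{ii}=\id$) make the identification $g\sim\varphi_{ij}(g)$ a genuine equivalence relation, so that the fibered coproduct $\cG=\bigsqcup_{i}\cG_i/\!\sim$ is well defined. Since each reduction $\cG_i|_{U_i\cap U_j}=d_i^{-1}(U_i\cap U_j)\cap r_i^{-1}(U_i\cap U_j)$ is open in $\cG_i$ and each $\varphi_{ij}$ is a diffeomorphism onto such an open set, $\cG$ is a smooth manifold (with corners) carrying each $\cG_i$ as an open chart; local finiteness of $(U_i)$, together with second countability of $M$, makes the index set countable and hence $\cG$ second countable. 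The maps $u,d,r$ and the inversion $\iota$ are defined on each chart and, because every $\varphi_{ij}$ is a morphism of groupoids covering $\id_{U_i\cap U_j}$, they agree on overlaps (e.g. $d_j\circ\varphi_{ij}=d_i$ and $\varphi_{ij}\circ u_i=u_j$); hence they descend to smooth maps on $\cG$, with $d,r$ tame submersions. This part is essentially formal.

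The core of the argument is the multiplication. Given a composable pair $(g,g')$ in $\cG$, the weak gluing condition furnishes an index $i$ with representatives of both in $\cG_i$, and I would set $gg':=m_i(g,g')$. Well-definedness rests on two observations: that a reduction is closed under products of composable pairs lying in it (the product of two arrows whose source and range lie in $U_i\cap U_j$ again has source and range there), so that $\varphi_{ij}$ applies to the product; and that $\varphi_{ij}\circ m_i=m_j\circ(\varphi_{ij}\times\varphi_{ij})$, which is just the statement that $\varphi_{ij}$ respects multiplication. Smoothness is then local, since near $(g,g')$ the product coincides with $m_i$ and composable pairs close to $(g,g')$ lift to composable pairs in the open chart $\cG_i$. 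The unit and inverse axioms involve a single arrow with its units and inverse and are verified inside one chart. The remaining axiom, associativity, is the main obstacle: a composable triple need not lie in a single chart, so $(g_1g_2)g_3$ and $g_1(g_2g_3)$ may be computed in different charts. I would handle this by treating the intermediate products $g_1g_2$ and $g_2g_3$ as well-defined elements of $\cG$, evaluating each bracketing in the chart supplied by the weak gluing condition, and using the cocycle relations to identify the two resulting arrows in $\cG$; this bookkeeping is the technical heart of the construction (carried out in~\cite{log-symp,Remi}).

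Finally, for the Hausdorff statement I would invoke the standard criterion that a fibered coproduct of Hausdorff manifolds along open sets is Hausdorff precisely when each gluing map has closed graph, reducing via local finiteness to one pair at a time. To see that the graph of $\varphi_{ij}$ is closed in $\cG_i\times\cG_j$, suppose $(x_n,\varphi_{ij}(x_n))\to(x,y)$. Applying the globally defined source map and using that $\varphi_{ij}$ covers the identity on the base gives $d_i(x_n)=d_j(\varphi_{ij}(x_n))$, so this sequence converges in $M$ to both $d_i(x)$ and $d_j(y)$; as $M$ is Hausdorff these coincide and lie in $U_i\cap U_j$, and likewise for the range. Hence $x$ already lies in the reduction $\cG_i|_{U_i\cap U_j}$, and continuity of $\varphi_{ij}$ forces $y=\varphi_{ij}(x)$. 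Thus every gluing map has closed graph and $\cG$ is Hausdorff.
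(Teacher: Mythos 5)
The paper never proves this proposition: it is stated as imported from~\cite{Remi} (see also~\cite{log-symp}), so there is no internal proof to compare against, and your attempt must be judged on its own terms. The routine parts of your proposal are correct. The cocycle relations do make the identification an equivalence relation with at most one representative per chart; the structure maps descend; your well-definedness argument for the product is sound (the key facts being that membership in a reduction is a condition on \emph{both} source and range, hence reductions are closed under composition, and that $\varphi_{ij}$ is multiplicative on reductions); and your Hausdorffness argument is complete and correct. In fact the closed-graph argument captures exactly the right point: a limit of points of the domain of $\varphi_{ij}$ whose sources and ranges converge in $M$ automatically has source and range in $U_i\cap U_j$, hence lies in the domain, which is why gluing along reductions (unlike gluing along arbitrary open subsets) preserves Hausdorffness.

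The genuine gap is associativity, which you yourself call the technical heart and then outsource to~\cite{log-symp,Remi}; that is a citation, not a proof, and the one-sentence strategy you sketch for it would not work as described. Concretely, for a composable triple $(g_1,g_2,g_3)$ write $x_0=d(g_3)$, $x_1=r(g_3)=d(g_2)$, $x_2=r(g_2)=d(g_1)$, $x_3=r(g_1)$. An arrow has a representative in $\cG_i$ if and only if both its source and range lie in $U_i$. Hence the chart furnished by the weak gluing condition for the pair $(g_1g_2,g_3)$ is only forced to contain $x_0,x_1,x_3$, and the chart for $(g_1,g_2g_3)$ only $x_0,x_2,x_3$; neither need contain representatives of all three factors, since the missing intermediate unit ($x_2$, respectively $x_1$) may lie outside it. So in the chart where $(g_1g_2)g_3$ is computed you cannot re-expand $g_1g_2$ as a product, and ``using the cocycle relations to identify the two resulting arrows'' has nothing to act on: the cocycle identities are identities of maps on reductions over triple overlaps, whereas the two bracketed products live in charts that cannot see the intermediate units. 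The pair condition controls pairs, not triples; the cancellation identities it does yield (for instance $(ab)b^{-1}=a$ and $a^{-1}(ab)=b$, which follow because a chart containing $ab$ and one factor automatically contains the other) are not enough to force $(g_1g_2)g_3=g_1(g_2g_3)$ by formal manipulation. A correct proof requires an additional idea beyond your sketch --- for example, first establishing smoothness of the product and Hausdorffness, then proving associativity by continuity from the set of composable triples whose factors all fit in a single chart, after showing that set is dense in the manifold of composable triples --- and no such idea appears in your proposal.
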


Consider again the Example~\ref{example:action_r}. Suppose that we are only given the Lie algebroid $A(\cG)$ and we do not know yet whether it is integrable or not. 
There are two $A(\cG)$-invariant subsets of $[0,+\infty)$ given by $\{0\}$ and $(0,+\infty)$. We could separately integrate the restrictions of $A(\cG)$ to the two invariant subsets and take
$$
\cG = \cG_0 \sqcup \cG_{(0,+\infty)},
$$
For now we can only deduce that $\cG$ is a groupoid, because there is yet no topology on $\cG_0 \sqcup \cG_{(0,+\infty)}$. In~\cite{nistor_integration} the author describes a way of topologising this groupoid and many others.

\begin{definition}[\cite{Remi}]
By a \textit{stratified manifold} we mean a manifold together with a disjoint decomposition
$$
M = \bigsqcup_{i\in I} S_i,
$$
by a local family of smooth manifolds such that the closure is a submanifold and each $S$ is contained in a unique open face of $M$. A stratification $\sqcup_{i\in I}S_i = M$ is called \textit{$A$-invariant} if sections $\Gamma(\rho(A))$ preserve the stratification.
\end{definition} 

The following theorem is the main result of~\cite{nistor_integration}.
\begin{theorem}[\cite{nistor_integration}]
\label{thm:glue}
Let $A\to M$ be a Lie-algebroid and $\sqcup_{i\in I} S_i$ an $A$-invariant stratification. If there exist $d$-simply connected Lie groupoids $\cG_{S_i}$ integrating $A|_{S_i}$, then 
$$
\cG = \bigsqcup_{i\in I}\cG_{S_i}
$$ 
is a Lie groupoid.
\end{theorem}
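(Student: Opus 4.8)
The plan is to treat the statement as a \emph{topologisation} problem: the set $\cG=\bigsqcup_{i}\cG_{S_i}$ already carries an evident algebraic groupoid structure, and the whole content is to produce a compatible manifold-with-corners structure. The algebraic part is immediate. The unit space is $M=\bigsqcup_i S_i$, and for $g\in\cG_{S_i}$ one has $d(g),r(g)\in S_i$; hence a pair $(g',g)$ is composable only when $r(g)=d(g')\in S_i$, forcing $g'\in\cG_{S_i}$ as well. Thus composition, inversion and units of the individual $\cG_{S_i}$ assemble, without interaction between different strata, into maps $d,r,m,i,u$ on $\cG$. Everything reduces to exhibiting a smooth structure in which these maps are smooth and $d$ is a tame submersion onto $M$, and the difficulty is entirely transverse to the stratification.

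First I would produce local smooth models. Since every Lie algebroid is integrable over a sufficiently small open set, choose a locally finite cover $(U_\alpha)$ of $M$ such that $A|_{U_\alpha}$ admits a Hausdorff, $d$-simply connected integration $\cH_\alpha\rightrightarrows U_\alpha$. The role of $A$-invariance enters here: because the flows of $\Gamma(\rho(A))$ preserve the strata, every source fibre $d^{-1}(q)\subset\cH_\alpha$ with $q\in S_i\cap U_\alpha$ has its range inside $S_i\cap U_\alpha$, so $\cH_\alpha$ splits as $\bigsqcup_i\cH_\alpha|_{S_i\cap U_\alpha}$ with each piece a $d$-simply connected integration of $A|_{S_i\cap U_\alpha}$ (its source fibres are the full, simply connected source fibres of $\cH_\alpha$). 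Each $\cH_\alpha$ is therefore a bona fide smooth model that already interpolates smoothly across the strata meeting $U_\alpha$.

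Next I would transfer this smoothness to the prescribed reductions $\cG|_{U_\alpha}=\bigsqcup_i\cG_{S_i}|_{S_i\cap U_\alpha}$. Along each stratum, $\cG_{S_i}|_{S_i\cap U_\alpha}$ is smooth because it is the reduction of the given Lie groupoid $\cG_{S_i}$ to an open subset; moreover both $\cG_{S_i}|_{S_i\cap U_\alpha}$ and $\cH_\alpha|_{S_i\cap U_\alpha}$ integrate the same algebroid $A|_{S_i\cap U_\alpha}$, and $d$-simple connectivity of the latter yields a canonical \'etale covering morphism $\cH_\alpha|_{S_i\cap U_\alpha}\to\cG_{S_i}|_{S_i\cap U_\alpha}$. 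Pulling the manifold-with-corners structure of $\cH_\alpha$ back through these local diffeomorphisms, and over the overlaps $U_\alpha\cap U_\beta$ comparing the two models through the transition isomorphisms forced by uniqueness of $d$-simply connected integrations, equips $\cG|_{U_\alpha}$ with a smooth structure compatible transverse to the strata. Invariance again guarantees that composable arrows never change stratum, so the family $(\cG|_{U_\alpha})_\alpha$ meets the weak gluing condition of Definition~\ref{def:glue}; Proposition~\ref{prop:glue} then assembles them into a single Lie groupoid, Hausdorff because each $\cH_\alpha$ is.

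The main obstacle is precisely the cross-stratum smoothness of the previous step, that is, showing that the source fibres $d^{-1}(q)$ — each the simply connected source fibre of the appropriate $\cG_{S_i}$ — degenerate smoothly as $q$ passes from a higher-dimensional stratum into a lower one, so that the local models $\cH_\alpha$ and the global fibres of the $\cG_{S_i}$ glue into one manifold with corners. This is where all three hypotheses are used together: $A$-invariance keeps source fibres within strata and makes the exponential charts of $\cH_\alpha$ respect the stratification, $d$-simple connectivity removes the monodromy ambiguity in the comparison and transition maps, and local integrability supplies the transverse charts in the first place. Without invariance even the algebra breaks down, since composable arrows could escape the stratum and the weak gluing condition would fail, exactly as in the pair-groupoid picture of Figure~\ref{fig:failure}, where the glued multiplication is defined only on the common reduction.
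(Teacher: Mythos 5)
Your reduction to the algebraic part is correct and agrees with the paper: $A$-invariance forces composable arrows to stay inside a single stratum, so $\cG=\bigsqcup_i\cG_{S_i}$ is a groupoid and everything hinges on producing the differentiable structure transverse to the strata. The gap sits exactly at the step you yourself flag as ``the main obstacle'', and it is twofold. First, your local models $\cH_\alpha$ are assumed rather than constructed. The claim that every Lie algebroid is integrable over a sufficiently small open set to a \emph{Hausdorff}, $d$-simply connected Lie groupoid is not a standard fact and is not available here: what integration theory gives for free (Debord, Crainic--Fernandes) is a \emph{local} groupoid, with multiplication defined only near the units, and the paper stresses that even globally integrable Lie-manifold algebroids may integrate only to non-Hausdorff groupoids. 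Worse, a Hausdorff Lie groupoid $\cH_\alpha\rightrightarrows U_\alpha$ over a neighbourhood $U_\alpha$ meeting several strata is precisely a local solution of the problem the theorem is solving --- a smooth groupoid structure interpolating across strata --- so assuming its existence is circular: once you have the $\cH_\alpha$, Proposition~\ref{prop:glue} essentially finishes the job, and the hard content has been hidden in the hypothesis.

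Second, even granting the $\cH_\alpha$, the transfer step fails as stated. The canonical morphisms $\phi_i:\cH_\alpha|_{S_i\cap U_\alpha}\to\cG_{S_i}|_{S_i\cap U_\alpha}$ supplied by Lie~II exist only stratum by stratum; each is \'etale inside its own stratum, but the assembled map $\bigsqcup_i\phi_i$ need not be locally injective, open, or surjective on a neighbourhood in $\cH_\alpha$ of a point lying over a lower-dimensional stratum, because the kernels of the coverings $\phi_i$ over the different strata are unrelated discrete bundles and can jump arbitrarily as one approaches the boundary stratum. Pushing a manifold-with-corners structure forward along such a map is therefore not well defined, and this is exactly the cross-stratum degeneration you name but never resolve; invoking ``all three hypotheses together'' is a description of the difficulty, not an argument. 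The paper (following~\cite{nistor_integration}) resolves it by a different and explicit mechanism: the topology and charts near the units are built directly from flows of complete right-invariant vector fields, namely the maps $\varphi_X^Y$ of~\eqref{eq:chart}, and then extended to the rest of $\cG$ by right multiplication; Theorem~\ref{thm:charts} states that $\cG$ carries a differentiable structure precisely when this family $\Phi$ forms an atlas. In your write-up these charts, or any substitute for them, never appear, so the proof does not go through.
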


The idea of the proof goes as follows. The multiplication, domain and range maps are well defined on $\cG$. So in order to transform $\cG$ into a Lie groupoid we only need to define topology in a neighbourhood of the units and then use the multiplication map in order to extend the topology to the rest of $\cG$. 

More precisely, let $X_1,\dots,X_m$ be complete right invariant vector fields, and $Y_1,\dots, Y_n$ be complete right invariant vector fields which form a basis of $A_q$ for a fixed $q\in M$, such that $\rho(X_i)$ and $\rho(Y_j)$ are smooth sections of $TM$. If $U\subset M$ is a neighbourhood of $q$ and $B_\varepsilon$ a sufficiently small neighbourhood of $0$ in $\R^n$, we can define maps $\varphi_X^Y:B_\varepsilon \times U \to \cG$ 
\begin{equation}
\label{eq:chart}
\varphi_X^Y : (t_n,\dots,t_1,q)\mapsto e^{X_m}\circ \dots \circ e^{X_1} \circ e^{t_n Y_n} \circ \dots \circ e^{t_1 Y_1}q,
\end{equation}   
where $e^{tY}$ is the flow of a vector field $Y$ at a moment of time $t$. It should be noted that if we choose $X_1,\dots,X_m$ to be zero we essentially recover the local Lie groupoid structure constructed in~\cite{debord}. Denote by $\Phi$ all collections of maps $\varphi_X^Y$ over different choices of $X,Y$ and open neighbourhoods $U$. Then the following theorem holds.

\begin{theorem}[\cite{nistor_integration}]
\label{thm:charts}
Let $A \to M$ be a Lie algebroid and $M = \sqcup_{i\in I}S_i$ be an $A$-invariant stratification of $M$. Assume that each $A|_{S_i}$ is integrable and let $\cG_{S_i}$ be the corresponding $d$-simply connected Lie groupoids. Then
$$
\cG = \bigsqcup_{i\in I}\cG_{S_i}
$$
has a differentiable structure if, and only if, the family $\Phi$ consisting of maps~\eqref{eq:chart} is a differentiable atlas.
\end{theorem}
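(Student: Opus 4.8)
The plan is to prove the two implications separately, with the bulk of the substance residing in the direction that promotes an atlas to a Lie groupoid. Throughout I would use that, because the stratification is $A$-invariant, the source, range, unit, inversion and multiplication maps already assemble into a well-defined \emph{algebraic} groupoid structure on the set $\cG=\bigsqcup_{i\in I}\cG_{S_i}$: a pair $(g,g')$ can be composable only if $d(g)=r(g')$ lies in a single stratum $S_i$, so composition never mixes strata, and each $\cG_{S_i}$ carries its own operations. Hence only the \emph{smooth} structure is genuinely at issue, and the theorem records exactly when the naive charts~\eqref{eq:chart} suffice to produce it.

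For the forward implication, suppose $\cG$ already carries a differentiable structure making the operations smooth. Then every right-invariant vector field attached to a smooth section of $A$ is a smooth (and, by hypothesis, complete) vector field on $\cG$, so each flow $e^{X_i}$ and $e^{t_jY_j}$ is a smooth diffeomorphism between $d$-fibres and each $\varphi_X^Y$ is smooth. To see it is a chart, observe that the differential of $(t,q)\mapsto e^{t_nY_n}\circ\dots\circ e^{t_1Y_1}q$ at $t=0$ sends $\partial_{t_j}$ to $Y_j(q)$, which span $A_q=T_q\big(d^{-1}(q)\big)$, while the $U$-directions map isomorphically onto a complement transverse to the $d$-fibre through $u(q)$; left-translating by the diffeomorphisms $e^{X_i}$ preserves this, so $\varphi_X^Y$ is a local diffeomorphism. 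Compatibility of overlapping charts is automatic once all of them are smooth for the fixed structure, so $\Phi$ is a differentiable atlas.

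For the reverse implication, which is the main content, assume $\Phi$ is a differentiable atlas and use it to manufacture the structure. First I would check that the images of the $\varphi_X^Y$ cover $\cG$: given $g\in\cG_{S_i}$, the $d$-connectedness of the $d$-simply connected groupoid $\cG_{S_i}$ lets me write $g=e^{X_m}\circ\dots\circ e^{X_1}\big(u(q)\big)$ for complete right-invariant fields $X_k$ and a unit $q$, realising $g$ in the range of some $\varphi_X^Y$; completeness of the chosen fields is precisely what makes this possible. Each $\varphi_X^Y$ is injective on its domain (within a stratum this is the groupoid structure of $\cG_{S_i}$, across strata it is the chart part of the atlas hypothesis), so declaring the $\varphi_X^Y$ to be diffeomorphisms onto open sets endows $\cG$ with a differentiable structure that is well defined exactly because the atlas hypothesis furnishes smooth transition maps. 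It then remains to verify that $d,r,u,i,m$ are smooth for this structure: in the coordinates~\eqref{eq:chart} the maps $d,r$ are expressed through the flows of the \emph{anchored} fields $\rho(X_i),\rho(Y_j)$, which are genuine smooth vector fields on $M$; $u$ is the inclusion, smooth in the charts $\varphi_0^Y$; and since the charts are built from right-invariant flows, right multiplication by a fixed element acts as a translation in the flow parameters, whence smoothness of $m$, and then of $i$, follows by the standard right-invariance argument.

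The step I expect to be the real obstacle is the cross-boundary compatibility packaged into the hypothesis that $\Phi$ is an atlas. Within each $\cG_{S_i}$ the maps $\varphi_X^Y$ are manifestly smooth charts, but as the base point $q$ crosses from one stratum to another the groupoid degenerates — for instance from a transitive, pair-type groupoid to a bundle of isotropy groups, as in Example~\ref{ex:grushin} — and there is no a priori reason the exponential-flow coordinates should match smoothly; indeed the naive gluing can fail, as Figure~\ref{fig:failure} illustrates. The point of the theorem is therefore to isolate this single compatibility as the verifiable condition: once the transition maps are known to be smooth, everything else is bookkeeping with right-invariant flows, and the only reason one cannot simply declare $\cG$ to be a manifold is precisely the possible failure of this cross-stratum smoothness.
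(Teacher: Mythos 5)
A preliminary point: the paper itself contains no proof of Theorem~\ref{thm:charts} — it is quoted from~\cite{nistor_integration}, and the surrounding text only records that the maps~\eqref{eq:chart} serve as charts (e.g.\ to check Hausdorffness later). So your proposal has to stand on its own, and it cannot simply be matched line-by-line against an in-paper argument.

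Your forward direction is essentially correct: given a Lie groupoid structure integrating $A$ and restricting to the $\cG_{S_i}$, the right-invariant lifts of sections of $A$ are smooth, the chosen ones are complete, the differential computation at $t=0$ (together with shrinking $\varepsilon$, which you should mention, since invertibility of the differential at $t=0$ only gives a local diffeomorphism on a small $B_\varepsilon\times U$) makes $\varphi_X^Y$ a chart, and compatibility of charts smooth for a fixed structure is automatic. Note also that the covering of $\cG$ by chart images is part of $\Phi$ being an atlas, so your $d$-connectedness argument belongs to this direction as well; fortunately it is direction-independent.

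The genuine gap is in the converse, at the smoothness of multiplication. Your one-line justification — ``right multiplication by a fixed element acts as a translation in the flow parameters'' — is both imprecise and insufficient. What right-invariance actually gives is that flows commute with right translations, so that for composable elements $g'=\varphi_X^Y(t,p)$ and $h=\varphi_{X'}^{Y'}(s,q)$ with $p=r(h)$ one has
$$
g'h \;=\; e^{X_m}\circ\dots\circ e^{X_1}\circ e^{t_nY_n}\circ\dots\circ e^{t_1Y_1}\circ e^{X'_{m'}}\circ\dots\circ e^{X'_1}\circ e^{s_{n}Y'_{n}}\circ\dots\circ e^{s_1Y'_1}u(q).
$$
For each \emph{fixed} $t$ the right-hand side is again a chart of $\Phi$ evaluated at $(s,q)$; this proves smoothness of each right translation $R_h$ and of the product with $t$ frozen, but not joint smoothness of $m$ in $(t,s,q)$. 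The atlas hypothesis gives smoothness of each \emph{pairwise} transition map, i.e.\ of the family of charts indexed by $t$ taken one at a time; joint smoothness in the index $t$ is equivalent to smoothness of the flows $(\tau,g)\mapsto e^{\tau Y_j}g$ for the atlas-induced structure across the strata, and this does not follow formally from pairwise-smooth transitions plus smoothness along each $\cG_{S_i}$. This cross-stratum joint regularity is precisely the technical heart of the theorem in~\cite{nistor_integration} (one can also reach it via a Bochner--Montgomery-type theorem once joint continuity of the flows is established, but some such argument must be supplied). Your closing paragraph, which declares that once the transition maps are smooth ``everything else is bookkeeping with right-invariant flows,'' is exactly where the proposal underestimates the statement: without the joint-smoothness argument, neither the smoothness of $m$ on $\cG^{(2)}$ nor the smoothness of the right-invariant vector fields for the new structure is actually established.
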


We state this theorem in order to emphasize that the maps~\eqref{eq:chart} are the charts of the Lie groupoid $\cG$ and hence can be used to prove Hausdorff properties of $\cG$.

\section{Analysis on Lie manifolds and Lie groupoids}

\label{sec:anal}

\subsection{Compatible structures}
\label{subsec:comp_str}

Theorem~\ref{thm:glue} and Serre-Swan theorem allow us to obtain a groupoid structure starting from a Lie manifold $(M,\cV)$. The biggest strata in this case is the interior $M_0$ and restriction $A_\cV|_{M_0}$ will be integrated to the pair groupoid $\cG_{M_0} \simeq M_0 \times M_0$. This construction and right invariance allows us to pull structures from the Lie manifold $(M,\cV)$ to the corresponding Lie groupoid.

Consider for example a metric $g_0$ defined on $M_0$. We can consider the pull-back of this metric to the Lie algebroid $\rho^* g_0$ via the anchor map. This allows to define the scalar product on the sublagebra of right invariant vector fields or one can consider it as a family of metrics on $\cG_q$ for $q\in M$. The problem is that this metric may not be well defined on all of $\cG$ because the anchor map fails to be a bijection on the boundary. In order to remedy this we need to consider a class of metric that we call \textit{compatible}, which are restrictions of a smooth metric $g$ on $A(\cG)$ to $M_0$ under the anchor map. In the language of Lie manifolds we can define them as follows. 

\begin{definition}
Let $(M,\cV)$ be a Lie manifold. A \textit{compatible} metric on $M_0$ is a metric $g_0$ such that, for any $q\in M$, we can choose the basis $X_1,\dots,X_n$ from Definition~\ref{def:lie} to be orthonormal with respect to this metric on a neighbourhood $O_q \cap M_0$ of $q$.
 \end{definition} 

Such metrics have very nice properties. With a compatible metric $(M_0,g_0)$ is a complete Riemannian manifold of infinite volume with bounded covariant derivatives of curvature. Under some additional assumptions one can also guarantee that the injectivity radius is positive~\cite{nistor_lie}. Manifolds of bounded geometry have many useful analytic properties. For example, the three definitions of Sobolev spaces (via connections, via orthonormal frames and via the Laplace-Beltrami operator) coincide~\cite{sobolev1,sobolev2}. 

In a very similar way we can consider constructions of other natural geometric objects on $\cG$. For example, since we have a metric on $A(\cG)$, we can construct a volume form as an element of $\Lambda^n A^*(\cG)$. Extending by right invariance gives a system of volume forms on $\cG_q$ which is an example of right Haar system.

\begin{definition}
A \textit{right Haar system of measures} for a locally compact groupoid $\cG$ is a family $(\lambda_q)_{q\in M}$, where $\lambda_q$ are Borel regular measures on $\cG$ with support on $\cG_q$ for every $q\in M$ and satisfying:
\begin{enumerate}
\item The continuity condition:
$$
M \ni q \to \lambda_q(\varphi) := \int_{\cG_{q}} \varphi(g)d\lambda_q(g)
$$
is continuous for every $\varphi\in C_c(\cG)$.
\item The invariance condition:
$$
\int_{\cG_{r(g)}} \varphi(hg)d\lambda_{r(g)}(h)= \int_{\cG_{d(g)}} \varphi(h)d\lambda_{d(g)}(h).
$$

\end{enumerate}

\end{definition}

In the very same spirit we can consider differential operators on $M$ whose lifts to $\cG$ are right invariant differential operators.

\begin{definition}
Let $(M,\cV)$ be a Lie manifold. The set of $\cV$-differential operators $\Diff_\cV(M)$ is the algebra of differential operators generated by compositions of vector fields from $\cV$ and multiplication by $C^\infty(M)$. In the frame described in the Definition~\ref{def:lie} elements of $\Diff_\cV(M)$ are locally generated by differential operators of the form
\begin{equation}
\label{eq:diff_el}
P =aX_{i_1}...X_{i_m}, \qquad a\in C^{\infty}(M), \qquad i_j\in\{1,\dots,n\}.
\end{equation}
The subset of  $\cV$-differential operators of order $m$ is denoted as $\Diff^m_\cV(M)$.
\end{definition}

To each element of $\cV$ we can associate a section $\sigma \in A(\cG)$, and to each section $\sigma$ we can associate a right invariant vector field $Z$ on $\cG$. Thus an element of the form~\eqref{eq:diff_el} will be mapped to an element
$$
\tilde{P} = (a\circ r) Z_{i_1}\dots Z_{i_m}.
$$
One can check that
$$
\tilde{P}(f\circ r)(g) = Pf(r(g)), \qquad \forall f \in C^\infty(M), \qquad \forall g \in\cG. 
$$

\begin{example}
If $G\rightrightarrows \id$ is a Lie group, then instead of a Haar system we have a single Haar measure, right invariant vector fields are the usual right invariant vectors fields on $G$. Even though there are no non-trivial operators on the base space $\id$, the Lie algebra $A(G)$ indeed contains interesting non-trivial objects. For example, right invariant differential operators restricted to $A(G) =T_{\id} G =\fg$ can be identified with the elements of the universal enveloping algebra $U(\fg)$.
\end{example}

\begin{example}
\label{ex:pair_op}
If we consider the pair groupoid $\cG = M\times M \rightrightarrows M$, where $M$ is a smooth manifold, then every smooth metric on $g$ is a compatible metric. Each $\cG_q$ is a copy of $M$ and the lift of the metric induces the same metric $g$ on each $\cG_q$. The same happens with other objects: right invariant vector fields are just copies of the same vector field on each $\cG_q$ and differential operators are lifted to the same differential operator on every $\cG_q$.
\end{example}

More generally it is possible to define a pseudo-differential operators and pseudo-differential calculus adapted to the Lie groupoid structure. A \textit{pseudo-differential operator} (PDO) $P$ on a Lie groupoid is a family of classical PDOs $P_q: C_c^\infty(\cG_q)\to C_c^\infty(\cG_q)$ pa\-ra\-met\-ri\-sed by points $q\in M$ satisfying certain conditions (see Definition 3.2 in~\cite{nistor_pdo}). First of all it must be a differentiable right invariant family. Secondly, if $P$ is a PDO, then $P_q$ have distribution kernels $k_q$ and \textit{support} of $P$ defined as
$$
\supp(P) = \overline{\bigcup_{q\in M}\supp(k_q)}
$$
must be a closed subset of $\{(g,g'),d(g) = d(g')\}$. It will be assumed that the \textit{reduced support} given by
$$
\{g'g^{-1}:(g,g')\in \supp(P)\}
$$
is a compact subset of $\cG$.

We will denote pseudo-differential operators of order $m$ as $\Psi^m(\cG)$ and as usual
$$
\Psi^\infty(\cG) = \bigcup_{m\in \Z}\Psi^m(\cG), \qquad \Psi^{-\infty} = \bigcap_{m\in \Z} \Psi^m(\cG).
$$
We can also define the principal symbol of a pseudodifferential $P\in \Psi^m(\cG)$ as an order $m$ homogeneous function $\sigma_m$ on $A^*(\cG) \setminus 0$:
$$
\sigma_m(P)(\xi) = \sigma_m(P_q)(\xi), \qquad \forall \xi \in A^*_q(\cG) = T_q^* \cG.
$$

The following result states that the symbol has the usual properties which allow to construct parametrixes.
\begin{theorem}[\cite{nistor_pdo}]
Let $\cG\rightrightarrows M$ be a Lie groupoid. Then $\Psi^\infty(\cG)$ is an algebra with the following properties:
\begin{enumerate}
\item The principal symbol map
$$
\sigma_m : \Psi^m(\cG) \to \cS^m_c(A^*_q(\cG))/\cS^{m-1}_c(A^*_q(\cG)).
$$
is surjective with kernel $\Psi^{m-1}(\cG)$.
\item If $P\in \Psi^m(\cG)$ and $Q\in \Psi^{m'}(\cG)$, then $PQ\in \Psi^{m+m'}(\cG)$ and satisfies $\sigma_{m+m'}(PQ)=\sigma_{m}(P)\sigma_{m'}(Q)$. Consequently $[P,Q]\in \Psi^{m+m'-1}(\cG)$ and
$$
\sigma_{m+m'-1}([P,Q])=\frac{1}{i}\{\sigma_m(P),\sigma_{m'}(Q)\},
$$
where the Poisson bracket is the one that comes from the Lie algebroid structure.
\end{enumerate}
\end{theorem}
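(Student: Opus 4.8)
The plan is to treat every $P\in\Psi^m(\cG)$ as what it is by definition: a right-invariant differentiable family $(P_q)_{q\in M}$ of classical pseudodifferential operators on the $d$-fibers $\cG_q$, equivalently a single distribution on $\cG$ that is conormal to the unit submanifold $u(M)$ and has compact reduced support. Each assertion then reduces to the corresponding classical fact for ordinary PDOs on a single manifold $\cG_q$, and the only genuinely groupoid-theoretic work is to check that right-invariance and the compact-reduced-support condition are preserved by the operations involved.

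First I would prove surjectivity of $\sigma_m$ together with the identification of its kernel. Fixing a smooth metric on $A(\cG)$, one obtains a groupoid exponential map from a neighbourhood of the zero section of $A(\cG)$ onto a neighbourhood of $u(M)$ in $\cG$, defined fiberwise. Given a homogeneous symbol $a\in\cS^m_c(A^*(\cG))$, I would quantize it by the standard oscillatory-integral recipe transported through this exponential and cut off near the units. Because the recipe is intrinsic to the fiber data, the resulting family is automatically right-invariant and has compact reduced support, so it defines an element of $\Psi^m(\cG)$, and the fiberwise symbol computation gives $\sigma_m$ of the result equal to $a$. For the kernel, if $\sigma_m(P)=0$ then each $P_q$ is classically of order at most $m-1$; since right-invariance and the support condition are untouched, $P\in\Psi^{m-1}(\cG)$.

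Next comes composition. On each fiber $P_qQ_q$ is a composition of classical PDOs, hence a classical PDO of order $m+m'$ with principal symbol $\sigma_m(P_q)\sigma_{m'}(Q_q)$. Right-invariance passes to the composite because conjugation by $R_g$ is an algebra homomorphism on fiber operators, and the reduced support of the composite is contained in the product of the two reduced supports, which is compact; hence $PQ\in\Psi^{m+m'}(\cG)$ and $\sigma_{m+m'}(PQ)=\sigma_m(P)\sigma_{m'}(Q)$, which in particular shows that $\Psi^\infty(\cG)$ is an algebra. Since principal symbols are scalar functions and therefore commute, $\sigma_{m+m'}([P,Q])=0$, forcing $[P,Q]\in\Psi^{m+m'-1}(\cG)$.

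The remaining identity for the subleading symbol of the commutator is where the real work lies. Fiberwise, the classical formula expresses the order-$(m+m'-1)$ symbol of $[P_q,Q_q]$ as $\tfrac{1}{i}$ times the canonical Poisson bracket of $\sigma_m(P_q)$ and $\sigma_{m'}(Q_q)$ on $T^*\cG_q$. The \emph{main obstacle} is to verify that, after restriction to the units (where $T^*_q\cG_q=A^*_q(\cG)$) and transport by right-invariance, this canonical symplectic bracket agrees with the linear Poisson bracket on $A^*(\cG)$ coming from the algebroid, namely the one satisfying $\{h_X,h_Y\}=-h_{[X,Y]}$ and $\{f\circ\pi,h_X\}=\rho(X)(f)\circ\pi$. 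I would check this by evaluating the canonical bracket on the linear functions $h_X,h_Y$ attached to right-invariant vector fields: the Hamiltonian flow of $h_X$ is generated by $X$, so the bracket of two such linear functions is again linear and equals $-h_{[X,Y]}$, the minus sign being precisely the right-invariance sign already flagged in the discussion of the dual algebroid. Matching the mixed brackets of base functions against linear functions then pins down the full correspondence and completes the proof.
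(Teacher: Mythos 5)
This theorem is stated in the paper only as a citation from \cite{nistor_pdo} and is never proved there, so the sole meaningful comparison is with the argument in that reference. Your proposal correctly reconstructs that argument --- fiberwise reduction to the classical calculus on each $\cG_q$, with right-invariance and compactness of the reduced support checked under quantization (transported through an exponential map of $A(\cG)$) and under composition (the reduced kernel of $PQ$ being the convolution $\kappa_P * \kappa_Q$, whose support lies in the product of two compact sets), together with the identification of the fiberwise canonical bracket with the Lie--Poisson bracket on $A^*(\cG)$ verified on the generating functions $h_X$ and pullbacks from the base --- which is essentially the same route as the original proof, so there is nothing to flag beyond the sign conventions you already acknowledge.
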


A PDO $P\in \Psi^m(\cG)$ is called \textit{elliptic} if $\sigma_m(P)$ does not vanish on $A^*(\cG) \setminus M$. Similarly to the classical pseudo-differential calculus it is possible to construct a parametrix for an elliptic operator of order $m$ and to invert it modulo elements of $\Psi^{-\infty}(\cG)$ (see~\cite[Section 3]{wunsh} for a conceptual introduction). For future reference it should be also emphasised that right invariant differential operators belong to $\Psi^\bullet(\cG)$. 

We now see that in the case of Lie manifolds we can lift differential operators from $(M,\cV)$ to invariant differential operators on $\cG$. More generally we can start with any Lie groupoid $\cG$ and use its structure to study operators generated by vector fields from $Lie(\cG)$. We can view the lift $\tilde{P}$ of an operator $P$ as a family of (pseudo-)differential operators $\tilde{P}_q$ acting on $C^\infty(\cG_q)$. We thus can consider the restrictions of those operators to various subgroupoids. In particular, in the context of stratified Lie groupoids the following definition will be of great importance.

\begin{definition}
Let $M$ be a stratified manifold and $M_0$ its interior. Consider a Lie groupoid $\cG\rightrightarrows M$ and assume that $\cG_{M_0}$ is a Lie subgroupoid. Then operators $P_{q}$, where $q\in \p M$ are called \textit{limit operators}.

\end{definition} 

Limit operators play a central role in the study of differential operators on Lie manifolds. For example, as we have already mentioned in the introduction, one can formulate necessary and sufficient conditions for Fredholmness in terms of the invertibility of limit operators. As consequence one can also deduce information about the essential spectrum of an operator $P$ from the spectrum of its limit operators~\cite{nistor_fred}.

\subsection{Sobolev spaces}

\label{subsec:sobolev}
Given a compatible metric on a Lie manifold $(M,\cV)$, we denote the corresponding volume form as $\mu_\cV$ and the corresponding Laplace operator as $\Delta$. The $L^2$-norms in this subsection are taken with respect to $\mu_\cV$. Recall that $M_0$ is complete when endowed with a compatible metric and hence $\Delta$ is essentialy self-adjoint. We can give three different definitions of Sobolev spaces via three different norms.

First let $\nabla$ be the covariant derivative of the associated Levi-Civita connection and $k\in \N$. We can define the norm
$$
\|u\|^2_{\nabla,H^k_\cV(M)} = \sum_{i=1}^k \|\nabla^i u\|^2_{L^2_\cV(M)}.
$$
If $\cX\subset \cV$ is a finite set of vector fields such that $C^\infty(M)\cX = \cV$, then we can define
$$
\|u\|^2_{\cX,H^k_\cV(M)} = \sum \|X_1X_2...X_i u\|^2_{L^2_\cV(M)}
$$
where the sum is taken over $i\in\{1,\dots,k\}$ and all possible $X_1,X_2,\dots X_k \in \cX$. Finally we can use functional calculus to define the norms
$$
\|u\|^2_{\Delta,H^k_\cV(M)} = \sum \|(1+\Delta)^{\frac{k}{2}} u\|^2_{L^2_\cV(M)}
$$

\begin{definition}
Sobolev spaces $H^k_\cV(M)$ are defined to be completions of $C_c^\infty(M_0)$ in one of the norms above.
\end{definition}

\begin{theorem}[\cite{sobolev}]
All three Sobolev norms are equivalent. Moreover different Sobolev spaces constructed using different compatible metrics or different choices of $\cX \subset \cV$ are equivalent as well. 
\end{theorem}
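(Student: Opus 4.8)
The plan is to prove the three norms are pairwise equivalent on $C^\infty_c(M_0)$, which by density gives equality of the completions, and then to deduce the independence statements from the same estimates. The guiding principle throughout is that although the generating vector fields degenerate at $\p M$, everything becomes uniformly controlled once read off in a frame of the algebroid $A_\cV \to M$ over the \emph{compact} manifold $M$: any tensor expressed in such a frame has smooth, hence globally bounded, coefficients together with all their $\cV$-derivatives. Combined with the fact, recalled in Subsection~\ref{subsec:comp_str}, that $(M_0,g_0)$ is complete of bounded geometry (bounded covariant derivatives of curvature and, by~\cite{nistor_lie}, positive injectivity radius), this places us squarely in the setting where the bounded-geometry Sobolev theory of~\cite{sobolev1,sobolev2} applies.

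First I would compare the frame norm $\|\cdot\|_{\cX,H^k_\cV(M)}$ with the covariant norm $\|\cdot\|_{\nabla,H^k_\cV(M)}$. Writing $\nabla_{X_i}X_j=\sum_k\Gamma_{ij}^k X_k$, the Christoffel symbols $\Gamma_{ij}^k$ are smooth functions on the compact $M$ when the compatible metric makes the $X_i$ orthonormal, and hence are bounded together with all their derivatives along $\cV$. Consequently each iterated covariant derivative $\nabla^i u$ can be expanded as a bounded-coefficient combination of the frame derivatives $X_{j_1}\cdots X_{j_\ell}u$ with $\ell\le i$, and conversely. Summing the resulting two-sided $L^2_\cV$-bounds over $i\le k$ yields the equivalence of these two norms.

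The hard part will be the comparison with the spectral norm $\|\cdot\|_{\Delta,H^k_\cV(M)}$, since this requires genuine elliptic information rather than algebra. Here I would invoke uniform elliptic estimates for the bounded-geometry Laplacian: a G\r{a}rding-type inequality gives $\|u\|_{\nabla,H^k_\cV(M)}\lesssim \|(1+\Delta)^{k/2}u\|_{L^2_\cV(M)}$, while the ellipticity of $\Delta$ together with the boundedness of the metric and its derivatives in the algebroid frame gives the reverse elliptic-regularity bound. For $k=2$ this is the statement $\|u\|_{H^2_\cV(M)}\sim\|\Delta u\|_{L^2_\cV(M)}+\|u\|_{L^2_\cV(M)}$; higher $k$ follow by iterating the estimate and using that $\Delta\in\Diff^2_\cV(M)$ has coefficients bounded with all $\cV$-derivatives. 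Essential self-adjointness of $\Delta$ on $C^\infty_c(M_0)$, already recalled above, is what makes $(1+\Delta)^{k/2}$ and its functional calculus well defined. This is exactly the content established in~\cite{sobolev1,sobolev2}, so I would either cite it directly or reproduce the standard parametrix argument adapted to $\Psi^\bullet(\cG)$ using the symbol calculus of Section~\ref{sec:anal}.

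Finally, the independence statements follow from the same mechanism together with compactness of $M$. For two compatible metrics $g_1,g_2$: both are smooth metrics on $M$, so on the compact $M$ there is a constant $C$ with $C^{-1}g_1\le g_2\le C g_1$, whence the volume forms and the corresponding $L^2_\cV$-norms are comparable; as the frame derivatives $X_{j_1}\cdots X_{j_\ell}u$ do not depend on the metric at all, the two $\cX$-norms are equivalent, and by the previous paragraphs so are all the associated Sobolev norms. For two finite generating sets $\cX,\cX'$ with $C^\infty(M)\cX=\cV=C^\infty(M)\cX'$, each element of $\cX'$ is a $C^\infty(M)$-combination of elements of $\cX$ and conversely; on compact $M$ these coefficient functions are bounded with all $\cV$-derivatives, so exactly as in the $\nabla$ versus $\cX$ comparison the iterated $\cX'$-derivatives are controlled by the iterated $\cX$-derivatives and vice versa, giving equivalence of the two frame norms. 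This completes all the required equivalences.
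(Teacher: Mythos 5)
The paper does not prove this statement at all: it is imported verbatim from~\cite{sobolev}, and the only hint the paper gives is the remark in Subsection~\ref{subsec:comp_str} that a compatible metric turns $(M_0,g_0)$ into a complete manifold of bounded geometry, on which the three definitions of Sobolev spaces coincide by~\cite{sobolev1,sobolev2}. Your sketch is a correct reconstruction of exactly that standard argument --- the frame-versus-$\nabla$ comparison via Christoffel symbols that remain smooth on the compact $M$ (the one point deserving explicit justification: that the Levi-Civita connection of a compatible metric preserves $\cV$, which follows from the Koszul formula since $\cV$ is closed under brackets and $g$ is smooth on $A_\cV$), the bounded-geometry elliptic estimates for the $(1+\Delta)^{k/2}$ norm, and compactness of $M$ for independence of the choices of metric and generating set --- so it matches the approach of the cited source rather than diverging from anything actually done in the paper.
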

In view of this theorem we will denote all three norms by $\|\cdot\|_{H^k_\cV(M)}$. We should note that one can define Sobolev spaces for every $s\in \R$ using interpolation.

In order to deal with the Laplace-Beltrami operator we need weighted Sobolev spaces. 

\begin{definition}
Fix a boundary hyperface $H$ of $M_0$ and a defining function $s_H$. Define
$$
s = \prod s^{a_H}_H,
$$
where $a_H\in \R$ and the product is taken over all hyperfaces of $M_0$. Then the weighted Sobolev spaces are defined as
$$
s H_\cV^k(M) = \{s u \, : \, u\in H_\cV^k(M)\}
$$
with the norm $\|s u\|_{s H_\cV^k(M)} = \|u\|_{H_\cV^k(M)}$.
\end{definition}

\begin{lemma}
\label{lemma:emb}
If $a_H>0$ for each hypersurface $H$ in the definition of $s$, then for any $\alpha>\alpha'$ and $0\leq m \leq k$ Sobolev space $s^\alpha H_\cV^k(M)$ is continuously embedded into $s^{\alpha'} H_\cV^{k-m}(M)$. 
\end{lemma}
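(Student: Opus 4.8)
The plan is to factor the claimed inclusion through the intermediate space $s^{\alpha'}H_\cV^k(M)$, splitting the problem into a \emph{weight-change} step followed by a \emph{regularity-lowering} step:
$$
s^\alpha H_\cV^k(M)\;\hookrightarrow\; s^{\alpha'}H_\cV^k(M)\;\hookrightarrow\; s^{\alpha'}H_\cV^{k-m}(M).
$$
For the second inclusion I would note that, by the very definition of the weighted norm, multiplication by $s^{\alpha'}$ is an isometry $H_\cV^j(M)\to s^{\alpha'}H_\cV^j(M)$ for every $j$; hence it suffices to establish the unweighted tower embedding $H_\cV^k(M)\hookrightarrow H_\cV^{k-m}(M)$. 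This is immediate from the $\cX$-description of the norms: since $\|u\|_{H_\cV^{k-m}(M)}^2$ is obtained from $\|u\|_{H_\cV^{k}(M)}^2$ simply by discarding the summands that involve more than $k-m$ vector fields, one has $\|u\|_{H_\cV^{k-m}(M)}\le\|u\|_{H_\cV^{k}(M)}$ on $C_c^\infty(M_0)$, and the inclusion extends by density.

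For the first inclusion, set $\beta:=\alpha-\alpha'>0$ and observe that an element $w=s^\alpha u$ of $s^\alpha H_\cV^k(M)$ can be rewritten as $w=s^{\alpha'}\bigl(s^\beta u\bigr)$. The embedding therefore reduces to the assertion that multiplication by $s^\beta$ is a bounded operator on $H_\cV^k(M)$: granting this, $\|w\|_{s^{\alpha'}H_\cV^k(M)}=\|s^\beta u\|_{H_\cV^k(M)}\lesssim\|u\|_{H_\cV^k(M)}=\|w\|_{s^\alpha H_\cV^k(M)}$. Because $C_c^\infty(M_0)$ is dense and $s^\beta$ is smooth and strictly positive on the interior $M_0$, multiplication by $s^\beta$ preserves $C_c^\infty(M_0)$, so it is enough to prove the bound on this dense subspace and extend by continuity.

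I would then estimate $\|s^\beta u\|_{H_\cV^k(M)}$ directly in the $\cX$-norm. By the Leibniz rule, each term $X_{i_1}\cdots X_{i_j}(s^\beta u)$ with $j\le k$ expands into a finite sum of products of an iterated $\cV$-derivative of $s^\beta$ with an iterated $\cV$-derivative of $u$ of order at most $j$. The heart of the matter---and the only genuine obstacle---is to control the $\cV$-derivatives of $s^\beta$. Here one uses that every $V\in\cV$ is tangent to each boundary hyperface $H$, so that $Vs_H$ vanishes on $H$ and therefore factors as $Vs_H=s_H\phi_H$ for a smooth function $\phi_H$, bounded on the compact $M$. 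Writing $s^\beta=\prod_H s_H^{\beta a_H}$ and differentiating gives $V\bigl(s_H^{\gamma}\bigr)=\gamma\,\phi_H\,s_H^{\gamma}$, and by induction every iterated $\cV$-derivative of $s^\beta$ equals $s^\beta$ times a bounded smooth function. Since $s^\beta$ is itself bounded (as $\beta a_H>0$ and each $s_H$ is bounded), every Leibniz term is pointwise dominated by a constant multiple of the corresponding iterated $\cV$-derivative of $u$, and summing yields $\|s^\beta u\|_{H_\cV^k(M)}\lesssim\|u\|_{H_\cV^k(M)}$. I expect this conormal estimate to be the crucial point, precisely because $s^\beta$ is in general \emph{not} smooth up to the boundary when $\beta a_H\notin\N$, so the boundedness of the multiplier cannot be read off from ordinary smoothness and must instead be extracted from the tangency of $\cV$ to $\p M$.
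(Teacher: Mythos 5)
Your argument is correct and is essentially the paper's own approach written out in full: the paper's proof is the single assertion that the lemma follows from the definitions together with the fact that $s^{\alpha-\alpha'}$ is a smooth bounded function, and your factorization through $s^{\alpha'}H_\cV^k(M)$, the tower embedding, and the bounded-multiplier estimate are precisely the suppressed details. If anything, your version is more careful than the paper's, since you correctly observe that $s^{\alpha-\alpha'}$ need not be smooth up to the boundary when $(\alpha-\alpha')a_H\notin\N$, so the boundedness of multiplication by $s^{\alpha-\alpha'}$ on $H_\cV^k(M)$ genuinely rests on the conormal estimate $Vs_H=s_H\phi_H$ coming from the tangency of $\cV$ to $\p M$, not on smoothness alone.
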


The proof is a direct consequence of definitions and the fact that under the assumptions $s^{\alpha-\alpha'}$ is a smooth bounded function.

It should be noted that although for the applications that we have in mind this is all the information we need, Sobolev spaces on Lie manifolds were extensively studied with many of the classical results extended to this setting. In particular, Relich-Kondrachov theorems for general $s^\alpha W^{k,p}$ spaces hold~\cite[ Theorem 4.6]{sobolev} as well as Gagliardo-Nirenberg-Sobolev inequalities~\cite[Proposition 3.14]{sobolev}. One can also define the traces~\cite[Theorem 4.3]{sobolev} and use them for solving boundary value problems in this setting (see~\cite{nistor_lie} for applications to solutions of boundary value problems of elliptic operators on polyhedral domains).

Given a Lie manifold we can consider the associated stratified Lie groupoid $\cG \rightrightarrows M$. Assume that $\cG_{M_0}$ can be identified with the pair groupoid $M_0 \times M_0 \rightrightarrows M_0$. As discussed in Example~\ref{ex:pair_op}, each $\cG_q$ for $q\in M_0$ can be identified with a copy of $M_0$, and given an operator $P$ on $M_0$, its lift to the pair groupoid will consist of copies of operators $P_q = P$ on each $\cG_q$. From this point of view Sobolev spaces $H^k_\cV(M)$ can be seen as natural functional spaces on $\cG_q$ for $q\in M_0$.

By analogy we can define associated Sobolev spaces on any $\cG_q$ for a stratified Lie groupoid. We start with a compatible metric and construct a Laplace-Beltrami operator $\Delta$ on $M_0$. Then we lift it to the Lie groupoid $\cG$ and consider it as a family of operators $\tilde\Delta = \Delta_q$ of $\cG_q$, $q\in M$. 

\begin{definition}
Sobolev space $H_\cV^k(\cG_q)$, $q\in M$ are completions of $C^\infty_c(\cG_q)$ in the norm
$$
\|u\|^2_{H_\cV^k(\cG_q)} = \sum \|(1+\Delta_q)^{\frac{k}{2}} u\|^2_{L^2_\cV(M)}
$$
\end{definition}

Similarly to the $H_\cV^k(M)$ spaces defined above we have the following result~\cite[Corollary 7.10]{nistor_general}.

\begin{proposition}
Spaces $H_\cV^k(\cG_q)$ do not depend on the choice of a compatible metric on $M_0$. If $P\in \Psi^m(\cG)$, then $P_q: H_\cV^\alpha(\cG_q) \to H_\cV^{\alpha-m}(\cG_q)$ are continuous operators for any $\alpha\in \R$.
\end{proposition}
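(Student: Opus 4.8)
The plan is to reduce both assertions to standard facts about manifolds of bounded geometry, applied fiberwise and uniformly in $q$. The starting observation is that for a fixed compatible metric $g_0$ on $M_0$, its pull-back $\rho^*g_0$ to $A(\cG)$ followed by right-invariant extension equips each fiber $\cG_q$ with a complete Riemannian metric. Because $M$ is compact and the generating fields $X_1,\dots,X_n$ of $\cV$ are orthonormal near every point, these fiber metrics have bounded geometry \emph{uniformly} in $q$: curvature and all its covariant derivatives are bounded by constants independent of $q$, and (by the arguments of~\cite{nistor_lie}, under the standing assumptions) the injectivity radius is bounded below uniformly. This uniform bounded geometry is exactly what lets the fiberwise Sobolev theory behave like the compact case.

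For the independence of the compatible metric, I would compare on each $\cG_q$ the Laplacian-based norm with a \emph{frame-based} norm built from the right-invariant lifts $Z_1,\dots,Z_n$ of a fixed generating set $\cX=\{X_1,\dots,X_n\}\subset\cV$ — the fiberwise analogue of $\|\cdot\|_{\cX,H^k_\cV(M)}$, with the $X_i$ replaced by the $Z_i$ acting on $C^\infty_c(\cG_q)$ and integration taken against the lifted volume. The equivalence of the frame-based and Laplacian-based norms is the fiberwise form of the equivalence-of-norms theorem of~\cite{sobolev1,sobolev2}, valid on any manifold of bounded geometry and, by the previous paragraph, uniformly in $q$. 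It then remains to see that a second compatible metric $g_0'$ only perturbs the frame-based norm by a bounded factor: the two volume forms differ by a density bounded above and below (by compactness of $M$), so the two $L^2$ norms on each fiber are uniformly equivalent, and the two orthonormal frames are related by a $GL_n$-valued change of frame that is bounded together with all its derivatives, again by compactness. Hence the frame-based norms — and therefore the spaces $H^k_\cV(\cG_q)$ — coincide up to equivalence, independently of the compatible metric.

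For the mapping property, I would pass through the calculus. By definition $H^\alpha_\cV(\cG_q)=(1+\Delta_q)^{-\alpha/2}L^2(\cG_q)$ with the obvious norm, so it suffices to show that
$$
(1+\Delta_q)^{(\alpha-m)/2}\,P_q\,(1+\Delta_q)^{-\alpha/2}
$$
is bounded on $L^2(\cG_q)$. The complex powers $(1+\Delta_q)^{s}$ belong to $\Psi^{2s}(\cG)$ — a Seeley-type statement for the groupoid calculus, using that $\Delta$ is elliptic, positive and essentially self-adjoint on the complete fibers. Invoking the composition rule from the theorem of~\cite{nistor_pdo} quoted above, the displayed operator lies in $\Psi^{(\alpha-m)+m+(-\alpha)}=\Psi^0(\cG)$, hence its fiber components are order-zero pseudodifferential operators, which act boundedly on $L^2(\cG_q)$ by the fiberwise $L^2$-boundedness theorem for the calculus. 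Composing the bounds yields continuity of $P_q:H^\alpha_\cV(\cG_q)\to H^{\alpha-m}_\cV(\cG_q)$, in fact with a norm estimate uniform in $q$.

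The main obstacle is not the soft functional-analytic bookkeeping but the two calculus inputs that force the non-compact fibers to behave like compact manifolds: (i) that $(1+\Delta_q)^s$ stays inside $\Psi^{2s}(\cG)$ for all real $s$, and (ii) the $L^2$-boundedness of $\Psi^0(\cG)$ on each fiber, both \emph{uniformly} in $q\in M$. This is precisely where the uniform bounded geometry and the structure of the calculus (right-invariance together with the compactness of the reduced support) must be used essentially to control the behaviour at infinity; once they are in place, extension to non-integer $\alpha$ follows by the usual complex-interpolation argument.
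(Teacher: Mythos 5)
The paper itself offers no proof of this proposition --- it is imported verbatim with a citation to Corollary 7.10 of the reference given there --- so your attempt has to be measured against the standard argument that citation points to. Your first half (independence of the compatible metric) is essentially sound: two compatible metrics give right-invariant frames on each fiber $\cG_q$ related by a change of frame of the form $a_{ij}\circ r$ with $a_{ij}\in C^\infty(M)$, bounded with all $\cV$-derivatives bounded by compactness of $M$, and the passage between frame-based and Laplacian-based fiber norms is the bounded-geometry equivalence you cite (with the one caveat that positivity of the injectivity radius is, as the paper notes, an \emph{additional} assumption, not automatic). The genuine gap is in the second half. Your mapping-property argument rests entirely on the claim that $(1+\Delta_q)^{s}\in\Psi^{2s}(\cG)$ for all real $s$. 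In the calculus fixed in this paper that claim is false: every $P\in\Psi^m(\cG)$ is required to have compact \emph{reduced support} $\{g'g^{-1}:(g,g')\in\supp(P)\}$, whereas the Schwartz kernel of $(1+\Delta_q)^{-1}$ --- and of any non-polynomial function of $\Delta_q$ --- decays but does not vanish away from the diagonal of the non-compact fiber $\cG_q$. This is precisely why the paper introduces the $C^*$-completion $\overline{\Psi}(\cG)$ and remarks that its importance ``comes from the fact that it contains resolvents of differential operators'': the resolvents lie in the completion, not in $\Psi^{-2}(\cG)$ itself. Consequently $(1+\Delta_q)^{(\alpha-m)/2}\,P_q\,(1+\Delta_q)^{-\alpha/2}$ cannot be placed in $\Psi^0(\cG)$ by the composition theorem you invoke, because two of the three factors are not elements of the algebra.

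The gap is repairable without changing your architecture, but it needs a different mechanism than Seeley. When $\alpha$ and $\alpha-m$ are non-negative even integers one stays inside the calculus: $Q=(1+\Delta)^{(\alpha-m)/2}P\in\Psi^{\alpha}(\cG)$, since the first factor is now a genuine ($\cV$-)differential operator; then take a parametrix $E\in\Psi^{-\alpha}(\cG)$ of the elliptic element $(1+\Delta)^{\alpha/2}$, so that $E(1+\Delta)^{\alpha/2}=I+R$ with $R\in\Psi^{-\infty}(\cG)$, and write $Q(1+\Delta)^{-\alpha/2}=QE+QR(1+\Delta)^{-\alpha/2}$. This is bounded on $L^2(\cG_q)$ because $QE\in\Psi^0(\cG)$ and $QR\in\Psi^{-\infty}(\cG)$ are bounded by the order-zero boundedness theorem (which \emph{is} available, uniformly in $q$), while $\|(1+\Delta_q)^{-\alpha/2}\|\leq 1$ by the spectral theorem --- note that spectral theory, not the pseudodifferential calculus, is what controls the negative powers. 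General real $\alpha$ and $m$ then follow by duality and complex interpolation of the scale $H^\alpha_\cV(\cG_q)$, which your identification of that scale with the domains of $(1+\Delta_q)^{\alpha/2}$ supports. Alternatively one can work in an enlarged calculus that adjoins rapidly decaying kernels and is closed under complex powers, but that is a nontrivial construction and is not the calculus defined in this paper.
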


\section{Groupoid representations and CNQ conditions}

\label{sec:cnq}

\subsection{Groupoid $C^*$-algbebras and their representations}

\label{subsec:c-alg}

Our goal for this section is to show how CNQ conditions appear from the representation theory of groupoid $C^*$-algebras. A good reference on $C^*$-algebras is the book~\cite{dixmier}.

Recall that a \textit{$C^*$-algebra} $A$ is a complex algebra endowed with an involution $*:A \to A$ and a complete norm satisfying for all $a,b\in A$
\begin{itemize}
\item $(ab)^* = b^*a^*$;
\item $\|ab\| \leq \|a\|\|b\|$;
\item $\|a^*a\| = \|a\|^2$.
\end{itemize} 
Given a complex Hilbert space $\cH$, the space of bounded linear operators $\cL(\cH)$ with involution given by taking the adjoint is the main example of a $C^*$-algebra.  The celebrated Gelfand-Naimark theorem states that every $C^*$-algebra is $*$-isometric to such an algebra. This also motivates the following definition
\begin{definition}
A \textit{representation} of a $C^*$-algebra $A$ is a $*$-morphism $\pi: A \to \cL(\cH_\pi)$ to the space of linear operators on a Hilbert space $\cH_\pi$. We say that a representation $\pi$ of $A$ is \textit{contractive}, if $\|\pi(a)\|\leq \|a \|$.
\end{definition}

If a $C^*$-algebra $A$ is not unital, we can always use it to construct a unital $C^*$-algebra $A'$ as follows. We can adjoin to $A$ the identity element to obtain $A'$ and define $(\lambda,a)^* = (\bar \lambda, a^*)$. Then the norm on $A$ extends in unique way to $A'$. Thus for all invertibility results below if $A$ is not unital, one has to replace $A$ by $A'$.

We can associate two main $C^*$-algebras to a Lie groupoid called the \textit{full} and \textit{reduced} Lie groupoid $C^*$-algebras. Let $C_c(\cG)$ be the space of continuous complex-valued compactly supported functions on $\cG$. If $\cG$ is endowed with a right Haar system, then this space has a convolution product defined as
$$
(\varphi_1 * \varphi_2 )(g) = \int_{\cG_{d(g)}}\varphi_1(gh^{-1}) \varphi_2(h) d\lambda_{d(g)}(h)
$$

\begin{example}
For a Lie group $G \rightrightarrows \id$ (Example~\ref{ex:lie}) we have $\cG_{d(g)} = G$ and $\lambda_{d(g)} = \lambda$ is just the right invariant Haar measure. The convolution product is the standard convolution product on $G$.
\end{example}

\begin{example}
For the trivial groupoid structure (Example~\ref{ex:man}) the right Haar system is given by the singletons $\lambda_q = \delta_q$, i.e.,
$$
\lambda_q(\varphi) = \varphi(q).
$$
Then the convolution product is nothing but the pointwise multiplication in $C_c(M)$.
\end{example}

\begin{example}
Consider the pair groupoid $M\times M \rightrightarrows M$ with $M$ smooth (Example~\ref{ex:pair}). Since each $\cG_q$ is diffeomorphic to $M$ under the range map, we can take as a Haar system copies of the same smooth volume form $\omega$ on $M$. Then the convolution product transforms to
$$
(\varphi_1 * \varphi_2)(q',q) = \int_M \varphi_1(q',q'')\varphi_2(q'',q)d\omega(q''),
$$
which is a formula for the integral kernel of composition of two integral operators with kernels $\varphi_1$, $\varphi_2$.
\end{example}

The space $C_c(\cG)$ with convolution product and involution
$$
\varphi^*(g) = \overline{\varphi(g^{-1})}
$$
becomes an associative $*$-algebra. One can also endow it with a natural norm
$$
\|\varphi\|_I = \max \left\{\sup_{q\in M} \int_{\cG_q}|\varphi|d\lambda_q, \sup_{q\in M} \int_{\cG_q}|\varphi^*|d\lambda_q\right\}.
$$
Completion of $C_c(\cG)$ with respect to $\|\cdot\|_I$ is denoted by $L^1(\cG)$ 

\begin{definition}
The \textit{full $C^*$-algebra} associated to $\cG \rightrightarrows M$, denoted as $C^*(\cG)$ is the completion of $C_c(\cG)$ in the norm
\begin{equation}
\label{eq:full_norm}
\|\varphi\| = \sup_{\pi}\| \pi(\varphi)\|,
\end{equation}
where $\pi$ ranges over all contractive representations of $C_c(\cG)$, i.e., for which
$$
\|\pi(\varphi)\| \leq \|\varphi\|_I
$$
\end{definition}

This $C^*$-algebra is difficult to handle. For this reason the reduced $C^*$-algebra is often considered instead.

\begin{definition}
Given a Lie groupoid $\cG \rightrightarrows M$, a right Haar system $\lambda$ and a point $q\in M$, the \textit{regular representation} $\pi_q: C^\infty(\cG) \to \cL(L^2(\cG_q,\lambda_q))$ is defined by the formula
$$
(\pi_q(\varphi)\psi)(g) = \varphi * \psi(g), \qquad \forall \varphi \in C_c(\cG).
$$
\end{definition}

\begin{definition}
The \textit{reduced $C^*$-algebra} $C_r^*(\cG)$ is defined as the completion of $C_c(\cG)$ in the norm
$$
\|\varphi\|_r = \sup_{q\in M}\|\pi_q(\varphi)\|.
$$
\end{definition}
Since regular representations are contractive, there is a natural $*$-homomorphism $C^*(\cG) \to C^*_r(\cG)$. The groupoid $\cG$ is called \textit{metrically amenable} if this homomorphism is a bijection.

We can also construct other subalgebras of $C^*(\cG)$ and $C^*_r(\cG)$ by taking restrictions to sub-groupoids. If $A$ is a $\cG$-invariant locally closed subset of $M$, then we can define $C^*(\cG_A)$ and $C^*_r(\cG_A)$ together with $*$-homomorphisms $\rho_A: C^*(\cG) \to C^*(\cG_A)$ and $\rho_{A,r}: C^*_r(\cG) \to C^*_r(\cG_A)$.

Consider now an operator $P\in \Psi^\infty(\cG)$ and its distributional kernel $k_q$. The action of $P$ is then given by
\begin{equation}
\label{eq:kernel}
(P\varphi)(g) = \int_{\cG_{d(g)}}k_{d(g)}(g,g')\varphi(g')d\lambda_{d(g)}(g')
\end{equation}
Since $P$ are right invariant we have
$$
P \circ R_h = R_h \circ P,
$$
where $R_h$ is the multiplication on $h$ from right. Using the invariant property of Haar systems and~\eqref{eq:kernel} this invariance conditions in terms of kernels reads as
$$
k_{d(h)}(gh,g'h) = k_{r(h)}(g,g').
$$
For this reason it makes sense to define the \textit{reduced kernel}
$$
\kappa_P(g) = k_{d(g)}(g,d(g))
$$
which encodes all of the information about the corresponding operator. If we consider the convolution with the reduced kernel, we obtain using the invariance property
\begin{align*}
(\kappa_P * \varphi) (g) &= \int_{\cG_{d(g)}}k_{r(h)}(gh^{-1},r(h))\varphi(h)d\lambda_{d(g)}(h) = \\
&= \int_{\cG_{d(g)}}k_{d(h)}(g,h)\varphi(h)d\lambda_{d(g)}(h) = (P\varphi)(g).
\end{align*}
Similarly if $P,Q\in \Psi^\infty(\cG)$ are operators with reduced kernels $\kappa_P, \kappa_Q$ in the same fashion one sees that
$$
\kappa_{PQ}(g) = (\kappa_P * \kappa_Q)(g)
$$
Thus $\Psi^m(\cG)$, where $m=\pm\infty,0$ form an involutive $*$-algebra similar to compactly supported functions. For explaining CNQ conditions we will need to transform $\Psi^0(\cG)$ into a $C^*$-algebra. In order to do this we note that $\Psi^{-\infty}(\cG)$ by construction consists of smooth functions (like in the classical pseudo-differential calculus) with compact support (because of the compactness of the reduced support). Hence $\Psi^{-\infty}(\cG)$ is exactly the involutive $*$-algebra $C_c^\infty(\cG)$ and the full and reduced $C^*$-algebras are its completions. 

The following theorem was proven in \cite[Theorem 4.2]{nistor_pdo}.

\begin{theorem}
Let $(\pi,\cH)$ be a bounded representation of $\Psi^{-\infty}(\cG)$. Then it can be extended to a bounded representation of $\Psi^0(\cG)$.
\end{theorem}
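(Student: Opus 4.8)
The plan is to realise $\Psi^0(\cG)$ inside the multiplier algebra of the $C^*$-completion $\bar J$ of the ideal $J := \Psi^{-\infty}(\cG) = C_c^\infty(\cG)$, and then to invoke the standard fact that a non-degenerate representation of a $C^*$-algebra extends canonically to its multiplier algebra.

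First I would record the algebraic skeleton. By the composition property $\Psi^m(\cG)\Psi^{m'}(\cG) \subset \Psi^{m+m'}(\cG)$ together with the compactness of the reduced support, $J$ is a two-sided $*$-ideal of $A := \Psi^0(\cG)$: for $P\in A$ and $k\in J$ both $\kappa_P * k$ and $k * \kappa_P$ lie in $J$. Hence each $P\in A$ determines an algebraic double centraliser $(L_P,R_P)$ of $J$, where $L_P(k) = \kappa_P * k$ and $R_P(k) = k * \kappa_P$; the centraliser identity $a * L_P(b) = R_P(a) * b$ for $a,b\in J$ is associativity of convolution, while $L_{PQ} = L_P L_Q$ and $(L_P)^* = L_{P^*}$ follow from $\kappa_{PQ} = \kappa_P * \kappa_Q$ and the involution $\varphi^*(g) = \overline{\varphi(g^{-1})}$. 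This already produces a $*$-homomorphism from $A$ into the algebra of algebraic multipliers of $J$ that restricts to the identity on $J$.

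The analytic core is to upgrade this to a continuous $*$-homomorphism $\Phi: A \to M(\bar J)$, i.e. to extend each $L_P$, $R_P$ to a bounded operator on $\bar J$. For the reduced completion this is transparent: since $P_q$ acts by left convolution with $\kappa_P$, associativity gives $\pi_q(\kappa_P * k) = P_q\,\pi_q(k)$ for every $q\in M$ and $k\in J$, whence
$$
\|\kappa_P * k\|_r = \sup_{q\in M}\|P_q\,\pi_q(k)\| \le \Big(\sup_{q\in M}\|P_q\|_{\cL(L^2(\cG_q))}\Big)\,\|k\|_r .
$$
The corresponding bound for the full norm follows in the same fashion through contractive representations. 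I expect the genuine obstacle to be precisely the \emph{uniform} $L^2$-bound $\sup_{q\in M}\|P_q\| < \infty$: the proposition recalled above only provides continuity of each $P_q$ on $L^2(\cG_q)$ separately, so one must promote this to a uniform estimate, which is a Calder\'on--Vaillancourt-type $L^2$-continuity theorem for order-zero operators in the groupoid calculus rather than a formal manipulation.

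Finally I would assemble the extension. Since $\pi$ is bounded it extends to a $*$-representation $\bar\pi$ of $\bar J$; replacing $\cH$ by the essential subspace $\cH_0 = \overline{\pi(J)\cH}$ and extending by zero on $\cH_0^\perp$, I may assume $\bar\pi$ is non-degenerate, so it extends uniquely to a unital $*$-representation of $M(\bar J)$ via $\bar\pi(m)\bar\pi(b)\xi = \bar\pi(mb)\xi$. Setting $\tilde\pi := \bar\pi \circ \Phi$ then gives a bounded representation of $\Psi^0(\cG)$, and because $\Phi$ restricts to the canonical inclusion $J \hookrightarrow M(\bar J)$ one checks $\tilde\pi|_J = \pi$, which is exactly the required extension.
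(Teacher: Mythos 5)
Your frame---realise $\Psi^0(\cG)$ inside the multiplier algebra of the completion of $J=\Psi^{-\infty}(\cG)$ and invoke the canonical extension of non-degenerate representations to multipliers---has a correct algebraic skeleton, and your final assembly step is standard and fine. The problem is that the analytic core, which you yourself flag as ``the genuine obstacle'', is exactly the content of the theorem, and the route you sketch for it does not close. First, you never prove the uniform bound $\sup_{q}\|P_q\|_{\cL(L^2(\cG_q))}<\infty$; this is not a side technicality but a statement of the same depth as the theorem (in the literature it is deduced \emph{from} the extension theorem by specialising to the regular representations, not used to prove it). Second, and more seriously, even granting that bound you would only control the \emph{reduced} norm. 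The theorem concerns an arbitrary bounded representation $\pi$ of $J$, so the completion $\bar J$ on which your multipliers must act is the full one, whose norm is a supremum over all bounded representations. Your claim that the full-norm bound ``follows in the same fashion through contractive representations'' is circular: bounding $\|\pi(\kappa_P*k)\|$ by $C\|\pi(k)\|$ for an abstract bounded representation $\pi$ is precisely the assertion that $\pi$ extends boundedly to $P$, i.e.\ the theorem itself; unlike the regular representations $\pi_q$, an abstract $\pi$ provides no concrete operator realizing $P$ through which a Calder\'on--Vaillancourt argument could run.

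The missing idea---and the one used in the source from which the paper quotes this statement (\cite{nistor_pdo}, Theorem 4.2; the paper itself does not reprove it)---is H\"ormander's square-root trick, which yields the bound for \emph{all} bounded representations at once using only the symbolic calculus. Given $P\in\Psi^0(\cG)$ and $\lambda>\|\sigma_0(P)\|_\infty$, the exactness of the principal-symbol sequence allows one to construct, by successive approximation, a self-adjoint $Q\in\Psi^0(\cG)$ with $\lambda^2-P^*P=Q^*Q+R$, where $R\in\Psi^{-\infty}(\cG)=J$. Then for $\eta=\pi(k)\xi$ in the essential subspace one computes, purely algebraically,
$$
\|\pi(\kappa_P*k)\xi\|^2=\lambda^2\|\eta\|^2-\|\pi(\kappa_Q*k)\xi\|^2-\langle\pi(R)\eta,\eta\rangle\leq\bigl(\lambda^2+\|\pi(R)\|\bigr)\|\eta\|^2,
$$
so the prescription $\tilde\pi(P)\,\pi(k)\xi:=\pi(\kappa_P*k)\xi$ is well defined and bounded by $\bigl(\lambda^2+\|\pi(R)\|\bigr)^{1/2}$, with no uniform-norm input whatsoever; the bound $\sup_q\|P_q\|<\infty$ you wanted is then a \emph{corollary} rather than an ingredient. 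If you wish to keep your multiplier-algebra packaging, this estimate is precisely what shows that $L_P$ extends boundedly to $\bar J$ in the full norm; inserting it turns your outline into a complete proof.
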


\begin{definition}
\label{def:psi_compl}
We denote by $\overline{\Psi}(\cG)$ the completion of $\Psi^0(\cG)$ in the full norm~\eqref{eq:full_norm}, where $\pi$ ranges over all extensions of bounded representations of $\Psi^{-\infty}(\cG)$.
\end{definition}
The importance of $C^*$-algebra $\overline{\Psi}(\cG)$ comes from the fact that it contains resolvents of differential operators.

Many algebraic properties of $C^*$-algebras can be studied via their ideals. Two sided ideals are $C^*$-subalgebras. A two sided ideal $I$ of a $C^*$-algebra is said to be \textit{primitive}, if it is a kernel of an irreducible representation of $A$. We denote the set of primitive ideals by $\Prim(A)$. Given a representation $\pi$ of $A$ its support is defined as
$$
\supp \pi = \{J \in \Prim(A)\, : J\neq A, \, \ker \pi \subset J  \}.
$$

\begin{definition}
Let $\cF$ be a set of representations of a $C^*$-algebra $A$. $\cF$ is called \textit{exhaustive} if
$$
\Prim(A) = \bigcup_{\pi \in \cF} \supp \pi
$$ 
\end{definition}

\begin{definition}
We say that a groupoid $\cG$ has \textit{Exel's property} if the set of regular representations is exhaustive in $C^*_r(\cG)$. A groupoid $\cG$ is said to have the strong \textit{Exel's property} if the set of regular representations is exhaustive in $C^*(\cG)$.
\end{definition}

Exel's property is one of the main ingredients in the Carvalho-Nistor-Qiao Fredholm conditions. All the groupoids that we consider in this article have Exel's property as follows from Proposition 3.10 of~\cite{nistor_fred}.

\subsection{Semi-Fredholm conditions}

\label{subsec:cnq}
In this subsection we consider Lie groupoids $\cG\rightrightarrows M$ for which there exists an open dense set $U_0\subset M$, such that $\cG_{U_0} \simeq U_0 \times U_0$. All regular representations $\pi_q$ for $q\in U_0$ are thus isomorphic and we denote them by $\pi_0$. If the Lie groupoid $\cG$ is Hausdorff, then the regular representation $\pi_0$ is non-degenerate~\cite{skandalis}, and we have an isomorphism $C_r^*(\cG_{U_0})\simeq \pi_0(C_r^*(\cG_{U_0}))\simeq \cK$, where $\cK$ are compact operators on $L^2(U_0)$. Indeed, elements of $\pi_0(C_r^*(\cG_{U_0}))$ can be identified with completion in the $L^2$-norm of integral operators with compact continuous kernels which are themselves compact.

\begin{theorem}[\cite{nistor_fred}]
\label{thm:pre_cnq}
Let $\cG \rightrightarrows M$ be a Lie groupoid. Assume that 
\begin{enumerate}
\item There exists and open dense $\cG$-invariant set $U_0 \subset M$;
\item The canonical projection $C^*_r(\cG)\to C^*_r(\cG_{M\setminus U_0})$ induces an isomoprhism
$$
C^*_r(\cG)/C^*_r(\cG_{U_0}) \simeq C^*_r(\cG_{M\setminus U_0});
$$
\item $\cG_{M\setminus U_0}$ has Exel's property.
\end{enumerate}
Then for any unital $C^*$-algebra $A$ containing $C_r^*(\cG)$ as an essential ideal and for any $a\in A$ we have that $\pi_0(a)$ is Fredholm if, and only if, the image of $a$ in $A/C_r^*(\cG)$ is invertible and all $\pi_q(a)$, $q\in M\setminus U_0$, are invertible. 
\end{theorem}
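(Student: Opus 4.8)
The plan is to reduce the Fredholm property of $\pi_0(a)$ to an invertibility statement in the quotient $C^*$-algebra $A/C^*_r(\cG_{U_0})$, and then to detect that invertibility by an exhaustive family of representations assembled from the boundary data. First I would record that the statement is well posed: since $C^*_r(\cG)$ is an essential ideal in $A$ we have $A\hookrightarrow M(C^*_r(\cG))$, and the non-degenerate regular representations $\pi_0$ and $\pi_q$ extend uniquely from $C^*_r(\cG)$ to $A$, so $\pi_0(a)$ and $\pi_q(a)$ make sense for every $a\in A$. Writing $\cH_0=L^2(U_0)$, recall from the discussion preceding the theorem that $\pi_0$ restricts to an isomorphism $C^*_r(\cG_{U_0})\xrightarrow{\ \sim\ }\cK(\cH_0)$. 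The crucial preliminary step is to prove that $\pi_0$ is faithful on $A$, equivalently that the ideal $C^*_r(\cG_{U_0})\cong\cK(\cH_0)$ is essential in $A$. Here one uses that $U_0$ is the single dense orbit carrying the pair groupoid $\cG_{U_0}\simeq U_0\times U_0$, so that $\pi_0$ is faithful on $C^*_r(\cG)$ (the orbit closure is all of $M$), together with the hypothesis that $C^*_r(\cG)$ is essential in $A$ and transitivity of essential ideals. Consequently $\pi_0$ identifies $A$ with a unital $C^*$-subalgebra $\pi_0(A)\subseteq\cL(\cH_0)$ containing $\cK(\cH_0)$ as an ideal, and carries $C^*_r(\cG_{U_0})$ onto $\cK(\cH_0)$.

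With faithfulness in hand the reduction is immediate. By Atkinson's theorem $\pi_0(a)$ is Fredholm iff it is invertible in the Calkin algebra $\cL(\cH_0)/\cK(\cH_0)$; by spectral permanence applied to the unital $C^*$-subalgebra $\pi_0(A)/\cK(\cH_0)$ of the Calkin algebra, this holds iff $\pi_0(a)$ is invertible modulo $\cK(\cH_0)$ inside $\pi_0(A)$; and since $\pi_0$ is an isomorphism sending $C^*_r(\cG_{U_0})$ onto $\cK(\cH_0)$, this is in turn equivalent to invertibility of the class $\dot a$ of $a$ in $B:=A/C^*_r(\cG_{U_0})$. Thus $\pi_0(a)$ is Fredholm iff $\dot a$ is invertible in $B$. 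Note that the easy implication, that invertibility of $\dot a$ forces $\pi_0(a)$ to be Fredholm, needs no faithfulness at all: one simply applies $\pi_0$ to an inverse of $a$ modulo $C^*_r(\cG_{U_0})$.

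It remains to detect invertibility of $\dot a$ in $B$ by the boundary representations. Using hypothesis 2, the ideal $C^*_r(\cG)/C^*_r(\cG_{U_0})$ of $B$ is canonically $C^*_r(\cG_{M\setminus U_0})$, with quotient $B/C^*_r(\cG_{M\setminus U_0})\cong A/C^*_r(\cG)$. A short computation with the convolution product shows that for $q\in M\setminus U_0$ the representation $\pi_q$ annihilates $C^*_r(\cG_{U_0})$, since $\cG$-invariance of $U_0$ forces every arrow produced by the convolution on $\cG_q$ to lie over the boundary; hence $\pi_q$ descends to $B$, where it coincides with the regular representation of $\cG_{M\setminus U_0}$ at $q$. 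I would then take the family $\cF$ consisting of all irreducible representations of $A/C^*_r(\cG)$, pulled back to $B$, together with the $\pi_q$, $q\in M\setminus U_0$, and check that $\cF$ is exhaustive for $B$: any irreducible representation of $B$ is either nonzero on the ideal, hence restricts to an irreducible representation of $C^*_r(\cG_{M\setminus U_0})$ and is covered by $\{\pi_q\}$ by Exel's property (hypothesis 3), or vanishes on the ideal, hence factors through $A/C^*_r(\cG)$ and is covered by the first family. This is the standard gluing of exhaustive families along $\Prim(B)=\Prim(C^*_r(\cG_{M\setminus U_0}))\sqcup\Prim(A/C^*_r(\cG))$. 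Invoking the criterion that an element of a unital $C^*$-algebra is invertible iff every member of an exhaustive family sends it to an invertible operator (\cite{nistor_fred}), I conclude that $\dot a$ is invertible in $B$ iff the image of $a$ in $A/C^*_r(\cG)$ is invertible and all $\pi_q(a)$, $q\in M\setminus U_0$, are invertible; combined with the previous paragraph this is exactly the claim.

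The main obstacle is the exhaustiveness step with its two prerequisites, which is precisely where hypotheses 2 and 3 enter: one needs the reduced sequence to be exact so that the boundary ideal is genuinely $C^*_r(\cG_{M\setminus U_0})$, and one needs the boundary regular representations to be exhaustive there, before the gluing with the quotient representations can be performed. A second delicate point is the faithfulness of $\pi_0$, since for \emph{reduced} groupoid algebras the regular representation on a dense orbit is not faithful merely by general nonsense; it is here that the density of $U_0$ and the Hausdorffness of $\cG$ must be used. Once these two ingredients are secured, Atkinson's theorem, spectral permanence and the Nistor--Prudhon invertibility criterion reduce the remainder of the argument to routine bookkeeping.
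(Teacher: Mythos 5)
Your proof is correct and takes essentially the same approach as the argument the paper relies on (it cites \cite[Theorem 4.6]{nistor_fred} rather than reproducing it): you hinge everything on the injectivity of $\pi_0$ — precisely the point the paper emphasizes right after the theorem — then reduce Fredholmness of $\pi_0(a)$ to invertibility in $A/C^*_r(\cG_{U_0})$ via Atkinson's theorem and spectral permanence, and detect that invertibility by gluing the boundary regular representations with the representations of $A/C^*_r(\cG)$ into an exhaustive family, exactly where hypotheses 2 and 3 enter in the cited proof. The one ingredient you flag but do not prove, faithfulness of $\pi_0$ on $C^*_r(\cG)$ via Hausdorffness and density of $U_0$, is treated the same way in the paper, which defers it to \cite{skandalis}.
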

For the proof of this Theorem see~\cite[Theorem 4.6]{nistor_fred}. We emphasize again that the proof uses the fact that $\pi_0$ is injective.

We will need a slightly weaker result since the operators we are interested in general are not Fredholm. We will see that they are however left semi-Fredholm. We have the following consequence of Theorem~\ref{thm:pre_cnq}.

\begin{corollary}
\label{cor:cnq1}
The statement of Theorem~\ref{thm:pre_cnq} remains true if we replace simultaneously ``Fredholm operators" with ``left (or right) semi-Fredholm" and ``invertible" with ``left (or right) invertible".
\end{corollary}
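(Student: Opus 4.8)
The plan is to deduce the one-sided statement directly from Theorem~\ref{thm:pre_cnq} by a self-adjointness trick, passing from $a$ to $a^{*}a$ in the left case (and to $aa^{*}$ in the right case, which is entirely symmetric; I treat only the left one). Two elementary $C^{*}$-algebraic facts drive the argument. First, in any unital $C^{*}$-algebra $B$ an element $x$ is left invertible if, and only if, $x^{*}x$ is invertible: if $yx=1$ then $x$ is bounded below in every faithful representation, so $x^{*}x\geq\|y\|^{-2}>0$, while conversely $(x^{*}x)^{-1}x^{*}$ is a left inverse. Second, a bounded Hilbert-space operator $T$ is left semi-Fredholm, i.e.\ $\dim\ker T<\infty$ and $\ran T$ is closed, if, and only if, its image in the Calkin algebra is left invertible; this is the one-sided Atkinson theorem, whose easy half is that $T$ restricted to $(\ker T)^{\perp}$ has a bounded inverse, giving $ST=I-P$ with $P$ the finite-rank projection onto $\ker T$.

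Now fix $a\in A$. Recall from the discussion preceding Theorem~\ref{thm:pre_cnq} that, since $\cG$ is Hausdorff and $U_{0}$ is dense, $\pi_{0}$ is injective and $\pi_{0}(C_{r}^{*}(\cG_{U_{0}}))=\cK$ is exactly the algebra of compact operators on $L^{2}(U_{0})$. Hence $\pi_{0}(a)$ is left semi-Fredholm if, and only if, its image in the Calkin algebra $\cL(L^{2}(U_{0}))/\cK$ is left invertible. Applying the first fact inside this Calkin algebra, and using that both the Calkin quotient and $\pi_{0}$ are $*$-homomorphisms, the latter is equivalent to invertibility of the image of $\pi_{0}(a^{*}a)=\pi_{0}(a)^{*}\pi_{0}(a)$ in the Calkin algebra, that is, to $\pi_{0}(a^{*}a)$ being Fredholm.

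At this point I apply Theorem~\ref{thm:pre_cnq} to the element $a^{*}a\in A$: $\pi_{0}(a^{*}a)$ is Fredholm if, and only if, the image of $a^{*}a$ in $A/C_{r}^{*}(\cG)$ is invertible and every $\pi_{q}(a^{*}a)$, $q\in M\setminus U_{0}$, is invertible. Because the quotient map $A\to A/C_{r}^{*}(\cG)$ and each $\pi_{q}$ are $*$-homomorphisms, the image of $a^{*}a$ is the product of the image of $a^{*}$ with that of $a$, and $\pi_{q}(a^{*}a)=\pi_{q}(a)^{*}\pi_{q}(a)$; invoking the first fact once more, now inside $A/C_{r}^{*}(\cG)$ and inside each $\cL(L^{2}(\cG_{q}))$, these two invertibility conditions become precisely left invertibility of the image of $a$ in $A/C_{r}^{*}(\cG)$ and left invertibility of every $\pi_{q}(a)$. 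Chaining the three equivalences gives exactly the left version of the corollary, and replacing $a^{*}a$ by $aa^{*}$ (with ``right invertible $\iff xx^{*}$ invertible'' and the right Atkinson theorem) gives the right version. The proof is essentially formal once the reduction to $a^{*}a$ is made; there is no deep obstacle, and the only point demanding care is keeping track of \emph{which} $C^{*}$-algebra each instance of ``$x$ left invertible $\iff x^{*}x$ invertible'' is applied in—the Calkin algebra, the quotient $A/C_{r}^{*}(\cG)$, and the fibre algebras $\cL(L^{2}(\cG_{q}))$—and ensuring the Atkinson characterisation is used with the correct ideal $\cK=\pi_{0}(C_{r}^{*}(\cG_{U_{0}}))$.
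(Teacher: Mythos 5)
Your proposal is correct and follows essentially the same route as the paper: the paper deduces the corollary from its Lemma~\ref{lemma:inv_cond} ($x$ left invertible $\iff$ $x^*x$ invertible) and Lemma~\ref{lemma:fred_cond} (one-sided Atkinson via the Calkin algebra), which are precisely your two driving facts, applied to $a^*a$ inside the Calkin algebra, the quotient $A/C^*_r(\cG)$, and the fibre algebras. Your write-up merely spells out the chaining of equivalences that the paper compresses into ``follows directly from Lemmas~\ref{lemma:inv_cond} and~\ref{lemma:fred_cond}.''
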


Indeed, this is a consequence of the following lemmas which are folklore and very likely to be written somewhere. We prove some of them for the sake of completeness.

\begin{lemma}
\label{lemma:inv}
Let $T: H_1 \to H_2$ is a bounded linear operator, where $H_1, H_2$ are Hilbert spaces. Then the following statements are equivalent:
\begin{enumerate}
\item $T$ is left invertible;
\item $\ker T = \{0\}$, $\ran T$ is closed;
\item There exists a constant $c>0$ such that
\begin{equation}
\label{eq:semibounded}
\|Tu\|_{H_2} \geq c\|u\|_{H_1}, \qquad \forall u\in H_1.
\end{equation}
\end{enumerate}  
\end{lemma}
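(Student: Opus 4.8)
The plan is to establish the three equivalences via the cycle $(1) \Rightarrow (3) \Rightarrow (2) \Rightarrow (1)$, each a standard Hilbert-space argument, so that no implication is proved in isolation.

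First, for $(1) \Rightarrow (3)$, I would start from a bounded left inverse $S : H_2 \to H_1$ with $ST = \id_{H_1}$ and simply estimate
$$
\|u\|_{H_1} = \|STu\|_{H_1} \leq \|S\|\,\|Tu\|_{H_2},
$$
which yields the coercivity bound \eqref{eq:semibounded} with constant $c = \|S\|^{-1}$. The only degenerate case is $H_1 = \{0\}$, where every statement holds trivially, so I may assume $S \neq 0$.

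For $(3) \Rightarrow (2)$, the bound forces $\ker T = \{0\}$ immediately, since $Tu = 0$ gives $c\|u\|_{H_1} \leq 0$. To see that $\ran T$ is closed, I would take a sequence $(Tu_n)$ converging in $H_2$ and apply the inequality in the form $\|u_n - u_m\|_{H_1} \leq c^{-1}\|Tu_n - Tu_m\|_{H_2}$; this shows $(u_n)$ is Cauchy in the complete space $H_1$, hence convergent to some $u$, and continuity of $T$ gives $Tu = \lim_n Tu_n$, placing the limit in $\ran T$.

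Finally, $(2) \Rightarrow (1)$ is the only step that invokes a genuine theorem rather than a direct estimate, and is where I expect the (mild) difficulty to sit. Since $\ran T$ is closed it is itself a Hilbert space, and the corestriction $T : H_1 \to \ran T$ is a continuous bijection (injectivity coming from $\ker T = \{0\}$); by the bounded inverse theorem its inverse $T^{-1} : \ran T \to H_1$ is bounded. Using closedness of $\ran T$ a second time, the orthogonal projection $P : H_2 \to \ran T$ is a well-defined bounded operator, so $S := T^{-1} P : H_2 \to H_1$ is bounded and satisfies $STu = T^{-1}(Tu) = u$ for every $u \in H_1$, i.e.\ $ST = \id_{H_1}$. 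The care required is only in correctly applying the open mapping theorem to the corestricted map and in noting that boundedness of $P$ relies on the range being closed; once both are in place the left inverse is explicit and the cycle closes.
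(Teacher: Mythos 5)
Your proof is correct and complete: the cycle $(1)\Rightarrow(3)\Rightarrow(2)\Rightarrow(1)$ covers all equivalences, each step is sound (including the use of the bounded inverse theorem on the corestriction $T:H_1\to\ran T$ and the boundedness of the orthogonal projection onto the closed range). The paper does not prove this lemma itself but defers to Section 4.5 of~\cite{aubin}; your argument is the standard one that such references contain, so it simply supplies what the paper leaves to the literature.
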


The proof can be found in Section 4.5 of~\cite{aubin}.

\begin{lemma}
\label{lemma:inv_cond}
An element $a$ of a $C^*$-algebra $A$ is left invertible if, and only if, $a^*a$ is invertible. Similarly $a\in A$ is right invertible if, and only if, $aa^*$ is invertible.
\end{lemma}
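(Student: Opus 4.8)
The plan is to prove the left-invertibility equivalence first; the right-invertible statement then follows immediately by passing to adjoints, since $ac=1$ holds if and only if $c^*a^*=1$, so $a$ is right invertible exactly when $a^*$ is left invertible, which by the first part is equivalent to invertibility of $(a^*)^*a^* = aa^*$. Throughout I would work in the unitization $A'$ when $A$ is non-unital, as in the remark preceding the lemma, so that ``invertible'' and ``left/right invertible'' refer to genuine one-sided inverses.

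For the easy direction, suppose $a^*a$ is invertible. Then setting $b = (a^*a)^{-1}a^*$ gives $ba = (a^*a)^{-1}(a^*a) = 1$, so $a$ is left invertible with explicit left inverse $b$.

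For the converse, suppose $a$ is left invertible, say $ba = 1$. The key observation is the identity $1 = 1^*1 = (ba)^*(ba) = a^*b^*ba$. Since $b^*b$ is positive with $\|b^*b\| = \|b\|^2$, the standard $C^*$-order estimate $b^*b \le \|b^*b\|\,1 = \|b\|^2\,1$ holds, and monotonicity of the map $x \mapsto a^*xa$ on positive elements gives $a^*b^*ba \le \|b\|^2\,a^*a$. Combining these yields $1 \le \|b\|^2\,a^*a$, i.e.\ $a^*a \ge \|b\|^{-2}\,1 > 0$. A positive element whose spectrum is bounded below by a strictly positive constant is invertible, so $a^*a$ is invertible, completing the equivalence.

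The only non-formal ingredients are the two order facts used in the converse: that a positive element $d$ satisfies $d \le \|d\|\,1$ (immediate from $\sigma(d)\subset[0,\|d\|]$ via the continuous functional calculus), and that $0 \le d \le e$ implies $a^*da \le a^*ea$ (positivity of the compression $x\mapsto a^*xa$). Both are routine in any $C^*$-algebra, so I do not anticipate a genuine obstacle; the whole content of the lemma is the slick computation $1 = a^*b^*ba \le \|b\|^2\,a^*a$, which converts a one-sided algebraic inverse into a two-sided spectral lower bound on $a^*a$.
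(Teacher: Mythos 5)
Your proof is correct, but it takes a genuinely different route from the paper's. The paper first reduces the right-invertible case to the left-invertible one (as you do), handles the easy direction with the same explicit inverse $(a^*a)^{-1}a^*$, but then proves the converse by invoking the Gelfand--Naimark theorem to realise $a$ as an operator on a Hilbert space $\cH$: it uses Lemma~\ref{lemma:inv} to get $\ker a = \{0\}$, shows $\ker(a^*a) = \ker a$ by an inner-product computation, and concludes invertibility of $a^*a$ from self-adjointness and the bounded inverse theorem. You instead stay entirely inside the abstract $C^*$-algebra and run an order-theoretic argument: from $ba = 1$ you get $1 = a^*b^*ba \le \|b\|^2\,a^*a$, hence $\sigma(a^*a) \subset [\|b\|^{-2},\infty)$ and $a^*a$ is invertible. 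Your approach buys several things: it avoids the representation altogether, so there is no need for the (unstated in the paper) spectral-permanence fact that invertibility of $a^*a$ in $\cL(\cH)$ implies invertibility in the subalgebra $A$; it also sidesteps a soft spot in the paper's argument, namely that ``zero kernel and zero cokernel'' only gives dense range, and one needs the range of $a^*a$ to be closed (which follows from the bounded-below estimate of Lemma~\ref{lemma:inv}, but the paper does not say so) before the bounded inverse theorem applies; and it yields a quantitative lower bound $a^*a \ge \|b\|^{-2}$. What the paper's route buys is concreteness and economy in context: it reuses the Hilbert-space Lemma~\ref{lemma:inv} already stated for the Fredholm analysis, keeping all the invertibility criteria in the same operator-theoretic language used throughout Section~\ref{subsec:cnq}.
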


\begin{proof}
It is obvious that if $a\in A$ is right invertible, then $a^*$ is left invertible. So it is sufficient to prove the condition for left invertibility. If $a^*a$ is invertible, then we can simply take $(a^*a)^{-1}a^*$ as the left inverse of $a$. 

To prove the other direction we use the Gelfand-Naimark theorem and assume that $a\in L(\cH)$ for some Hilbert space $\cH$ and that $a$ is left invertible. From the previous Lemma it follows that $\ker a = \{0\}$. Clearly $ \ker a \subset \ker (a^*a) $. We claim that this inclusion is actually an equality. Indeed, if $v\in \ker (a^*a)$, then for any $w\in \cH$
$$
0 = \langle w, a^*a v \rangle_\cH = \langle a w, a v \rangle_\cH  \iff av \in \ran(a)^\perp, 
$$
which is possible only if $av = 0$. Since $a^*a \in L(\cH)$ is a self-adjoint operator $\ran(a^*a)^\perp = \ker a^* a = \{0\}$. Hence $a^*a$ has zero kernel and cokernel and therefore it is invertible by the bounded inverse theorem.
\end{proof}

\begin{lemma}
\label{lemma:fred_cond}
A continuous linear operator $T: H_1 \to H_2$ between two Hilbert spaces is left (right) semi-Fredholm if, and only if, $T^* T$ (correspondingly $TT^*$) is Fredholm.
\end{lemma}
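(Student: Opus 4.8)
The plan is to reduce the ``right'' case to the ``left'' case and then analyse the single self-adjoint operator $S:=T^*T$. Since $T$ is right semi-Fredholm precisely when $T^*$ is left semi-Fredholm (by the closed range theorem $\ran T$ is closed iff $\ran T^*$ is, and $\dim\coker T=\dim\ker T^*$), and since $(T^*)^*T^*=TT^*$, it suffices to prove that $T$ is left semi-Fredholm if and only if $S=T^*T$ is Fredholm. The operator $S$ is bounded, self-adjoint and positive, and the elementary identity $\langle Su,u\rangle_{H_1}=\|Tu\|_{H_2}^2$ shows immediately that $\ker S=\ker T$. Because $S$ is self-adjoint, $\overline{\ran S}^{\perp}=\ker S$, so once $\ran S$ is closed one has $\dim\coker S=\dim\ker S$; hence $S$ is Fredholm exactly when $\ran S$ is closed and $\dim\ker S<\infty$. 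Using $\ker S=\ker T$, the whole statement therefore collapses to the equivalence: $\ran T$ is closed $\iff$ $\ran S$ is closed.

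First I would prove the forward implication. Assume $T$ is left semi-Fredholm, so $\dim\ker T<\infty$ and $\ran T$ is closed. Applying Lemma~\ref{lemma:inv} to the injective operator $T|_{(\ker T)^{\perp}}\colon (\ker T)^{\perp}\to H_2$ (whose range is the closed space $\ran T$) yields a constant $c>0$ with $\|Tu\|_{H_2}\ge c\|u\|_{H_1}$ for all $u\in(\ker T)^{\perp}$. For such $u$, Cauchy--Schwarz gives $\|Su\|_{H_1}\,\|u\|_{H_1}\ge\langle Su,u\rangle_{H_1}=\|Tu\|_{H_2}^2\ge c^2\|u\|_{H_1}^2$, hence $\|Su\|_{H_1}\ge c^2\|u\|_{H_1}$ on $(\ker T)^{\perp}=(\ker S)^{\perp}$. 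Since $S$ vanishes on $\ker S$ we have $\ran S=S\big((\ker S)^{\perp}\big)$, and Lemma~\ref{lemma:inv} applied to $S|_{(\ker S)^{\perp}}$ shows this range is closed. Thus $\ran S$ is closed and $S$ is Fredholm.

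For the converse I would start from $S=T^*T$ Fredholm, giving $\dim\ker T=\dim\ker S<\infty$ and $\ran S$ closed. By Lemma~\ref{lemma:inv}, $S|_{(\ker S)^{\perp}}$ is bounded below, hence an invertible positive self-adjoint operator on the Hilbert space $(\ker S)^{\perp}$; its spectrum is contained in $[m,\infty)$ for some $m>0$, so by the spectral theorem $\langle Su,u\rangle_{H_1}\ge m\|u\|_{H_1}^2$ for every $u\in(\ker S)^{\perp}$. Consequently $\|Tu\|_{H_2}^2=\langle Su,u\rangle_{H_1}\ge m\|u\|_{H_1}^2$, so $T$ is bounded below on $(\ker T)^{\perp}$ and Lemma~\ref{lemma:inv} again gives that $\ran T$ is closed; together with finiteness of $\dim\ker T$ this is exactly left semi-Fredholmness.

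The hard part will be the converse direction, specifically the passage from a mere norm lower bound $\|Su\|\ge c\|u\|$ (all that closedness of $\ran S$ directly provides) to the quadratic-form lower bound $\langle Su,u\rangle\ge m\|u\|^2$ needed to control $\|Tu\|$. This is precisely where the positivity and self-adjointness of $S=T^*T$ enter, through the spectral theorem; the forward direction, by contrast, needs only the trivial Cauchy--Schwarz estimate. I expect no further obstacles, as everything else is bookkeeping with $\ker$ and $\ran$ together with repeated use of Lemma~\ref{lemma:inv}.
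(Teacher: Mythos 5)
Your proof is correct, but it takes a genuinely different route from the paper's. The paper disposes of Lemma~\ref{lemma:fred_cond} in two lines: by Atkinson's theorem, left (right) semi-Fredholmness of $T$ is left (right) invertibility of its image in the Calkin algebra $L(H_1,H_2)/K(H_1,H_2)$, and since the quotient map is a $*$-homomorphism the claim is immediate from the $C^*$-algebraic Lemma~\ref{lemma:inv_cond}; this is essentially free given the $C^*$-machinery the paper has already set up, and it matches how the lemma is then used in Corollary~\ref{cor:cnq1}. You instead argue directly from the definition (finite-dimensional kernel plus closed range): using $\ker T^*T=\ker T$ and self-adjointness of $S=T^*T$ you collapse the statement to the equivalence ``$\ran T$ closed $\iff$ $\ran S$ closed'', and you prove both implications with the lower-bound criterion of Lemma~\ref{lemma:inv} applied on $(\ker T)^{\perp}$ (Cauchy--Schwarz one way, the spectral theorem the other). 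This is more elementary and self-contained --- no compact operators, no Atkinson, no Gelfand--Naimark --- and it in fact gives slightly more: the closed-range equivalence for $T$ and $T^*T$ holds with no finiteness assumptions and with explicit constants. Two small points to tighten: (i) when you view $S|_{(\ker S)^{\perp}}$ as an invertible operator \emph{on} $(\ker S)^{\perp}$, note that $(\ker S)^{\perp}$ is $S$-invariant (immediate from self-adjointness) and that surjectivity follows since $\overline{\ran S}=(\ker S)^{\perp}$ and $\ran S$ is closed; (ii) the spectral theorem can be bypassed entirely: for $u\in(\ker S)^{\perp}$ one has $\|Su\|^2=\langle T u, T(Su)\rangle \le \|T\|\,\|Tu\|\,\|Su\|$, hence $\|Tu\|\ge \|Su\|/\|T\|\ge (c/\|T\|)\|u\|$, which is the lower bound you need for $T$.
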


\begin{proof}
By Atkinson's theorem an operator is left (right) semi-Fredholm if it is left (right) invertible modulo compact operators. Compact operators $K(H_1,H_2)$ form a two-sided ideal in $L(H_1,H_2)$ and hence the quotient of spaces $L(H_1,H_2)/K(H_1,H_2)$ is a $C^*$-algebra known as the \textit{Calkin algebra}. Since the projection operator from $L(H_1,H_2)$ to the Calkin algebra is a $*$-homomorphism, the result is a direct consequence of Lemma~\ref{lemma:inv_cond}.
\end{proof}

Corollary~\ref{cor:cnq1} now follows directly from Lemmas~\ref{lemma:inv_cond} and~\ref{lemma:fred_cond}.

We are now ready to explain how the CNQ conditions are derived. The following Theorem is an analogue of Theorem 4.12 in~\cite{nistor_fred} and a direct consequence of Theorem~\ref{thm:pre_cnq} and Corollary~\ref{cor:cnq1}.

\begin{theorem}[Modified Carvalho-Nistor-Qiao conditions]
\label{thm:cnq}
Let $(M,\cV)$ be a compact Lie manifold and $\cG \rightrightarrows M$ be the associated Hausdorff Lie groupoid which satisfies conditions of Theorem~\ref{thm:pre_cnq} with $U_0 = M_0$. Let $\alpha\in\R$ and assume that $P_0 \in \Psi^m(M_0)$ is such that $P_0 = \pi_0(P)$ for some $P\in \Psi^m(\cG)$. Then
$$
P_0: H^\alpha_\cV(M) \to H^{\alpha-m}_\cV(M) 
$$
is left semi-Fredholm if $P_0$ is elliptic and
$$
P_q: H^\alpha(\cG_q) \to H^{\alpha-m}(\cG_q) \
$$
are left invertible for every $q\in M\setminus M_0$.
\end{theorem}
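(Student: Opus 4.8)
The plan is to reduce the statement to the order-zero case and then read it off from Corollary~\ref{cor:cnq1} applied to the $C^*$-algebra $A = \overline{\Psi}(\cG)$. First I would fix an invertible elliptic element of order $s$, namely $\Lambda^s = (1+\Delta)^{s/2}$, obtained from the functional calculus of the (essentially self-adjoint) Laplacian lifted to $\cG$. By the very definition of the Sobolev norms in Subsection~\ref{subsec:sobolev} the fiber operators $\Lambda^s_q = \pi_q(\Lambda^s)$ are isometric isomorphisms $H^\beta(\cG_q)\to H^{\beta-s}(\cG_q)$ for every $q\in M$ and every $\beta\in\R$, and similarly $\Lambda^s_0$ is an isomorphism $H^\beta_\cV(M)\to H^{\beta-s}_\cV(M)$.

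Next I would set
$$
\tilde P = \Lambda^{\alpha-m}\, P\, \Lambda^{-\alpha} \in \overline{\Psi}(\cG),
$$
an order-zero element of the completed calculus, so that $\tilde P_0 = \pi_0(\tilde P)$ maps $H^0_\cV(M)$ to itself. Since $\pi_0$ is multiplicative we have $\tilde P_0 = \Lambda^{\alpha-m}_0\, P_0\, \Lambda^{-\alpha}_0$, and $P_0 = (\Lambda^{\alpha-m}_0)^{-1}\tilde P_0 (\Lambda^{-\alpha}_0)^{-1}$; as left semi-Fredholmness is preserved under composition with isomorphisms on either side, $P_0\colon H^\alpha_\cV(M)\to H^{\alpha-m}_\cV(M)$ is left semi-Fredholm if and only if $\tilde P_0\colon H^0_\cV(M)\to H^0_\cV(M)$ is. The same conjugation, applied fiberwise, shows that $\tilde P_q$ is left invertible if and only if $P_q$ is, and that $\tilde P$ is elliptic if and only if $P$ is, because the principal symbols of the invertible factors $\Lambda^{\pm\bullet}$ are nowhere vanishing. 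Thus it suffices to treat $\tilde P$.

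Finally I would apply Corollary~\ref{cor:cnq1} with $A = \overline{\Psi}(\cG)$ and $a = \tilde P$. The algebra $\overline{\Psi}(\cG)$ is unital (it contains the identity) and contains $C^*_r(\cG)$ as an essential ideal, and $\cG$ is assumed to satisfy the hypotheses of Theorem~\ref{thm:pre_cnq} with $U_0 = M_0$. By hypothesis each limit operator $\tilde P_q = \pi_q(\tilde P)$, $q\in M\setminus M_0$, is left invertible, so the second condition of the corollary holds. For the first condition I would use that negative-order operators with compact reduced support lie in $C^*_r(\cG)$, so that the principal symbol descends to an isomorphism of $A/C^*_r(\cG)$ with the commutative algebra of principal symbols on $A^*(\cG)\setminus M$, sending the class of $\tilde P$ to $\sigma_0(\tilde P)$. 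Because this quotient is commutative, left invertibility of the image coincides with its invertibility, i.e. with the non-vanishing of $\sigma_0(\tilde P)$, which is exactly ellipticity of $\tilde P$. Both conditions of Corollary~\ref{cor:cnq1} being verified, $\pi_0(\tilde P) = \tilde P_0$ is left semi-Fredholm, and hence so is $P_0$.

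The main obstacle is the $C^*$-algebraic bookkeeping in the last paragraph: one must justify that $\overline{\Psi}(\cG)/C^*_r(\cG)$ really is the commutative principal-symbol algebra — equivalently that $\Psi^{-1}(\cG)\subset C^*_r(\cG)$ — and that $C^*_r(\cG)$ is an essential ideal in $\overline{\Psi}(\cG)$; these are structural facts about the groupoid pseudodifferential calculus imported from the cited references, and they quietly rely on the groupoids in question being metrically amenable so that the full and reduced completions coincide. Everything else is the purely formal transfer of semi-Fredholmness, left invertibility, and ellipticity through the invertible factors $\Lambda^{\pm\bullet}$.
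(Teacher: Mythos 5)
Your proposal is correct and follows essentially the same route as the paper: reduce to order zero by conjugating with powers of $(1+\Delta)$ (your $\tilde P = \Lambda^{\alpha-m}P\Lambda^{-\alpha}$ is exactly the paper's $a=(1+\Delta)^{(s-m)/2}P(1+\Delta)^{-s/2}$), then apply Corollary~\ref{cor:cnq1} with $A=\overline{\Psi}(\cG)$, the limit-operator condition being the hypothesis. The only cosmetic difference is the final step: where you invoke the symbol exact sequence identifying $\overline{\Psi}(\cG)/C^*_r(\cG)$ with the commutative symbol algebra, the paper argues directly by constructing a parametrix for the elliptic element and inverting it modulo $\Psi^{-\infty}(\cG)\subset C^*_r(\cG)$ --- the same fact that underlies the exact sequence you cite, and both arguments rest on the amenability point you correctly flag.
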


\begin{proof}
The idea of proof as one can see is to consider a PDO $P_0\in \Psi^m(M_0)$ as an operator $\pi_0(P)$, where $P\in \Psi^m(\cG)$. Then we can apply Theorem~\ref{thm:pre_cnq} to $P$. There are however some technical issues. First of all we need to reformulate the question of determining whether $\pi_0(P)$ is left semi-Fredholm as a question about elements of some $C^*$-algebra. For this reason we use a smooth metric on $A(\cG)$ and construct the corresponding right invariant Laplace operator $\Delta$. One then can replace $P$ with 
$$
a= (1+\Delta)^{(s-m)/2}P(1+\Delta)^{-s/2}.
$$

Operator $a$ now belongs to $\Psi^0(\cG)$. Note that $(1+\Delta_q)^{1/2}$ by the definitions is an isometry between $H^k(\cG_q)$ and $H^{k-1}(\cG_q)$. Hence everything we say about $\pi_q(a)$ can be transformed to statements about $\pi_q(P)$ after a suitable change of functional spaces. We view $a$ as an element of the completion $\overline{\Psi}(\cG)$ of $\Psi^0(\cG)$ from Definition~\ref{def:psi_compl}. We can now apply Theorem~\ref{thm:pre_cnq} and Corollary~\ref{cor:cnq1} with $A = \overline\Psi(\cG)$ (see Definition~\ref{def:psi_compl}). We obtain that $\pi_0(a)$ is left semi-Fredholm if, and only if, $\pi_q(a): L^2(\cG_q) \to L^2(\cG_q)$ are left invertible and the image of $a$ is left invertible in $\overline{\Psi}(\cG)/C^*_r(\cG)$. left invertibility of $\pi_q(a)$ is a part of the statement. So it only remain to prove left invertibility of $a$ in $\overline{\Psi}(\cG)/C^*_r(\cG)$. The fact that $P$ is elliptic implies that $a$ is elliptic. But then we can use the symbolic calculus for constructing a parametrix of an elliptic operator and invert it modulo $\Psi^{-\infty}(\cG) \subset C^*_r(\cG)$. Hence $a$ is invertible in $\overline{\Psi}(\cG)/C^*_r(\cG)$ and in particular left invertible.
\end{proof}

\begin{remark}
\label{rem:strat_group}
In~\cite{nistor_fred} the authors introduce a special class of groupoids for which the conditions of Theorem~\ref{thm:pre_cnq} are satisfied which they call \textit{(amenable) stratified submersion groupoids}. Those are Lie groupoids $\cG \rightrightarrows M$ for which $M$ has a $\cG$-invariant stratification
$$
\emptyset \subset U_N \subset U_{N-1} \subset \dots \subset U_0 \subset M 
$$
with $U_0$ open dense and for which moreover the restrictions of $\cG$ to each strata in $M\setminus U_0$ are certain fibered pull-back groupoids of amenable Lie group bundles. All the examples in this article are of this form since on the boundary we will always have Lie group bundles of solvable Lie groups.   
\end{remark}

\section{Outline of the method and a 1D model example}

\label{sec:1D}

Let us sum up all of the results and explain the rough informal algorithm for finding closure of singular elliptic operators using CNQ-conditions. Assume that we are given a differential operator $P$ of order $k$ on a manifold $M$ with boundary or some other singularity that we denote by $\cZ$. Let $\omega$ be a volume form on $M$ smooth outside $\cZ$. Let $M_0 = M\setminus \cZ$. In order to find the closure of $P$ defined on $C^\infty_c(M_0)$ in $L^2(M,\omega)$ topology one should follow the following steps:
\begin{enumerate}
\item Represent operator $P$ as an operator $s_{\cZ}^{-1}\Diff^k_\cV(M)$, where $(M,\cV)$ is a Lie manifold that extends $M_0$ and $s_Z$ is some function of the defining function of the singular set $\cZ$;
\item Find a compatible volume $\mu_\cV$ and represent $L^2(M_0,\omega)$ as $w L^2_\cV(M)$, where $w$ is a weight function;
\item Write down the continuous operator $\tilde{P} = s_{\cZ} w^{-1} P w:H^k_{\cV}(M) \to L^2_{\cV}(M)$;
\item Check first that $w^{-1}H^k_{\cV}(M)$ is continuously embedded in $L^2(M_0,\mu)$ as required by Proposition~\ref{prop:mendoza};
\item Check that there is a Hausdorff Lie groupoid integrating $A_\cV$ satisfying conditions of Theorem~\ref{thm:pre_cnq};
\item Write down the limit operators $\tilde P_q$ for $q\in \cZ$ and prove that they are invertible. Then from the previous point, Theorem~\ref{thm:cnq} and Proposition~\ref{prop:mendoza} it will follow that $D(\overline{P}) = w^{-1}H^k_{\cV}(M)$.
\end{enumerate}

\begin{remark}
This is just the outline of the method although in this paper we follow it word-by-word. In practice one often has to perform other actions, like doubling domains, performing blow-ups and others. 
\end{remark}

Let us illustrate the method on a relevant 1D example that will be useful for us in the study of Grushin points. Consider
\begin{equation}
\label{eq:1d_example}
\Delta = \p_x^2 -\left(\frac{3}{4}+\alpha\right) \frac{1}{x^2(1-x)^2}
\end{equation}
defined on $(0,1)$. Here $\alpha \in \R$ is an arbitrary fixed parameter. Operator $\Delta$ is a compact version of the inverse-square potential. Note that there is a symmetry $x\mapsto 1-x$, which allows to concentrate our discussion only on one of the ends.

Suppose that we are interested in the closure of this operator defined on $D(\Delta)=C^\infty_c(0,1)$ in the standard $L^2$-topology. Function $x$ is a defining function for the left boundary and function $(1-x)$ is a defining function for the right boundary. Thus we define $s=x(1-x)$. We have
$$
s^2 \Delta = x^2(1-x)^2 \p_x^2 -\left(\frac{3}{4}+\alpha\right).
$$
The Lie manifold structure $\cV$ is given by a unique vector field
$$
X = x(1-x)\p_x.
$$
Note that locally at the boundary $X$ is diffeomorphic to $x\p_x$. Hence we see that $\Delta\in s^{-2}\Diff_\cV(M)$. 

A compatible metric can be chosen to be 
$$
g = \frac{1}{x^2(1-x)^2}dx^2 
$$
and the corresponding volume form as
$$
\omega = \frac{dx}{s}.
$$
If we want the operator $\tilde P$ to have range in the standard space $L^2(0,1)$, then $s^2 \Delta$ should have range in $s^2 L^2(0,1)$ or equivalently in $s^{\frac{3}{2}} L^2_\cV(0,1)$. Now we need to get rid of the weight via conjugation by the $s^{\frac{3}{2}}$ factor. We find a slightly more general formula
\begin{align*}
\tilde P &=s^{2-\gamma}\Delta s^{\gamma} = \\
&=x^2(1-x^2) \p_x^2 + 2\gamma(1-2x) x(1-x) \p_x + \\
&+ \gamma(\gamma-1 + 2(x-1)x(2\gamma-1))-\left(\frac{3}{4}+\alpha\right) = \\
&= X^2 + (2\gamma-1)(1-2x)X+\gamma(\gamma -1+ 2(x-1)x(2\gamma-1)) -\left(\frac{3}{4}+\alpha\right),
\end{align*}
where $\gamma \in \R$.

Operator $\tilde P:H^2_\cV(0,1)\to L^2_\cV(0,1)$ is a continuous operator and hence $\Delta$ is a continuous operator from $s^{\frac{3}{2}} H^2_\cV(0,1)$ to $s^{-\frac{1}{2}}L^2_\cV(0,1) \simeq L^2(0,1) $ and the former is continuously embedded into the latter by Lemma~\ref{lemma:emb}. So by Proposition~\ref{prop:mendoza} we only need to check that $\tilde P:H^2_\cV(0,1)\to L^2_\cV(0,1)$ is left semi-Fredholm.

We will do this using CNQ conditions. The first step is to verify that there exists a Hausdorff Lie groupoid $\cG \rightrightarrows [0,1]$ integrating $A_\cV$ and such that $\cG_{(0,1)} \simeq (0,1)\times(0,1)$. There are several ways to do this. We will use Example~\ref{example:action_r} and the gluing Proposition~\ref{prop:glue}. The restriction $A_\cV|_{[0,\varepsilon)}$ coincides with the restriction of the Lie algebroid from Example~\ref{example:action_r} to the same semi-interval. Hence we can take the reduction of the action groupoid from Example~\ref{example:action_r} to the interval $[0,\varepsilon)$ as integration of $A_\cV|_{[0,\varepsilon)}$. The symmetry argument allows us to use the same action groupoid for integrating $A_\cV|_{(1-\varepsilon,1]}$. By Proposition~\ref{prop:glue} we can glue those groupoid to the pair groupoid $(0,1)^2 \rightrightarrows (0,1)$ to obtain a Hausdorff Lie groupoid $\cG$ integrating $A_\cV$. Moreover this Lie groupoid is a stratified submersion groupoid as stated in Remark~\ref{rem:strat_group}. Restrictions $\cG_0$, $\cG_1$ are just $\R$, which are amenable. Thus all the conditions of Theorem~\ref{thm:cnq} are verified and we can use the CNQ conditions in order to prove that $\tilde P$ is left semi-Fredholm. 

Since $\tilde P$ is elliptic in $(0,1)$, by Theorem~\ref{thm:cnq} it will be left semi-Fredholm if, and only if, both limit operators $\tilde P_0$ and $\tilde P_1$ are left invertible. The limit operators in the case when the restriction $\cG|_{\p M}$ is a bundle of Lie groups can be computed by replacing the generators of $\cV$ by the corresponding generators of the Lie algebra. For example, in our case for the left boundary point it means replacing $X=x\p_x$ with $Z=\p_y$, $y\in \R$ and evaluating the rest of functions at $x=0$. We only concentrate on the left boundary point, since the right boundary points is handled exactly in the same way via symmetry. We find for $\gamma =3/2$:
\begin{equation}
\label{eq:op_1d_ex}
\tilde P_0 = Z^2 + 2 Z - \alpha = \p_y^2 +2\p_y -\alpha.
\end{equation}
We only need to check that $\tilde P_0:H^2(\R)\to L^2(\R)$ is left invertible. In order to do this we use a slight generalisation of Lemma~\ref{lemma:inv} that will be also useful in the general case.

\begin{lemma}
\label{lemm:semibound}
Let $H_1$, $H_2$ be Hilbert spaces and $T:D(T) \subset H_1 \to H_2$ a closed operator. T is injective with closed range if, and only if, there exists a constant $c>0$ such that
$$
\|Tu\|_{H_2} \geq c\|u\|_{H_1}, \qquad \forall u \in D(T).
$$
\end{lemma}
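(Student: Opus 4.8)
The plan is to prove the two implications separately, following closely the bounded-operator case recorded in Lemma~\ref{lemma:inv}, but with one substitution: the role played there by completeness of the ambient Hilbert space is here played by completeness of the graph-normed domain, which is exactly what the hypothesis that $T$ is closed provides. First I would treat the direction in which the coercive estimate $\|Tu\|_{H_2}\geq c\|u\|_{H_1}$ is assumed. Injectivity is immediate, since $Tu=0$ forces $c\|u\|_{H_1}\leq 0$ and hence $u=0$, so $\ker T=\{0\}$. For closedness of the range I would take any sequence $(u_n)\subset D(T)$ with $Tu_n\to v$ in $H_2$. As $(Tu_n)$ converges it is Cauchy, and the estimate gives $\|u_n-u_m\|_{H_1}\leq c^{-1}\|Tu_n-Tu_m\|_{H_2}$, so $(u_n)$ is Cauchy in $H_1$ and converges to some $u\in H_1$. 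Then $(u_n,Tu_n)\to(u,v)$ in $H_1\times H_2$, and since $T$ is closed its graph is closed; hence $u\in D(T)$ and $Tu=v$. Thus $v\in\ran T$, and $\ran T$ is closed.

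For the converse, assume $T$ is injective with closed range. I would endow $D(T)$ with the graph norm $\|u\|_T^2=\|u\|_{H_1}^2+\|Tu\|_{H_2}^2$; the statement that $T$ is closed is precisely the statement that $(D(T),\|\cdot\|_T)$ is complete, hence a Banach space. The range $\ran T$, being a closed subspace of the Hilbert space $H_2$, is likewise a Banach space. The map $T\colon (D(T),\|\cdot\|_T)\to\ran T$ is linear and bounded, since $\|Tu\|_{H_2}\leq\|u\|_T$, and it is a bijection by injectivity together with the definition of its range. By the bounded inverse theorem its inverse is bounded, so there is a constant $C>0$ with $\|u\|_T\leq C\|Tu\|_{H_2}$ for every $u\in D(T)$. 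Since $\|u\|_{H_1}\leq\|u\|_T$, this yields the desired inequality with $c=1/C$.

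I do not expect any genuine obstacle: the argument is standard functional analysis. The only point requiring care is the converse, where the open mapping (equivalently bounded inverse) theorem demands that both source and target be complete. Completeness of the source is supplied exactly by the hypothesis that $T$ is closed, and completeness of the target by the assumption that $\ran T$ is closed; this is the sole place where each hypothesis is actually consumed, and it is worth flagging explicitly so the reader sees why the generalisation from the bounded case of Lemma~\ref{lemma:inv} goes through unimpeded.
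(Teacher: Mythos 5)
Your proof is correct and complete. Note that the paper does not actually prove this lemma at all: it simply refers to \cite[Proposition 2.14]{raymond}, so there is no in-paper argument to compare against; what you have written is the standard argument that such a reference would contain, and it can stand as a self-contained proof. Both directions are handled properly. In the direction assuming the estimate, you correctly combine the coercivity bound (applied to differences $u_n-u_m$, using linearity of $D(T)$ and $T$) with closedness of the graph to conclude that any limit of a sequence $Tu_n$ lies in $\ran T$. In the converse direction you correctly isolate the two completeness facts that the bounded inverse theorem consumes: $(D(T),\|\cdot\|_T)$ is complete precisely because $T$ is closed (the map $u\mapsto(u,Tu)$ is an isometry onto the graph, which is a closed subspace of $H_1\times H_2$), and $\ran T$ is complete because it is a closed subspace of the Hilbert space $H_2$. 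No gaps.
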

This is a well known result and a proof can be found in~\cite[Proposition 2.14]{raymond}.

We apply this lemma to $\tilde P_0$. Using the fact that the Fourier transform is an isometry between $L^2$ spaces we find in the frequency domain
$$
\| \widehat{\tilde P_0 u}\|_{L^2(\R)} = \int_\R |\xi^2-2i\xi +\alpha|^2 |\hat{u}(\xi)|^2 d\xi = \int_\R \left((\xi^2 +\alpha)^2 + 4\xi^2\right) |\hat{u}(\xi)|^2 d\xi 
$$
Since expression in the brackets is a sum of two non-negative functions, it will be bounded by a constant if, and only if, the whole polynomial has no zeros. It is easy to see, that this is indeed the case if, and only if, $\alpha \neq 0$. Thus if $\alpha \neq 0$, by Lemma~\ref{lemm:semibound} we have that $\tilde P_0:H^2(\R)\to L^2(\R)$ has closed range and is injective and therefore by Lemma~\ref{lemma:inv} $\tilde P_0$ is left invertible. This proves the following proposition.

\begin{proposition}
\label{prop:1d}
Let $\Delta$ be the operator
$$
\Delta = \p_x^2 -\left(\frac{3}{4}+\alpha\right) \frac{1}{x^2(1-x)^2}
$$
defined on $C^\infty_c(0,1)$. If $\alpha \neq 0$, then $D(\overline{\Delta})$ as an operator from $L^2(0,1)$ to itself is given by $s^{\frac{3}{2}}H^2_\cV(0,1)$, where $s=x(1-x)$ and $\cV$ is the $C^\infty$-module generated by $X=s\p_x$.
\end{proposition}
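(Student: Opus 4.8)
The plan is to reduce the computation of $D(\overline\Delta)$ to a left semi-Fredholm statement through the Mendoza criterion (Proposition~\ref{prop:mendoza}), following the outline that precedes the statement. First I would record that $s=x(1-x)$ is a joint defining function for the two boundary points and that $\cV=C^\infty\cdot(s\p_x)$ makes $[0,1]$ a Lie manifold with interior $(0,1)$; a direct computation gives $s^2\Delta=X^2-(1-2x)X-(\tfrac34+\alpha)\in\Diff^2_\cV$ with $X=s\p_x$, so $\Delta\in s^{-2}\Diff^2_\cV$. Fixing the compatible metric $g=s^{-2}\,dx^2$ with volume $\omega=dx/s$, the key bookkeeping point is that $\|f\|^2_{L^2(0,1)}=\int|f|^2 s\,\omega=\|s^{1/2}f\|^2_{L^2_\cV}$, i.e. $L^2(0,1)=s^{-1/2}L^2_\cV(0,1)$, which pins the weight that lands $\Delta$ in $L^2(0,1)$ at $\gamma=3/2$.

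Next I would conjugate, setting $\tilde P=s^{1/2}\Delta s^{3/2}$, and verify by expansion that $\tilde P=X^2+2(1-2x)X+6x(x-1)-\alpha$ is a genuine element of $\Diff^2_\cV(0,1)$, hence a continuous map $H^2_\cV(0,1)\to L^2_\cV(0,1)$. Transporting back, $\Delta$ is continuous $s^{3/2}H^2_\cV\to s^{-1/2}L^2_\cV=L^2(0,1)$, and Lemma~\ref{lemma:emb} supplies the continuous embedding $s^{3/2}H^2_\cV\hookrightarrow L^2(0,1)$ demanded by Proposition~\ref{prop:mendoza}. Since multiplication by the smooth nonvanishing weight $s^{3/2}$ is an isometry carrying $C^\infty_c(0,1)$ (dense in $H^2_\cV$ by definition) onto $C^\infty_c(0,1)$, the whole statement reduces to proving that $\tilde P:H^2_\cV\to L^2_\cV$ is left semi-Fredholm.

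For this I would invoke the modified CNQ conditions (Theorem~\ref{thm:cnq}). I would first build a Hausdorff integrating groupoid $\cG\rightrightarrows[0,1]$ by gluing, via Proposition~\ref{prop:glue}, two copies of the action groupoid of Example~\ref{example:action_r} over collars $[0,\varepsilon)$ and $(1-\varepsilon,1]$ onto the pair groupoid $(0,1)^2\rightrightarrows(0,1)$; this is a stratified submersion groupoid (Remark~\ref{rem:strat_group}) with isotropy $\cG_0=\cG_1=\R$ amenable, so the hypotheses of Theorem~\ref{thm:pre_cnq} hold. As $\tilde P$ is elliptic on $(0,1)$, it is left semi-Fredholm as soon as the two limit operators $\tilde P_0,\tilde P_1$ are left invertible.

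The heart of the argument, and where the hypothesis $\alpha\neq0$ is forced, is the limit-operator analysis, which I expect to be the main obstacle. Replacing $X=s\p_x$ by the Lie-algebra generator $Z=\p_y$ and evaluating the coefficients at $x=0$ yields $\tilde P_0=\p_y^2+2\p_y-\alpha$. By Lemma~\ref{lemm:semibound}, left invertibility on $H^2(\R)$ is equivalent to a bound $\|\widehat{\tilde P_0 u}\|\ge c\|u\|$, and the Fourier multiplier has squared modulus $(\xi^2+\alpha)^2+4\xi^2$. The crux is to decide when this is bounded below by a positive constant: being continuous, proper and nonnegative, it admits a positive lower bound iff it never vanishes, which happens exactly when $\alpha\neq0$. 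Hence for $\alpha\neq0$, Lemma~\ref{lemma:inv} gives left invertibility of $\tilde P_0$, and the reflection symmetry $x\mapsto1-x$ disposes of $\tilde P_1$ identically. Feeding left semi-Fredholmness back into Proposition~\ref{prop:mendoza} gives $D(\overline{\tilde P})=H^2_\cV(0,1)$, and conjugating by $s^{3/2}$ yields $D(\overline\Delta)=s^{3/2}H^2_\cV(0,1)$.
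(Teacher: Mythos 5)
Your proposal is correct and follows essentially the same route as the paper: the Mendoza reduction, the conjugation $\tilde P=s^{1/2}\Delta s^{3/2}$ (your formula $\tilde P=X^2+2(1-2x)X+6x(x-1)-\alpha$ agrees with the paper's general-$\gamma$ computation at $\gamma=3/2$), the glued action/pair groupoid, and the Fourier-multiplier left-invertibility of $\tilde P_0=\p_y^2+2\p_y-\alpha$ via Lemmas~\ref{lemm:semibound} and~\ref{lemma:inv}. The only (welcome) addition is that you make explicit the density of $C^\infty_c(0,1)$ in $s^{3/2}H^2_\cV(0,1)$, which the paper leaves implicit.
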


Can we say something about $D(\overline{\Delta})$, when $\alpha = 0$? As we have seen, the operator is not left semi-Fredholm in this case due to the problems in the range. What one can do is to consider a smaller Hilbert space $(A,\|\cdot\|_A)$ which is continuously embedded in $L^2(0,1)$ and contains smooth functions as a dense subset. Then the closure $D_A(\overline \Delta)$ in this smaller space is contained in $D(\overline{\Delta})$ providing us with some useful information on the domain of the closure. Let us apply this idea to our example.

Suppose that $\alpha = 0$. Let $\varepsilon>0$, then $s^\varepsilon L^2(0,1)\hookrightarrow L^2(0,1)$ is a continuous map. We repeat the whole procedure one more time for this operator, but with $\gamma = \frac{3}{2}+\varepsilon$. The limit operator is then given by
$$
\tilde P_0 = \p_y^2 +2(1+\varepsilon)\p_y + \varepsilon(2+\varepsilon),
$$
which is left invertible. Thus we obtain that
$$
\bigcup_{\varepsilon>0} s^{3/2+\varepsilon}H^2_\cV(0,1)\subset D(\overline{\Delta})
$$
In a completely similar fashion we can now assume $-2<\varepsilon<0$. Then $ L^2(0,1)\subset s^\varepsilon L^2(0,1)$ and by the natural inclusions of Sobolev space with different weights we find that
$$
D(\overline{\Delta}) \subset \bigcap_{\varepsilon<0} s^{3/2+\varepsilon}H^2_\cV(0,1).
$$
Thus we have proven
\begin{proposition}
\label{prop:prop2}
In notations of Proposition~\ref{prop:1d} for $\alpha = 0$ one has
$$
\bigcup_{\varepsilon>0} s^{3/2+\varepsilon}H^2_\cV(0,1)\subset D(\overline{\Delta}) \subset \bigcap_{\varepsilon>0} s^{3/2-\varepsilon}H^2_\cV(0,1).
$$
\end{proposition}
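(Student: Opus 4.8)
The plan is to exploit the device, sketched in the discussion preceding the statement, of perturbing the ambient Hilbert space. When $\alpha=0$ the operator $\tilde P$ is no longer left semi-Fredholm because its limit symbol $\xi^2(\xi^2+4)$ vanishes at $\xi=0$, so Theorem~\ref{thm:cnq} cannot be applied directly. I would instead replace the target $L^2(0,1)$ by the weighted space $s^\varepsilon L^2(0,1)$ and, to compensate, conjugate by $s^{3/2+\varepsilon}$ rather than $s^{3/2}$; this reintroduces a nonzero zeroth-order term in the limit operator and restores left invertibility. The two inclusions of the proposition will then come from running the full CNQ argument for each admissible $\varepsilon$ and comparing the resulting closures, using the elementary principle that a continuous embedding $A\hookrightarrow B$ of Hilbert spaces yields $D_A(\overline{\Delta})\subset D_B(\overline{\Delta})$.

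For the left-hand inclusion I would fix $\varepsilon>0$ and rerun Steps 1--6 of the outline with $\gamma=\tfrac32+\varepsilon$, making $\Delta$ a continuous map $s^{3/2+\varepsilon}H^2_\cV(0,1)\to s^\varepsilon L^2(0,1)$. Specializing the general expression for $\tilde P$ to $\alpha=0$ and passing to the boundary (replacing $X=x\p_x$ by $\p_y$ and evaluating at $x=0$) produces the limit operator
$$
\tilde P_0=\p_y^2+2(1+\varepsilon)\p_y+\varepsilon(2+\varepsilon).
$$
On the Fourier side its symbol $-\xi^2+2i(1+\varepsilon)\xi+\varepsilon(2+\varepsilon)$ has imaginary part $2(1+\varepsilon)\xi$, which vanishes only at $\xi=0$, where the real part equals $\varepsilon(2+\varepsilon)\neq0$; hence the symbol is bounded away from zero and $\tilde P_0$ is left invertible by Lemmas~\ref{lemm:semibound} and~\ref{lemma:inv}. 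The right boundary follows by the symmetry $x\mapsto1-x$. Theorem~\ref{thm:cnq} then shows $\Delta\colon s^{3/2+\varepsilon}H^2_\cV(0,1)\to s^\varepsilon L^2(0,1)$ is left semi-Fredholm, and Proposition~\ref{prop:mendoza} upgrades this to closedness, giving $D_{s^\varepsilon L^2}(\overline\Delta)=s^{3/2+\varepsilon}H^2_\cV(0,1)$. Since $s^\varepsilon L^2(0,1)\hookrightarrow L^2(0,1)$ for $\varepsilon>0$, this domain lies inside $D(\overline\Delta)$, and the union over $\varepsilon>0$ is the desired lower bound.

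For the right-hand inclusion I would repeat the computation for $-2<\varepsilon<0$. The same symbol analysis goes through: the constant term $\varepsilon(2+\varepsilon)$ is now strictly negative but still nonzero, and in the degenerate case $\varepsilon=-1$, where the first-order term disappears, the symbol reduces to $-\xi^2-1$, which never vanishes. Thus $\tilde P_0$ remains left invertible on the whole interval $(-2,0)$, and the argument again yields $D_{s^\varepsilon L^2}(\overline\Delta)=s^{3/2+\varepsilon}H^2_\cV(0,1)$. This time the embedding runs the other way, $L^2(0,1)\hookrightarrow s^\varepsilon L^2(0,1)$, so $D(\overline\Delta)\subset s^{3/2+\varepsilon}H^2_\cV(0,1)$ for every such $\varepsilon$. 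Writing $\varepsilon=-\delta$ and noting (via Lemma~\ref{lemma:emb}) that the spaces $s^{3/2-\delta}H^2_\cV(0,1)$ grow with $\delta$, so that the cut-off $\delta<2$ does not affect the intersection, produces the upper bound.

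The $C^*$-algebraic content is entirely imported from Theorem~\ref{thm:cnq}, so the real care lies in the bookkeeping of inclusions: enlarging the ambient space enlarges the domain of closure while shrinking it shrinks the domain, and I must be sure that the perturbations $\varepsilon>0$ and $\varepsilon<0$ therefore give a lower and an upper bound respectively. The one computation with genuine content is verifying that the shifted limit operator stays left invertible across the full admissible range, which boils down to the fact that the zeroth-order coefficient $\varepsilon(2+\varepsilon)$ vanishes exactly at the two excluded endpoints $\varepsilon\in\{0,-2\}$; I expect this, together with the routine checks that $C^\infty_c(0,1)$ is dense in each $s^{3/2+\varepsilon}H^2_\cV(0,1)$ and that the relevant embeddings are continuous (both furnished by Lemma~\ref{lemma:emb} and the Sobolev theory on Lie manifolds), to be the only points requiring attention.
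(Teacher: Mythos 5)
Your proposal is correct and follows essentially the same route as the paper: perturb the ambient space to $s^{\varepsilon}L^{2}(0,1)$, conjugate with $\gamma=\tfrac32+\varepsilon$, check left invertibility of the limit operator $\p_y^2+2(1+\varepsilon)\p_y+\varepsilon(2+\varepsilon)$, and transfer the closures through the continuous embeddings in each direction. The only difference is that you spell out details the paper leaves implicit (the symbol analysis over the whole range $-2<\varepsilon<0$, including $\varepsilon=-1$, and the observation that the cut-off at $\varepsilon=-2$ does not affect the intersection), which are correct and consistent with the paper's argument.
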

\bigskip
Operators similar to~\eqref{eq:1d_example} are often encountered in practice and were extensively studied in the past. Let us compare Propositions~\ref{prop:1d} and~\ref{prop:prop2} with results which exist in the literature, more precisely with~\cite{georgescu3} which has a literature overview and the most up-to-date results. In~\cite{georgescu3} the authors study the operator of the form
\begin{equation}
\label{eq:georg}
L_\beta = -\p^2_x + \left( \beta - \frac{1}{4}\right)\frac{1}{x^2}
\end{equation}
even more generally with complex $\beta$. They state at the end of Section 1.2 and prove later that
\begin{enumerate}
\item if $\beta<1$ then $\overline{L}_\beta$ is Hermitian (symmetric) but not self-adjoint and its domain is given by $H^2_0(\R_+)$;
\item if $\beta = 1$ then $\overline{L}_\beta$ is self-adjoint and $H^2_0(\R_+)$ is dense in its domain;
\item if $\beta > 1$ then $\overline{L}_\beta$ is self-adjoint and its domain is given by $H^2_0(\R_+)$
\end{enumerate}
where 
$$
H^2_0(\R_+) = \{u\in H^2(\R_+) \, : \, u(0)=\p_x u(0) = 0 \}.
$$
Note that if we take $\beta = \alpha + 1$ in~\eqref{eq:georg} then we obtain an operator similar to~\eqref{eq:1d_example} and we expect that close to zero the behaviour of functions in $\overline{L}_{\alpha+1}$ and $\Delta$ should be the same.

Indeed, if $u\in H^2_0(\R_+)$ then $u=o(x^{\frac{3}{2}})$ as $x\to 0+$. On the other side close to zero $s\sim x$. Thus  
$$
\omega \sim \frac{dx}{x}, \qquad X \sim x\p_x, \qquad x\to 0+
$$
and if $u\in s^{\frac{3}{2}}H^2_\cV(0,1)$, then for $1>\varepsilon>0$ we have that
$$
\int_0^\varepsilon \left|u x^{-\frac{3}{2}} \right|^2 \frac{dx}{x} + \int_0^\varepsilon \left|x\p_x(u x^{-\frac{3}{2}}) \right|^2 \frac{dx}{x} + \int_0^\varepsilon \left|(x\p_x)^2 (u x^{-\frac{3}{2}}) \right|^2 \frac{dx}{x} < +\infty.
$$
Each of those integrals is finite if, and only if, $u(x) = o(x^\frac{3}{2})$ as $x\to 0+$ recovering the required asymptotics.

If $\alpha = 0$, then both $\Delta$ and $L_1$ are self-adjoint real symmetric operators. One can check that functions that go to zero as $O(x^{3/2})$ when $x\to 0+$ lie in the domain of the adjoint (those are $L^2$ functions mapped to $L^2$ functions) and hence lie in the domain of the closure as well by the results of~\cite{georgescu3}. This asymptotics is indeed consistent with Proposition~\ref{prop:prop2} and is also in accordance with the results of~\cite{mendoza}, which can be seen as the generalisation of operators~\eqref{eq:1d_example} and~\eqref{eq:georg}.

\section{Closure of the Laplace-Beltrami operator on generic 2D AR manifolds}

\label{sec:ar_anal}

\subsection{Almost-Riemannian manifolds as Lie manifolds and associated Lie groupoids}

\label{subsec:ar_lie}

Let us go back to the study of the Laplace operator on a generic 2D ARS structure. In this section we prove Theorem~\ref{thm:main}. We start by cutting $M$ at the singular set $\cZ$ and taking a connected component which we denote tautologically again by $M$ and whose boundary is $\p M \subset \cZ \times \cZ$ (see Figure~\ref{fig:cut}).

If we are given a local frame of orthonormal vector fields $X_1,X_2$ we can write the Laplace operator as
\begin{equation}
\label{eq:laplace_frame}
\Delta = X_1^2 + X_2^2 + \dive_\omega X_1 + \dive_\omega X_2.
\end{equation}
or if have chosen local coordinates such that $X_1,X_2$ are of the form
$$
X_1 = \p_x, \qquad X_2 = f(x,y)\p_y,
$$
the Laplace operator is given by~\eqref{eq:Laplace}. 

Let $s$ be a defining function of $\p M$. From the normal forms~\eqref{eq:grushin},~\eqref{eq:tangency} we see that for a generic structure $\pm f$ have non-zero differential and hence satisfy the definition of a defining function. Function $f$ is not global, but any defining functions locally can be written as 
$$
s = fe^{g}
$$ 
for some smooth function $g$. A simple way to define invariantly a global defining function for a generic 2D ARS is to take a smooth volume form $\nu$ and take the Radon-Nykodim derivative with respect to the Riemannian volume $\omega$. Nevertheless remember that the definition of Sobolev spaces and Fredholm properties do not depend on the particular choice of the defining function. For this reason we will assume in concrete calculations that above $g = 1$, since it does not influence the results, but greatly shortens formulas under consecutive differentiations.

The Lie manifold structure $\cV$ is the $C^\infty(M)$-module over vector fields which close to the singular set look like the span of
\begin{equation}
\label{eq:fields}
Y_1 = s X_1, \qquad Y_2 = sX_2.
\end{equation}
From formula~\eqref{eq:Laplace} it then follows that $\Delta = s^{-2}\Diff^2_\cV(M)$. 

\begin{theorem}
\label{thm:groupoid}
The Lie algebroid $A_{\cV} \to M$ can be integrated to a Lie groupoid $\cG$, such that 
\begin{enumerate}
\item $\cG$ is Hausdorff;
\item $\cG|_{M_0}$ is equivalent to the pair groupoid $M_0\times M_0\rightrightarrows M_0$;
\item If $q\in \cZ$ is a Grushin point, then $\cG_q$ is isomorphic to the isotopic to the identity component of the affine group of transformations of the real line;
\item If $q\in \cZ$ is tangency point, then $\cG_q$ is isomorphic to the abelian group $\R^2$.
\end{enumerate} 
\end{theorem}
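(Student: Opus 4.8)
The plan is to realise $\cG$ by the gluing construction of Proposition~\ref{prop:glue}, with the local pieces dictated by the normal forms of Theorem~\ref{thm:normal_forms}. First I would fix the $A_\cV$-invariant stratification $M = M_0 \sqcup \cZ_G \sqcup \{p_1,\dots,p_N\}$, where $\cZ_G$ is the union of the Grushin arcs (the $1$-dimensional open part of $\cZ$) and $p_1,\dots,p_N$ are the finitely many tangency points, which are isolated by the genericity assumption (H0). This decomposition is $A_\cV$-invariant because every field in $\cV$ is of the form $sX$ with $s$ a defining function of $\cZ$, hence vanishes on $\cZ$; the induced flows therefore fix $\cZ$ pointwise, preserve $M_0$, and fix each $p_i$. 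The group claims (3) and (4) are then local: the isotropy Lie algebra at $q\in\cZ$ is the whole fibre $A_{\cV,q}=\ker\rho_q$ (since $\rho$ vanishes on $\cZ$), with bracket read off from $[Y_1,Y_2]$ modulo the $Y_i$. In the Grushin frame $Y_1\sim x\p_x$, $Y_2\sim x^2\p_y$ one finds $[Y_1,Y_2]=2Y_2$, i.e. the affine Lie algebra, so $\cG_q$ is the simply connected affine group, its identity component being the one with positive dilation as in~\eqref{eq:group}. In the tangency frame $Y_1=s\p_x$, $Y_2=s^2\p_y$ with $s\sim y-x^2\psi(x)$ one computes $[Y_1,Y_2]=-sY_1-2(2x\psi+x^2\psi')Y_2$, and both coefficients vanish at the tangency point $x=y=0$; hence the isotropy algebra is abelian and $\cG_q\cong\R^2$.

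Next I would produce local integrating groupoids on a finite open cover of $\cZ$. Over a neighbourhood of a Grushin point the module $\cV$ generated by~\eqref{eq:fields} coincides, after absorbing the nonvanishing factors $e^{\phi}$, with the module of Example~\ref{ex:grushin}; there the Lie algebroid integrates to the action groupoid $G\ltimes\R^2_+$ of the affine group, which is Hausdorff and restricts to the pair groupoid on the interior. Over a neighbourhood of a tangency point there is no ready-made model, and this is where the real work lies: I would integrate $A_\cV$ there directly, applying Theorems~\ref{thm:glue} and~\ref{thm:charts} to the stratification, with the strata integrated by the pair groupoid (interior), the affine-group bundle ($\cZ_G$), and $\R^2$ (the point). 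The content of Theorem~\ref{thm:charts} is that $\cG$ is a Lie groupoid precisely when the explicit charts~\eqref{eq:chart}, built from the flows of a complete generating frame for $\cV$, form a differentiable atlas; so I would choose a convenient complete frame adapted to~\eqref{eq:tangency} and verify directly that the resulting maps are local diffeomorphisms with smooth transitions near $p_i$.

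Finally I would assemble the global groupoid by gluing the pair groupoid $M_0\times M_0$ to the finitely many boundary models along their common interior reductions (all of which are pair groupoids), using Proposition~\ref{prop:glue}. On each overlap the gluing isomorphism is the identity of the pair groupoid, and the weak gluing condition of Definition~\ref{def:glue} holds for the same reason as in the $1$D model of Section~\ref{sec:1D}: $\cZ$ is $\cG$-invariant, so a non-unit arrow over a boundary point can only be composed with arrows in the same isotropy group, which lives inside a single local chart, while all interior compositions take place in $M_0\times M_0$. Proposition~\ref{prop:glue} then yields a Lie groupoid that is Hausdorff because each local piece is, establishing (1), and that restricts to the pair groupoid over $M_0$, establishing (2). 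I expect the genuine obstacle to be the tangency chart: because the tangency point has no tubular neighbourhood compatible with the almost-Riemannian metric, checking that~\eqref{eq:chart} gives a Hausdorff smooth atlas there is delicate and may require first blowing up $p_i$ to resolve the degeneration of the frame~\eqref{eq:fields}, exactly the kind of auxiliary step flagged in the remark following the outline of Section~\ref{sec:1D}.
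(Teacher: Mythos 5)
Your architecture coincides with the paper's in its correct parts: the Grushin neighbourhoods are handled exactly as in the paper (absorb the $e^\phi$ factors, invoke Example~\ref{ex:grushin}, glue with Proposition~\ref{prop:glue}), and your isotropy computations giving the affine group at Grushin points and $\R^2$ at tangency points are sound. But at the step you yourself identify as "where the real work lies" the proposal stops being a proof, and it is missing the two ideas that actually carry the paper's argument. First, completeness: the charts~\eqref{eq:chart} of Theorem~\ref{thm:charts} are built from flows of complete right invariant vector fields, and on a small neighbourhood $U_j$ of a tangency point the generators $Y_1=sX_1$, $Y_2=sX_2$ are \emph{not} complete — their flows exit $U_j$ through the interior part of its boundary. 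Saying you will "choose a convenient complete frame adapted to~\eqref{eq:tangency}" names the requirement, not a construction. The paper's device is to enlarge $U_j$ to $\tilde U_j$ whose closure has a second boundary face $S_2\subset M_0$ transversal to $\cZ$, and to replace the generators by $\tilde Y_i=s_2^2Y_i$, where $s_2$ is a defining function of $S_2$ with $s_2\equiv 1$ on $U_j$; this leaves the algebroid unchanged over $U_j$ but makes the fields tangent to every face of $\overline{\tilde U_j}$, hence complete, which is what makes both the charts and the second-kind exponential coordinates on the (solvable) isotropy groups global.

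Second, and more seriously, Hausdorffness. A differentiable atlas in the sense of Theorem~\ref{thm:charts} only yields a possibly non-Hausdorff smooth structure; Hausdorffness is a separate separation statement, and it is exactly the property your final assembly presupposes when you write that the glued groupoid is "Hausdorff because each local piece is" — the local piece at the tangency point is the thing in question, so as written the argument is circular. The paper supplies the missing separation argument: two arrows over the \emph{same} boundary point lie in a single chart of the form~\eqref{eq:maps} because the isotropy groups are solvable and the complete flows give global coordinates of the second kind; two arrows over \emph{distinct} boundary points are separated by shrinking the base neighbourhoods $U,U'$ until their orbits $V,V'$ under the flows are disjoint, using smooth dependence on initial conditions and invariance of $\p\tilde U_j$. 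Note also that the paper does not use your stratification of $\cZ$ into Grushin arcs and isolated tangency points: it keeps the whole boundary face $S_1$ as a single stratum, integrated by the trivial bundle $S_1\times\R^2$ with the $q$-dependent multiplication $(q,t_2,\tau_2)(q,t_1,\tau_1)=(q,t_1+t_2,e^{\alpha(q)t_2}\tau_1+\tau_2)$, $\alpha=X_1(s_2^2s)$; this makes the degeneration from affine fibres ($\alpha\neq 0$) to $\R^2$ ($\alpha=0$) manifestly smooth, which in your finer stratification is precisely what would have to be verified by hand at $p_i$. Finally, no blow-up of the tangency point is needed; that suggestion points away from the actual resolution, which is the enlarge-and-rescale trick above.
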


\begin{proof}
Assume first that there are no tangency points. Since both $Y_1,Y_2$ vanish identically on $\cZ$, we have that the restriction $\cG_q$ for $q\in \cZ$ are Lie groups. If we at a Grushin point, then in a local neighbourhood by~\eqref{eq:grushin} we have $f(x,y) = xe^{\phi(x,y)}$. The $C^\infty(M)$-module locally generated by 
$$
Y_1 = x e^\phi \p_x, \qquad Y_1 = x^2 e^{2\phi} \p_y 
$$
coincides with $C^\infty(M)$-module locally generated by 
$$
\tilde{Y}_1 = x\p_x, \qquad \tilde{Y}_2 = x^2 \p_y.
$$
But those are the same generators as in the Example~\ref{ex:grushin}. This allows us immediately to construct an integrating Lie groupoid using glueing Proposition~\ref{prop:glue}. To see this consider the Lie algebroid $A_\cV$ coming from $\cV$. Cover the singular set by open sets $U_i$, $i=1,\dots, N$, such that on each $U_i$ vector fields $X_1,X_2$ are given by~\eqref{eq:grushin}. Then we can view $A_\cV|_{U_i}$ as the restriction of the Lie algebroid from the Example~\ref{ex:grushin}. But we have already integrated this Lie algebroid. So we can use reductions of the integrating Lie groupoid to $U_i$, that we call $\cG_i$ as Lie groupoids which integrate $A_\cV|_{U_i}$. Then using Proposition~\ref{prop:glue} we find that
$$
\cG = M_0 \times M_0 \sqcup \left(\bigsqcup_{i=1}^N \cG_{i}\right)/\sim
$$ 
integrates $A_\cV$, is Hausdorff and the isotropy groups $\cG_q$ for $q\in \cZ$ coincide with the affine group of transformation of the real line. 

Presence of tangency points introduces certain difficulties into the integration procedure. Theorem~\ref{thm:glue} guarantees that there is an integrating Lie groupoid, however it may fail to be Hausdorff due to the restrictive condition of being $d$-simply connected. In order to construct a Hausdorff integrating Lie groupoid also in the presence of tangency points we repeat the first step of the previous argument. We take an open cover $U_i \subset M$, $i=1,\dots,N$ of $\cZ$. Assume that $q$ is a tangency point and that it is contained in a unique $U_j$, which can be always achieved since tangecy points can not cluster.  Our goal is to construct a Hausdorff Lie groupoid which would integrate $A_\cV|_{U_j}$. 

Similarly to Grushin points we will construct a Lie groupoid whose reduction integrates $A_\cV|_{U_j}$ and whose restriction to the interior $U_j \cap M_0$ is the pair groupoid. To do this we consider a slightly bigger neighbourhood $\tilde{U}_j \supset U_j$, which satisfies the following extra assumption. The boundary of the closure of $\tilde U_j$ in $M$ has a one-dimensional face $S_1 = \tilde U_j \cap \p M$ and a one-dimensional face $S_2 \subset M_0$, which intersects transversally $\p M$ (see Figure~\ref{fig:for_proof}).

\vspace{-2cm}

\begin{figure}[ht]
\begin{center}

\tikzset{every picture/.style={line width=0.75pt}} %set default line width to 0.75pt        

\begin{tikzpicture}[x=0.75pt,y=0.75pt,yscale=-1,xscale=1]
%uncomment if require: \path (0,300); %set diagram left start at 0, and has height of 300

%Straight Lines [id:da995986371876006] 
\draw [line width=0.75]  [dash pattern={on 0.84pt off 2.51pt}]  (135.05,238.86) -- (490.5,238.86) ;
%Curve Lines [id:da8893303880539766] 
\draw [color={rgb, 255:red, 74; green, 144; blue, 226 }  ,draw opacity=0.47 ]   (214.04,238.86) .. controls (315.14,129.96) and (315.93,129.96) .. (411.51,238.86) ;
%Curve Lines [id:da9964574918418067] 
\draw [color={rgb, 255:red, 74; green, 144; blue, 226 }  ,draw opacity=0.47 ]   (253.84,238.86) .. controls (315.45,170.89) and (314.66,171.78) .. (372.32,238.86) ;
%Curve Lines [id:da973968369596702] 
\draw [color={rgb, 255:red, 74; green, 144; blue, 226 }  ,draw opacity=0.47 ]   (293.33,238.86) .. controls (313.08,218.93) and (314.66,217.15) .. (332.83,238.86) ;
%Curve Lines [id:da8370750338259507] 
\draw [color={rgb, 255:red, 74; green, 144; blue, 226 }  ,draw opacity=0.47 ]   (174.54,238.86) .. controls (313.56,86.36) and (314.35,85.47) .. (451.01,238.86) ;
%Curve Lines [id:da4768072223259847] 
\draw [line width=0.75]  [dash pattern={on 0.84pt off 2.51pt}]  (135.05,238.86) .. controls (315.93,-48.88) and (314.35,-49.77) .. (490.5,238.86) ;
%Curve Lines [id:da9507938248872815] 
\draw [line width=1.5]    (174.54,238.86) .. controls (315.93,23.19) and (317.51,23.19) .. (451.01,238.86) ;
%Curve Lines [id:da6307572576393287] 
\draw [color={rgb, 255:red, 74; green, 144; blue, 226 }  ,draw opacity=0.47 ]   (135.05,238.86) .. controls (315.14,42.77) and (315.14,41.88) .. (490.5,238.86) ;
%Curve Lines [id:da3431247990241466] 
\draw [color={rgb, 255:red, 74; green, 144; blue, 226 }  ,draw opacity=0.47 ]   (214.89,117.24) .. controls (297.83,44.82) and (333.07,38.54) .. (423.6,130.25) ;
%Straight Lines [id:da14586993186720998] 
\draw [color={rgb, 255:red, 208; green, 2; blue, 27 }  ,draw opacity=0.48 ]   (431.26,149.89) -- (431.26,238.86) ;
%Straight Lines [id:da04261743710146537] 
\draw [color={rgb, 255:red, 208; green, 2; blue, 27 }  ,draw opacity=0.48 ]   (391.76,83.16) -- (391.76,238.86) ;
%Straight Lines [id:da4120541090717027] 
\draw [color={rgb, 255:red, 208; green, 2; blue, 27 }  ,draw opacity=0.48 ]   (352.27,38.67) -- (352.27,238.86) ;
%Straight Lines [id:da21239958872084286] 
\draw [color={rgb, 255:red, 208; green, 2; blue, 27 }  ,draw opacity=0.48 ]   (312.9,23.41) -- (312.78,238.86) ;
%Straight Lines [id:da6239634750733226] 
\draw [color={rgb, 255:red, 208; green, 2; blue, 27 }  ,draw opacity=0.48 ]   (273.28,38.67) -- (273.28,238.86) ;
%Straight Lines [id:da8670776226122473] 
\draw [color={rgb, 255:red, 208; green, 2; blue, 27 }  ,draw opacity=0.48 ]   (233.79,83.16) -- (233.79,238.86) ;
%Straight Lines [id:da871929240015581] 
\draw [color={rgb, 255:red, 208; green, 2; blue, 27 }  ,draw opacity=0.48 ]   (194.29,149.89) -- (194.29,238.86) ;
%Straight Lines [id:da2053969662760986] 
\draw [color={rgb, 255:red, 208; green, 2; blue, 27 }  ,draw opacity=0.48 ]   (273.55,156.47) -- (273.27,148.99) ;
\draw [shift={(273.16,145.99)}, rotate = 447.86] [fill={rgb, 255:red, 208; green, 2; blue, 27 }  ,fill opacity=0.48 ][line width=0.08]  [draw opacity=0] (10.72,-5.15) -- (0,0) -- (10.72,5.15) -- (7.12,0) -- cycle    ;
%Straight Lines [id:da41929625535206383] 
\draw [color={rgb, 255:red, 208; green, 2; blue, 27 }  ,draw opacity=0.48 ]   (233.98,182.71) -- (233.76,175.68) ;
\draw [shift={(233.66,172.68)}, rotate = 448.2] [fill={rgb, 255:red, 208; green, 2; blue, 27 }  ,fill opacity=0.48 ][line width=0.08]  [draw opacity=0] (10.72,-5.15) -- (0,0) -- (10.72,5.15) -- (7.12,0) -- cycle    ;
%Straight Lines [id:da8500154650958026] 
\draw [color={rgb, 255:red, 208; green, 2; blue, 27 }  ,draw opacity=0.48 ]   (312.78,138.61) -- (312.69,131.19) ;
\draw [shift={(312.65,128.19)}, rotate = 449.33] [fill={rgb, 255:red, 208; green, 2; blue, 27 }  ,fill opacity=0.48 ][line width=0.08]  [draw opacity=0] (10.72,-5.15) -- (0,0) -- (10.72,5.15) -- (7.12,0) -- cycle    ;
%Straight Lines [id:da4418258259292098] 
\draw [color={rgb, 255:red, 208; green, 2; blue, 27 }  ,draw opacity=0.48 ]   (352.35,156.47) -- (352.2,148.99) ;
\draw [shift={(352.15,145.99)}, rotate = 448.91] [fill={rgb, 255:red, 208; green, 2; blue, 27 }  ,fill opacity=0.48 ][line width=0.08]  [draw opacity=0] (10.72,-5.15) -- (0,0) -- (10.72,5.15) -- (7.12,0) -- cycle    ;
%Straight Lines [id:da2874286183467909] 
\draw [color={rgb, 255:red, 208; green, 2; blue, 27 }  ,draw opacity=0.48 ]   (391.92,187.81) -- (391.74,182.35) ;
\draw [shift={(391.64,179.35)}, rotate = 448.13] [fill={rgb, 255:red, 208; green, 2; blue, 27 }  ,fill opacity=0.48 ][line width=0.08]  [draw opacity=0] (10.72,-5.15) -- (0,0) -- (10.72,5.15) -- (7.12,0) -- cycle    ;
%Straight Lines [id:da9144253367832978] 
\draw [line width=1.5]    (174.54,238.86) -- (451.01,238.86) ;
%Straight Lines [id:da23532088587233968] 
\draw [color={rgb, 255:red, 74; green, 144; blue, 226 }  ,draw opacity=0.52 ]   (296.11,162.66) -- (300.66,160.37) ;
\draw [shift={(303.34,159.02)}, rotate = 513.23] [fill={rgb, 255:red, 74; green, 144; blue, 226 }  ,fill opacity=0.52 ][line width=0.08]  [draw opacity=0] (10.72,-5.15) -- (0,0) -- (10.72,5.15) -- (7.12,0) -- cycle    ;
%Straight Lines [id:da8609430414573735] 
\draw [color={rgb, 255:red, 74; green, 144; blue, 226 }  ,draw opacity=0.52 ]   (296.46,195.83) -- (300.55,192.75) ;
\draw [shift={(302.95,190.94)}, rotate = 503.02] [fill={rgb, 255:red, 74; green, 144; blue, 226 }  ,fill opacity=0.52 ][line width=0.08]  [draw opacity=0] (10.72,-5.15) -- (0,0) -- (10.72,5.15) -- (7.12,0) -- cycle    ;
%Straight Lines [id:da11912691108987683] 
\draw [color={rgb, 255:red, 74; green, 144; blue, 226 }  ,draw opacity=0.52 ]   (292.99,128.77) -- (299.01,126.55) ;
\draw [shift={(301.83,125.52)}, rotate = 519.79] [fill={rgb, 255:red, 74; green, 144; blue, 226 }  ,fill opacity=0.52 ][line width=0.08]  [draw opacity=0] (10.72,-5.15) -- (0,0) -- (10.72,5.15) -- (7.12,0) -- cycle    ;
%Straight Lines [id:da19436125424713457] 
\draw [color={rgb, 255:red, 74; green, 144; blue, 226 }  ,draw opacity=0.52 ]   (290.56,96.34) -- (296.69,94.57) ;
\draw [shift={(299.57,93.74)}, rotate = 523.9300000000001] [fill={rgb, 255:red, 74; green, 144; blue, 226 }  ,fill opacity=0.52 ][line width=0.08]  [draw opacity=0] (10.72,-5.15) -- (0,0) -- (10.72,5.15) -- (7.12,0) -- cycle    ;
%Straight Lines [id:da6719026643315116] 
\draw [color={rgb, 255:red, 74; green, 144; blue, 226 }  ,draw opacity=0.52 ]   (291.95,66.09) -- (297.04,64.7) ;
\draw [shift={(299.93,63.9)}, rotate = 524.6800000000001] [fill={rgb, 255:red, 74; green, 144; blue, 226 }  ,fill opacity=0.52 ][line width=0.08]  [draw opacity=0] (10.72,-5.15) -- (0,0) -- (10.72,5.15) -- (7.12,0) -- cycle    ;
%Curve Lines [id:da1304949259481808] 
\draw    (161.59,99.78) .. controls (192.29,85.09) and (205.3,91.88) .. (219.4,101.7) ;
\draw [shift={(220.94,102.78)}, rotate = 215.19] [color={rgb, 255:red, 0; green, 0; blue, 0 }  ][line width=0.75]    (10.93,-3.29) .. controls (6.95,-1.4) and (3.31,-0.3) .. (0,0) .. controls (3.31,0.3) and (6.95,1.4) .. (10.93,3.29)   ;
%Curve Lines [id:da027944429611429067] 
\draw    (419.87,265.47) .. controls (446.14,262.5) and (451.18,254.09) .. (460.75,241.07) ;
\draw [shift={(461.81,239.63)}, rotate = 486.62] [color={rgb, 255:red, 0; green, 0; blue, 0 }  ][line width=0.75]    (10.93,-3.29) .. controls (6.95,-1.4) and (3.31,-0.3) .. (0,0) .. controls (3.31,0.3) and (6.95,1.4) .. (10.93,3.29)   ;

% Text Node
\draw (317.91,29.41) node [anchor=north west][inner sep=0.75pt]    {$\widetilde{\mathnormal{U}_{j}}$};
% Text Node
\draw (199.5,215.83) node [anchor=north west][inner sep=0.75pt]    {$U_{j}$};
% Text Node
\draw (140.7,90.03) node [anchor=north west][inner sep=0.75pt]    {$S_{2}$};
% Text Node
\draw (402.46,254.25) node [anchor=north west][inner sep=0.75pt]    {$S_{1}$};

\end{tikzpicture}

\caption{Illustration to the construction of the groupoid. Original set $U_j$ is the set bounded by thick solid lines, its extensions $\tilde U_j$ is the set bounded by the doted lines. The remaining lines are flow lines of $\tilde Y_i$.\label{fig:for_proof}}
\end{center}
\end{figure}
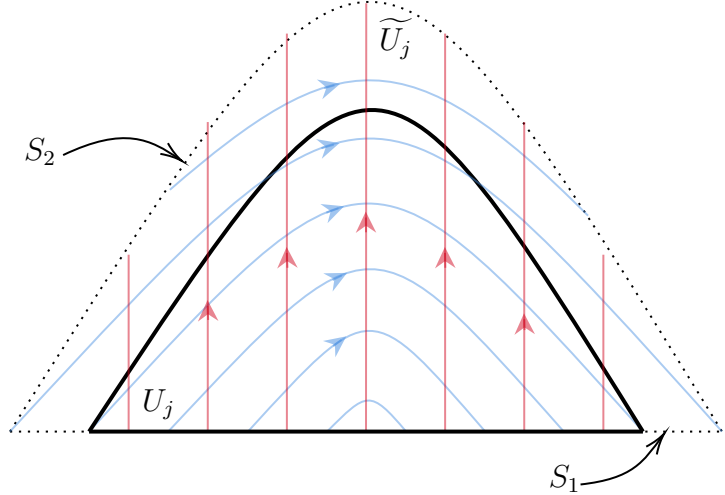

Let $s_2$ be the defining function of $S_2$, such that $s_2\equiv 1$ on $U_j$. We define a new Lie manifold structure $\tilde{\cV}$ as a $C^\infty(M)$-module generated by 
$$
\tilde Y_1 = s_2^2 Y_1, \quad \tilde Y_2 = s_2^2Y_2.
$$ 
Note that by construction $A_{\tilde{\cV}}|_{U_j} = A_\cV$. This is just a particular choice for the extension of $A_\cV$ in order to guarantee completeness of vector fields $\tilde{Y}_1$, $\tilde{Y}_2$ and other choices are, of course, are possible. We wish to apply Theorem~\ref{thm:glue} to show that there exists a Lie groupoid integrating $A_{\tilde{\cV}}$. Indeed, on $S_2$ we have
$$
[\tilde{Y}_1,\tilde{Y}_2]|_{S_2} = 0.
$$
and hence a trivial bundle of abelian groups would integrate $A_{\tilde{\cV}}|_{S_2}$. For the face $S_1$ we have
$$
[\tilde{Y}_1,\tilde{Y}_2]|_{S_1} = (X_1(s_2^2 s)\tilde{Y}_2 - X_2(s_2^2 s)\tilde{Y}_1)|_{S_1}.
$$
We can without loss of generality as discussed previously take $s=f$ and then from~\eqref{eq:normal} we find that $X_2(s_2^2 s)|_{S_1} = 0$. Let us shorten $\alpha(q) = X_1(s_2^2 s)(q)$. Then we can integrate $A_{\tilde{\cV}}|_{S_1}$ to a  trivial bundle $S_1 \times \R^2$, such that
$$
d(q,t,\tau) = r(q,t,\tau) =q , \qquad u(q)=(q,0,0)
$$
and the multiplication is given by
$$
(q,t_2,\tau_2)(q,t_2,\tau_2) = (q,t_1+t_2,e^{\alpha(q)t_2}\tau_1 + \tau_2).
$$
Hence there exists a $d$-simply connected Lie groupoid $\cG$ that integrates $A_{\tilde{\cV}}$ and such that its restriction to the interior of $\tilde{U}_j$ is the pair groupoid. It remains to show that $\cG$ is Hausdorff.

If two points belong to the restriction of $\cG$ to the interior of $\tilde U_j$ then clearly there exists two neighbourhoods separating them. The same is true if only one of the points belongs to the restriction to the interior. The only problem that may arise is that two points in $\cG_{\p \tilde{U}_j}$ maybe not separable inside $\cG$. To see that this is not the case we use Theorem~\ref{thm:charts}, which states that the maps~\eqref{eq:chart} form charts. Let $g,g'$ be points in $\cG_q$, $\cG_{q'}$ and $q,q'\in \p \tilde{U}_j$. Then we can consider two charts of the form
\begin{equation}
\label{eq:maps}
(y',t'_1,t'_2)\mapsto e^{t'_1 \tilde{Y}_1} \circ e^{t'_2 \tilde{Y}_2}(y'), \qquad (y,t_1,t_2)\mapsto  e^{t_1 \tilde{Y}_1} \circ e^{t_2 \tilde{Y}_2}(y),
\end{equation}
where $y,y'$ belong to small disjoint neighbourhoods $U,U'\subset \tilde{U}_j$ of $q,q'$. We have also slightly abused notations and identified $\tilde{Y}_i$ with right invariant vector fields using the range map. We do not need vector fields $X_i$ like in~\eqref{eq:chart}, because each of those charts already contains $\cG_{q}$ and $\cG_{q'}$ entirely. Indeed, if a right invariant vector field $Y$ is mapped to a complete vector field under the range map, then $Y$ is complete in $\cG$~\cite[Appendix, Section 33]{kumpera}. Hence by our construction maps~\eqref{eq:maps} are defined for all $t_i$, $t'_i$, $i=1,2$ and for $y=q$ and $y'=q'$ they represent coordinates of the second kind for the isotropy groups $G_q$, $G_{q'}$, which are global, since $G_{q}$ and $G_{q'}$ are solvable.

If $q=q'$, then $g,g' \in \cG_q$ are contained in a singe coordinate chart and hence can be separated by taking smaller neighbourhoods in this chart. If $q \neq q'$ we can take two disjoint neighbourhoods $U\ni q$, $U'\ni q'$ and consider charts~\eqref{eq:maps} with $y \in U$ and $y' \in U'$. We claim that we can make $U, U'$ so small that those charts do not overlap. Indeed, let us consider the images of $U\times B_\varepsilon$, $U'\times B_\varepsilon$ under~\eqref{eq:maps} projected to $\tilde{U}_j$ via the range map which we denote by $V,V'$. They will be given by the orbits of $U,U'$ under the flows of the corresponding vector fields from $\tilde{\cV}$. We assume that $\varepsilon>0$ is big enough to ensure that $g,g'$ lie in these two charts. If $U,U'$ would have been only subsets of $\p \tilde{U}_j$, they would have stayed invariant no matter how big $\epsilon$ is choosen. Thus by smooth dependence of solutions of ODEs on the initial value we can find $U,U'$ so small that $V \cap V' = \emptyset$. Hence the charts do not overlap as well and $g,g'$ are separated.

Thus we have proven that $A_{\tilde{\cV}}$ can be integrated to a Hausdorff Lie groupoid. Now it is enough to take its restriction to $U_j$ and glue to all of the other Lie groupoids obtained in a similar fashion via Proposition~\ref{prop:glue}.
 
\end{proof}

Theorem~\ref{thm:groupoid} allows us to apply the machinery of Sections~\ref{sec:anal} and~\ref{sec:cnq} in order to determine the closure of the Laplace operator $\Delta$. However $\Delta$ in many aspects is similar to the critical case $\alpha = 0$ in the 1D example~\eqref{eq:1d_example}. CNQ conditions are not directly applicable to $\Delta$, but they are applicable to certain perturbations of $\Delta$. For this reason in Theorem~\ref{thm:main} we considered instead
\begin{equation}
\label{eq:new_delta}
\tilde{\Delta} = \Delta - \frac{h}{s^2},
\end{equation}
where $h\in C^\infty(M)$ such that $h|_\cZ$ is a strictly positive function. If there are no tangency points a nice geometric perturbation of this kind exists, namely one can consider the operator $\Delta + cK$, where $c\in \R$ is a constant and $K$ is the Gaussian curvature. This operator can be considered as a possible covariant quantization of the classical energy Hamiltonian on a Grushin manifold (see~\cite{me} for further explanation).

We can now follow the algorithm outlined in Section~\ref{sec:1D}. Exactly as $\Delta$, the operator $\tilde\Delta$ belongs to $s^{-2} \Diff^2_\cV(M)$. If we want its image to lie in $L^2(M, \omega)$, then $\ran (s^2\tilde \Delta)$ must be contained in $s^2 L^2(M, \omega) = s L^2(M, \omega/s^2) $. We let $\mu_\cV = \omega/s^2$ and note that it is a volume coming from a compatible metric. In order to mitigate the weight of the $L^2$ space we conjugate by $s$ obtaining an operator 
$$
\tilde P = s\tilde \Delta s.
$$ 
Note that $\tilde P \in \Diff^2_\cV(M)$ and hence it defines a bounded operator $\tilde P: H^2_\cV(M) \to L^2_\cV(M)$. If $\tilde P$ is left semi-Fredholm, then it would imply that 
$$
\tilde{\Delta}:sH^2_\cV(M) \to s^{-1}L^2_\cV(M)\simeq L^2(M,\omega)
$$
is left semi-Fredholm as well. From Lemma~\ref{lemma:emb} it follows that there exists a continuous inclusion between $sH^2_\cV(M)$ and $L^2(M,\omega)$ and hence as consequence of Proposition~\ref{prop:mendoza} we would find that 
$$
D(\overline{\tilde{\Delta}}) = sH^2_\cV(M)
$$
thus proving Theorem~\ref{thm:main}.

So it only remains to prove that $\tilde P$ is a left semi-Fredholm. The proof will be a consequence of the modified Carvalho-Nistor-Qiao conditions stated in Theorem~\ref{thm:cnq}. Exactly as in the 1D model example we only need to check the left invertibility of limit operators $\tilde P_q$, which we study in the next subsection.

\subsection{Limit operators and their invertibility}

In a local trivialisation $X_1,X_2$, from~\eqref{eq:laplace_frame} and~\eqref{eq:fields} we find that
\begin{align*}
s\Delta s &= \sum_{i=1}^2 Y_i^2  + (X_i(s) + s \dive X_i)Y_i + (sX_i^2(s) +s X_i(s) \dive X_i). 
\end{align*}
Note that now all of the coefficients are smooth on the singular set $\cZ$ as can be easily seen from the local coordinate expressions and moreover the last term is exactly $s \Delta (s)$, which means that the free term is a globally defined smooth function.

Let us now write down the boundary operators. Let $q\in \p M$, $\cG_q$ be the restriction of $\cG$ to $q$, which coincides with the isotropy group $G_q$ and let $\fg_q$ be its Lie algebra. Suppose that $Z_1,Z_2$ is a basis of left invariant vector fields on $G_q$ at a point $q\in \p M$, which correspond to the vector fields $Y_1,Y_2$. Then a monomial $aY_{i_1}Y_{i_2}\dots Y_{i_n}$ will correspond to $a(q)Z_{i_1}Z_{i_2}\dots Z_{i_n}$, which is an element of the universal enveloping algebra $U(\fg_q)$. Since as discussed earlier the Sobolev spaces $H^k_\cV(M)$ do not depend on the choice of the compatible metric or the defining function the semi-Fredholm property of the operator $\tilde P$ does not depend as well on those things. Thus we can assume that around $q$ the defining function $s$ coincides with $f$. Recall that in coordinates around $q\in \p M$ centered at zero we have $\dive X_1 = -\p_x f/f$, $\dive X_2 = f \p_y f$ and $f(0,0) = 0$. So for the operator $\tilde P$ we obtain 
$$
\tilde P_q = Z_1^2 +Z_2^2   - \p_x f(0,0)^2 -h(0,0).
$$

Assume first that $q$ is a tangency point. From the normal form~\eqref{eq:tangency} it follows that $f(0,0) = 0$. Since $G_q = \R^2$ and
$$
[Y_1,Y_2](0,0) = 0, 
$$
we have
$$
\tilde P_q = \Delta_{\R^2} - h(0,0).
$$
Thus a Fourier transform arguments proves the following proposition.

\begin{proposition}
If $q$ is tangency point, then $G_q = \R^2$ and $\tilde P_q:H^2(\R^2)\to L^2(\R^2)$ is left invertible if, and only if, $h(0,0)> 0$.
\end{proposition}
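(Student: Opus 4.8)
The plan is to reduce the left-invertibility question to an elementary symbol estimate via the Fourier transform. By the computation preceding the statement, at a tangency point $\partial_x f(0,0)=0$ and the isotropy group is $G_q=\R^2$ with $[Y_1,Y_2](0,0)=0$, so that $Z_1,Z_2$ are the standard translation-invariant vector fields on $\R^2$ and $\tilde P_q=\Delta_{\R^2}-h(0,0)$ is a constant-coefficient operator. Write $c=h(0,0)$. Since $\tilde P_q:H^2(\R^2)\to L^2(\R^2)$ is bounded, Lemma~\ref{lemma:inv} tells us that $\tilde P_q$ is left invertible if, and only if, it is injective with closed range, which in turn holds if, and only if, there exists a constant $c_0>0$ with $\|\tilde P_q u\|_{L^2}\ge c_0\|u\|_{H^2}$ for all $u\in H^2(\R^2)$.

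First I would pass to the frequency domain. Applying the Fourier transform, which is an isometry of $L^2(\R^2)$, the operator $\tilde P_q$ becomes multiplication by the symbol $p(\xi)=-(|\xi|^2+c)$, while $\|u\|_{H^2}^2=\int_{\R^2}(1+|\xi|^2)^2|\hat u(\xi)|^2\,d\xi$. Consequently the lower bound above is equivalent to
\[
\inf_{\xi\in\R^2}\frac{(|\xi|^2+c)^2}{(1+|\xi|^2)^2}>0,
\]
so the whole problem reduces to deciding when this radial ratio stays away from zero.

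For the sufficiency, if $c>0$ then the function $r\mapsto (r^2+c)/(1+r^2)$ is continuous and strictly positive on $[0,\infty)$, equals $c$ at $r=0$ and tends to $1$ as $r\to\infty$, hence is bounded below by $\min(c,1)>0$; squaring, the displayed infimum is positive and $\tilde P_q$ is left invertible. For the necessity I would argue with a concentrating sequence: if $c\le 0$ the symbol $|\xi|^2+c$ vanishes on the zero set $\{|\xi|^2=-c\}$, which is the origin when $c=0$ and a circle when $c<0$. Choosing $\hat u_n$ to be $L^2$-normalized bumps supported in shrinking neighbourhoods of this zero set makes $\|\tilde P_q u_n\|_{L^2}^2=\int(|\xi|^2+c)^2|\hat u_n|^2\to 0$, while $\|u_n\|_{H^2}\ge\|u_n\|_{L^2}=1$ stays bounded below. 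Hence the ratio above is not bounded below; the operator remains injective (its symbol vanishes only on a null set) but its range fails to be closed, so by Lemma~\ref{lemma:inv} it is not left invertible.

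I expect the only delicate point to be the necessity direction when $c\le 0$, namely checking that the concentrating family can be chosen so that $\|\tilde P_q u_n\|_{L^2}/\|u_n\|_{H^2}\to 0$. The borderline case $c=0$, where the symbol vanishes only at $\xi=0$ rather than on a curve, must be handled by a Weyl sequence concentrating at the origin, but the construction is entirely routine; everything else is a direct Plancherel computation.
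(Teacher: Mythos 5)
Your proof is correct and follows essentially the same route as the paper, which disposes of the tangency case by observing that $\tilde P_q = \Delta_{\R^2} - h(0,0)$ is a constant-coefficient operator and invoking a Fourier transform argument together with the left-invertibility criterion of Lemma~\ref{lemma:inv} (exactly as in the paper's 1D model of Section~\ref{sec:1D}). Your write-up merely fills in the details the paper leaves implicit, including the necessity direction via concentrating sequences, and does so correctly.
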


The rest of this section is dedicated to the proof of an analogous result when $q$ is a Grushin point. More precisely

\begin{proposition}
\label{prop:grushin_suffering}
If $q$ is a Grushin point, then $G_q$ is the isotopic to the identity component of the affine group of transformations of the real line and $\tilde P_q:H^2(G_q)\to L^2(G_q)$ is left invertible if, and only if, $h(0,0)\neq 0$.
\end{proposition}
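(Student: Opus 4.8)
The plan is to make the limit operator completely explicit, reduce its left invertibility to a one-parameter family of ordinary differential operators by a partial Fourier transform, and then read off the critical value of $h(0,0)$ from the position of the approximate point spectrum of these one-dimensional operators. The guiding principle is that we only need \emph{left} invertibility, not invertibility: this is weaker than the absence of spectrum, and it is precisely this weakening that should turn a one-sided sign condition into the two-sided condition $h(0,0)\neq 0$, consistent with the fact that $\tilde\Delta$ is left semi-Fredholm but not Fredholm.

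First I would fix the explicit model. By Theorem~\ref{thm:groupoid} the group $G_q$ is the identity component of the affine group of the line; writing its Lie algebra through a basis $Z_1,Z_2$ corresponding to $Y_1,Y_2$, a direct bracket computation from~\eqref{eq:fields} gives $[Y_1,Y_2]|_q=2\,\p_x f(0,0)\,Y_2$, hence $[Z_1,Z_2]=2\kappa Z_2$ with $\kappa=\p_x f(0,0)\neq 0$. Choosing coordinates $(t,\tau)$ on $G_q$ in which $Z_1=\p_t$ and $Z_2=e^{2\kappa t}\p_\tau$, together with the invariant measure furnished by the Haar system, the formula for $\tilde P_q$ obtained just above this proposition becomes $\tilde P_q=\p_t^2+e^{4\kappa t}\p_\tau^2-\kappa^2-h(0,0)$, where $\kappa^2=\p_x f(0,0)^2$. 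The crucial structural feature, absent in the Riemannian case, is that $G_q$ is \emph{not unimodular}, so $\tilde P_q$ is not self-adjoint on $L^2(G_q)$; the modular weight is what will shift the spectral threshold. Since $\tilde P_q$ is elliptic, the elliptic estimate $\|u\|_{H^2}\le C(\|\tilde P_q u\|_{L^2}+\|u\|_{L^2})$ reduces the left invertibility of $\tilde P_q\colon H^2(G_q)\to L^2(G_q)$, via Lemmas~\ref{lemm:semibound} and~\ref{lemma:inv}, to the purely $L^2$ statement that $0$ does not belong to the approximate point spectrum of $\tilde P_q$, i.e.\ that there is no approximate null sequence.

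Next I would exploit that $\tilde P_q$ commutes with translations in $\tau$. A partial Fourier transform in $\tau$ writes $\tilde P_q$ as a direct integral $\int^{\oplus}A_\eta\,d\eta$ with $A_\eta=\p_t^2-\eta^2 e^{4\kappa t}-\kappa^2-h(0,0)$ acting on the weighted fibre. Conjugating by the modular weight $e^{-\kappa t}$ turns $A_\eta$ into $B_\eta=(\p_t+\kappa)^2-\eta^2 e^{4\kappa t}-\kappa^2-h(0,0)$ on the flat space $L^2(\R,dt)$. As $t\to-\infty$ the potential $\eta^2 e^{4\kappa t}$ vanishes and $B_\eta$ is asymptotic to the constant-coefficient operator $(\p_t+\kappa)^2-\kappa^2-h(0,0)$, whose Fourier symbol $-\xi^2+2i\kappa\xi-h(0,0)$ traces a leftward parabola with vertex $-h(0,0)$. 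Oscillatory Weyl sequences supported near $t=-\infty$ show that this parabola is exactly the part of the approximate point spectrum of $B_\eta$ that can reach $0$, and it passes through $0$ if and only if $h(0,0)=0$. This is the exact two-dimensional analogue of the symbol computation in the $1$D model~\eqref{eq:op_1d_ex}, the drift $2\kappa\p_t$ here playing the role of the term $2\p_y$ there. The subtle point, which I would stress, is that for $h(0,0)<0$ the origin lies in the \emph{interior} of the filled parabolic region, hence in the residual (not the approximate point) spectrum of $B_\eta$: there $B_\eta$ is injective with closed range but not surjective, so it is still left invertible, whereas for $h(0,0)>0$ the origin is in the resolvent set.

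From here the two implications follow, and the main obstacle is the uniformity. If $h(0,0)\neq 0$ the symbol has no real zero, each $A_\eta$ is bounded below, and the step I expect to be genuinely delicate is upgrading this to a lower bound \emph{uniform in $\eta$}, so that the direct integral is bounded below in $L^2$. I would control this by noting that the distance from $0$ to the approximate point spectrum of $A_\eta$ is governed by the distance from $0$ to the parabola, namely $\min_\xi\big((\xi^2+h(0,0))^2+4\kappa^2\xi^2\big)^{1/2}>0$; as $\eta\to 0$ the fibres converge to the invertible $A_0$, and as $\eta\to\infty$ the growing confining term $\eta^2 e^{4\kappa t}$ only moves the spectrum further from $0$, the non-self-adjointness being handled through the explicit similarity $B_\eta=e^{-\kappa t}\big(\p_t^2-\eta^2 e^{4\kappa t}-\kappa^2-h(0,0)\big)e^{\kappa t}$ with a self-adjoint operator and the decay of its Green's function. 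The resulting $L^2$ lower bound, with ellipticity, yields left invertibility. Conversely, if $h(0,0)=0$ the vertex of the parabola sits at $0$, and an oscillatory sequence concentrated at $t\to-\infty$ in the $\eta\approx 0$ channel furnishes an approximate null sequence for $\tilde P_q$; thus $\tilde P_q$ is not bounded below and cannot be left invertible. This establishes the stated equivalence.
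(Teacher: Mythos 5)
Your strategy coincides with the paper's up to and including the partial Fourier transform (same affine-group model, same fibre operators, same criterion coming from the polynomial $(\xi^2+h_0)^2+4\kappa^2\xi^2$), and your Weyl-sequence argument for the ``only if'' direction is sound and even more explicit than what the paper records. The first genuine gap is the uniformity in $\eta$, where your proposed treatment rests on a false picture of the family $A_\eta$. For $\eta\neq 0$ the translation $t\mapsto t-\tfrac{1}{2\kappa}\log|\eta|$ is a scalar multiple of a unitary on $L^2(\R,e^{-2\kappa t}dt)$ and conjugates $A_\eta$ into $A_{1}$; hence all fibres with $\eta\neq 0$ are unitarily equivalent, their lower-bound constants do not depend on $\eta$ at all, and they do \emph{not} converge to $A_0$ as $\eta\to 0$: the spectrum is constant in $\eta\neq 0$ and jumps at $\eta=0$ (for $-\kappa^2<h_0<0$ the constant-coefficient operator $A_0$ is invertible, while $A_1$ is not even surjective, see below). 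So your ``main obstacle'' evaporates in one line, but not by the limits $\eta\to 0,\infty$ you describe; this scaling invariance is exactly the paper's change of variables $x=a\sqrt{|\xi|}$, which collapses the whole family into the single $\xi$-independent operator $T=(x\p_x)^2-x^4-1-h_0$ on $L^2(\R_+,dx/x^3)$ (paper's normalisation $\p_xf(0,0)=1$), the $\xi$-direction then being absorbed abstractly by Corollaries~\ref{cor:2} and~\ref{cor:1}.

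The second, more serious gap is the single-fibre analysis, above all for $h_0<0$. There your justification is ``origin inside the filled parabola, hence residual spectrum, hence injective with closed range''. The winding picture can at best control a Fredholm \emph{index}; an operator of index $-1$ may still have a kernel, so injectivity --- the heart of left invertibility --- does not follow. Likewise, that the parabola is ``exactly'' the part of the approximate point spectrum reaching $0$ is an assertion, not a proof: one must exclude approximate null sequences living in the transition region of the potential, which is nontrivial for a non-self-adjoint operator. Finally, the device you invoke to control the non-self-adjointness, the similarity $B_\eta=e^{-\kappa t}(\p_t^2-\eta^2e^{4\kappa t}-\kappa^2-h_0)e^{\kappa t}$, cannot transfer $L^2$ bounds, because $e^{\kappa t}$ is unbounded with unbounded inverse. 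Concretely, the self-adjoint model satisfies $-(\p_t^2-\eta^2e^{4\kappa t}-\kappa^2-h_0)\geq \kappa^2+h_0$, so it is invertible, uniformly in $\eta$, for every $h_0>-\kappa^2$ \emph{including} $h_0=0$, where $\tilde P_q$ is not left invertible; any argument passing bounds through this similarity must therefore break down somewhere. If one instead transfers the inverse through its Green's function, the off-diagonal decay $e^{-\sqrt{\kappa^2+h_0}\,|t-s|}$ has to beat the conjugating weight $e^{\kappa(s-t)}$, which requires $h_0>0$. Your scheme therefore proves nothing on $-\kappa^2<h_0<0$, precisely the regime where the operator is left invertible without being invertible (in the paper's coordinates the cokernel of $T$ there contains $x^2K_\nu(x^2/2)$, $\nu=\sqrt{1+h_0}/2$). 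The paper closes this case by entirely different means: injectivity of $T$ from the explicit kernel $c_-K_\nu(x^2/2)+c_+I_\nu(x^2/2)$ and Bessel asymptotics ($I_\nu$ explodes at infinity, $K_\nu$ fails square-integrability at $0$), and closedness of the range from a \emph{second} application of the CNQ conditions on the compactification $\overline{\R}_+$, whose limit operators are $\p_z^2+2\p_z-h_0$ at zero (your parabola, made rigorous) and $\p_z^2-1$ at infinity. Any repair of your fibre-by-fibre route needs an argument of this strength in place of the spectral heuristics.
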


For the rest of this subsection we will write $G$ instead of $G_q$ and $\fg$ instead of $\fg_q$. Spaces $H^k(G)$ are the corresponding Sobolev spaces with respect to right invariant Haar measure $\mu_G$. Recall that we can define these Sobolev spaces as the completion of compactly supported smooth functions in the Sobolev norm
$$
\|u\|^2_{H^k(G)} = \|(1+\Delta_G)^{\frac{k}{2}}u\|^2_{L^2(G)}
$$
where $\Delta$ is the invariant Laplace operator, or
$$
\|u\|^2_{H^k(G)} = \sum \|Z_{i_1}Z_{i_2}\dots Z_{i_m}u \|^2_{L^2(G)},
$$
where $m\in\{1,\dots,k\}$ and $Z_{i_j}$, ${i_j}\in\{1,\dots \dim \fg\}$ is a basis of right invariant vector fields on $G$. Since Lie groups with left invariant metrics are geodesically complete and have constant curvature, they are manifolds of bounded geometry and the two norms are equivalent. 

From the normal form~\eqref{eq:grushin} it follows that in a chart centred at a Grushin point
$$
[Y_1,Y_2](0,0) = 2Y_2(0,0).
$$
Thus
$$
\tilde P_q = Z_1^2 +Z_2^2 - 1 - h_0
$$
where $Z_1,Z_2$ are two right invariant vector fields on $G$ which satisfy
$$
[Z_1,Z_2]= 2Z_2.
$$
It turns out that the right invariant definitions in the particular instance of $G$ result in more cumbersome calculations compared to the left invariant ones. We can pass from right invariant objects to left invariant ones by using the usual involution $i: g \mapsto g^{-1}$. In particular, right invariant vector fields are mapped to minus left invariant vector fields and a right invariant volume to a left invariant one. In the matrix representation~\eqref{eq:group} we have a basis of left invariant vector fields $a\p_a$, $a^2 \p_b$ which satisfy
$$
[a\p_a,a^2 \p_b] = 2 a^2 \p_b.
$$
Thus we can take $Z_1 = -a\p_a$, $Z_2 = -a^2 \p_b$ (here minus sign is a direct consequence of our a priori right invariant construction). The left invariant volume form is given by
$$
\mu_G = \frac{dadb}{a^3}.
$$
This way we arrive at the following coordinate representation of $\tilde P_q$:
\begin{equation}
\label{eq:locl_p}
\tilde P_q = (a\p_a)^2 + (a^2 \p_b)^2 - 1 - h_0.
\end{equation} 

We need to prove that $\tilde P_q$ is left invertible. To do this we first transform $\tilde P_q$ to a simpler form via some changes of variables. First we apply a partial Fourier transform with respect to the $b$ variable.
$$
\cF : u(a,b) \mapsto \hat{u}(a,\xi) = \int_\R u(a,b) e^{-i\xi b} db.
$$ 
This partial Fourier transform is a $L^2$-isometry
$$
\cF : L^2\left( G,\frac{da db}{a^3} \right)\to L^2\left( G,\frac{da d\xi}{a^3} \right) 
$$
and gives us
$$
\hat{P}_q = \left( a\p_a \right)^2 - a^4 \xi^2 - 1 - h_0.
$$
We can make a change of variables $x=a\sqrt{|\xi|}$ and obtain
\begin{equation}
\label{eq:operator}
\hat{P}_q = \left( x\p_x \right)^2 - x^4  - 1 - h_0.
\end{equation}
Note that this is a singular change variables, however the volume form is well defined because the singularity has measure zero. In this new coordinates the dual volume is equal to
$$
\hat{\mu}_G = \frac{|\xi| dx d\xi}{x^3}.
$$

Let us consider an operator of the same form as $\tilde P_q$ that we will denote as
$$
T= \left( x\p_x \right)^2 - x^4  - 1 - h_0
$$
acting on $C^\infty_c(\R_+)$. We want to extend its domain to a subspace $H\subset L^2(\R_+,dx/x^3)$ such that the operator $T: H \to  L^2(\R_+,dx/x^3)$ would be left invertible and $H$ would contain smooth compactly supported functions as a dense subset. After that we will use $H$ to prove left-invertibility of the operator $\tilde P_q$

To prove left invertibility $T$ we will use once again the CNQ conditions, by proving that $T$ is injective and left semi-Fredholm. We want to first represent $T$ as an operator coming from a differential operator compatible with a structure of a Lie manifold. We need to compactify $\R_+$. This can be done easily by local consideration around zero and infinity. The Lie manifold structure, which we denote by $\cV_X$, will be a $C^\infty$-module generated by a single vector field $X$ non-zero in $(0,+\infty)$ with certain asymptotics when $x\to 0$ and $x\to +\infty$.

From explicit form of $T$ we can see that it is compatible with the Lie manifold structure that is locally generated by $x\p_x$ around zero. In order to consider what happens at infinity, we make a change of variables $x\to y = 1/x$ and find that
$$
T = \left( y\p_y \right)^2 - \frac{1}{y^4}	-1 - h_0.
$$
After multiplying by $y^4$ we find an operator which is compatible with the Lie manifold structure that is locally generated by $y^3 \p_y$. Thus let 
$$
h: x\mapsto y = \frac{1}{x}
$$
and $X$ to be any vector field, such that
\begin{align*}
X&=x\p_x, & &0 < x < \varepsilon; \\
h_* X&=y^3\p_y, & &0 < y < \varepsilon.
\end{align*}
Then we choose $\cV_X$ to be the Lie manifold structure on $\R_+$ defined as a $C^\infty_b(\R_+)$-module generated by $X$, where $C^\infty_b(\R_+)$ are smooth bounded functions. This way we perform a two-point compactification of $\R_+$ by adding zero and infinity as boundaries. We denote this compactification by $\overline{\R}_+$

\begin{proposition}
Let $r: [0,+\infty)$ be a bounded smooth function such that $r(x) = x$ for $x\leq \varepsilon$ and $r(x) = 1$ for $x\geq 2\varepsilon$ and let $\tilde{r}(x)=r(1/x)$.
Then $T:(r\tilde{r}^2)H^2_{\cV_x}(\overline{\R}_+) \to L^2(\R_+,dx/x^3)$ is left invertible.
\end{proposition}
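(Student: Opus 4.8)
The plan is to follow the algorithm of Section~\ref{sec:1D} verbatim: realise $T$ as a conjugate of a $\cV_X$-elliptic operator, verify the hypotheses of the modified Carvalho–Nistor–Qiao conditions, compute the two limit operators, and finally promote the resulting left semi-Fredholm property to left invertibility by proving injectivity. First I would remove the weight. By definition of the weighted Sobolev space the map $v\mapsto (r\tilde r^2)v$ is an isometric isomorphism $H^2_{\cV_X}(\overline\R_+)\to (r\tilde r^2)H^2_{\cV_X}(\overline\R_+)$, and there is a unique function $\phi$ with $\phi^2\mu_{\cV_X}=dx/x^3$ (explicitly $\phi=x^{-1}$ near $0$ and $\phi=x^{-2}$ near $\infty$, using $\mu_{\cV_X}=dx/x$ near $0$ and $\mu_{\cV_X}=x\,dx$ near $\infty$) for which $f\mapsto \phi f$ is an isometric isomorphism $L^2(\R_+,dx/x^3)\to L^2_{\cV_X}$. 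Thus $T$ is left invertible if and only if $\tilde T:=\phi\,T\,(r\tilde r^2):H^2_{\cV_X}(\overline\R_+)\to L^2_{\cV_X}(\overline\R_+)$ is. A short computation using $X=x\p_x$ near $0$ and $W:=y^3\p_y$ near $\infty$ (where $y=1/x$) gives $\tilde T=(1+X)^2-x^4-1-h_0$ near $0$ and $\tilde T=(W+2y^2)^2-2y^2(W+2y^2)-1-y^4(1+h_0)$ near $\infty$; in particular $\tilde T\in\Diff^2_{\cV_X}(\overline\R_+)$ with principal symbol $\zeta^2$, so $\tilde T$ is elliptic.

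Next I would set up the groupoid and apply CNQ. As in Example~\ref{example:action_r} the end $x\to 0$ is integrated by the action groupoid $\R\ltimes[0,\varepsilon)$, while the end $y\to 0$ carries the generator $W=y^3\p_y$, whose flow is incomplete and which does not integrate to an action groupoid; here one repeats the tangency-point construction of Theorem~\ref{thm:groupoid} (choose a complete extension, integrate by Theorem~\ref{thm:glue}, and check Hausdorffness through the chart criterion Theorem~\ref{thm:charts}) to obtain a Hausdorff integration whose isotropy group at $\infty$ is the abelian group $\R$. Gluing these to the pair groupoid on the interior via Proposition~\ref{prop:glue} produces a Hausdorff stratified submersion groupoid $\cG\rightrightarrows\overline\R_+$ with $\cG_{M_0}\simeq M_0\times M_0$, so the hypotheses of Theorem~\ref{thm:cnq} hold (Remark~\ref{rem:strat_group}, both isotropy groups being amenable). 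Replacing each generator by the corresponding Lie-algebra element and evaluating coefficients on the boundary yields the two limit operators on $G_0=G_\infty=\R$: $\tilde T_0=\p_t^2+2\p_t-h_0$ and $\tilde T_\infty=\p_t^2-1$. By the Fourier computation of the $1$D example the multiplier of $\tilde T_\infty$ is $(\xi^2+1)^2\ge 1$, so $\tilde T_\infty$ is always left invertible, while that of $\tilde T_0$ is $(\xi^2+h_0)^2+4\xi^2$, bounded below by a positive constant exactly when $h_0\neq0$; hence by Lemma~\ref{lemm:semibound} and Lemma~\ref{lemma:inv} the limit operators are left invertible iff $h_0\neq0$. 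Theorem~\ref{thm:cnq} then shows $\tilde T$ is left semi-Fredholm when $h_0\neq0$; conversely the necessity built into Theorem~\ref{thm:pre_cnq} and Corollary~\ref{cor:cnq1} shows that for $h_0=0$ the operator $\tilde T$ is not even left semi-Fredholm, so it cannot be left invertible.

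It remains to prove injectivity for $h_0\neq0$. Since $\phi$ and $r\tilde r^2$ are nonvanishing on the interior, $\ker\tilde T$ is identified with the solutions of $Tu=0$, i.e. $(x\p_x)^2u=(x^4+1+h_0)u$, lying in $(r\tilde r^2)H^2_{\cV_X}$. The substitution $z=x^2$ turns this into a modified Bessel equation of order $\nu=\tfrac12\sqrt{1+h_0}$, so a basis of solutions is $\{I_\nu,K_\nu\}$ in the variable $x^2/2$. At the regular singular point $x=0$ the indicial exponents are $\pm\sqrt{1+h_0}$, and membership in $(r\tilde r^2)H^2_{\cV_X}\sim xH^2_{\cV_X}$ forces the exponent to exceed $1$; at the irregular singular point $x=\infty$ exactly one solution decays (like $e^{-x^2/2}$) and is admissible. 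This gives a clean dichotomy: if $h_0>0$ the only solution admissible at $0$ is the one $\sim x^{\sqrt{1+h_0}}$ ($=I_\nu$), which grows at $\infty$ and is inadmissible there; if $h_0<0$ both indicial exponents are $<1$ (or purely imaginary), so no solution is admissible at $0$ at all. In either case $\ker\tilde T=0$. For $h_0>0$ this is also immediate from the self-adjoint/skew-adjoint splitting $T=(A^2-x^4-h_0)+2A$ with $A=x\p_x-1$ skew-adjoint in $L^2(dx/x^3)$, since $\mathrm{Re}\langle Tu,u\rangle=-\|Au\|^2-\|x^2u\|^2-h_0\|u\|^2$ vanishes only for $u=0$.

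Combining the two parts, for $h_0\neq0$ the operator $\tilde T$ has closed range and trivial kernel, hence is left invertible by Lemma~\ref{lemma:inv}, and therefore so is $T:(r\tilde r^2)H^2_{\cV_X}(\overline\R_+)\to L^2(\R_+,dx/x^3)$. I expect the two genuinely nontrivial points to be (i) the Hausdorff integration at the $y^3\p_y$-end, where incompleteness of the flow forces the tangency-style argument rather than a direct action-groupoid description, and (ii) the injectivity analysis for $h_0<0$, where the energy estimate no longer closes and one must rely on the Bessel/indicial asymptotics to rule out admissible solutions at $x=0$.
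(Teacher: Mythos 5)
Your proposal follows the same architecture as the paper's own proof: conjugate by the weights to obtain $\tilde T=(\tilde r^2/r)\,T\,(r\tilde r^2)\in\Diff^2_{\cV_X}(\overline{\R}_+)$, apply the modified CNQ conditions (Theorem~\ref{thm:cnq}) to a Hausdorff integration of $A_{\cV_X}$, compute the two limit operators, and upgrade left semi-Fredholmness to left invertibility by an injectivity argument. Your coordinate computations check out: the forms of $\tilde T$ near $0$ and near $\infty$, the compatible measures $dx/x$ and $x\,dx$, and the limit operators $\tilde T_0=\p_z^2+2\p_z-h_0$, $\tilde T_\infty=\p_z^2-1$ all agree with the paper, and your indicial-exponent case analysis of the modified Bessel equation (together with the supplementary energy estimate for $h_0>0$) is a correct, if reorganized, version of the paper's asymptotic argument that no solution of $Tu=0$ lies even in $(r\tilde r^2)L^2_{\cV_X}(\overline{\R}_+)$.

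The one genuine flaw is the groupoid-integration step. You claim that the end $y\to 0$, with generator $y^3\p_y$, ``has incomplete flow and does not integrate to an action groupoid,'' and on that basis you invoke the tangency-point machinery of Theorem~\ref{thm:groupoid} (complete extension, Theorem~\ref{thm:glue}, Hausdorffness via Theorem~\ref{thm:charts}). The premise is false. The finite-time blow-up of $\dot y=y^3$ is a statement about the boundary chart $[0,\varepsilon)$: the flow merely exits that chart in finite time and continues in the manifold. The globally defined vector field $X$ on the two-point compactification $\overline{\R}_+$ is a smooth vector field on a \emph{compact} manifold vanishing at both boundary points, hence complete; its flow therefore defines a global $\R$-action on $\overline{\R}_+$, and the corresponding action groupoid $\R\ltimes\overline{\R}_+$ --- manifestly Hausdorff, restricting to the pair groupoid over the interior, with isotropy group $\R$ at each end --- integrates $A_{\cV_X}$. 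This one-line observation is exactly the paper's argument at this point. Your detour would, if carried out, still produce a valid Hausdorff integration, so the final conclusion survives; but as written the step rests on a mistaken claim and replaces the simplest construction in the paper (an action groupoid of a complete flow, as in Example~\ref{example:action_r}) with its heaviest machinery. A minor point besides: the statement implicitly carries the standing assumption $h_0\neq 0$ from Proposition~\ref{prop:grushin_suffering}; your explicit remark that left invertibility fails for $h_0=0$ is consistent with this and harmless.
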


\begin{proof}
It is clear that $r$ and $\tilde{r}$ are defining functions for the zero boundary and boundary at infinity. We may choose a compatible volume form as
$$
\mu_{\cV_X} = \frac{r^2(x)dx}{x^3 \tilde{r}^4(x)} 
$$
and consider the corresponding Sobolev spaces $H^k_{\cV_X}(\overline{\R}_+)$.

First we prove that $T$ is injective. For this we need to solve $T u = 0$ and show that no solution lies in $(r\tilde{r}^2)H^2_{\cV_x}(\overline{\R}_+)$. Even more we will see that none of the solutions lies in $(r\tilde{r}^2)L^2_{\cV_x}(\overline{\R}_+)$. Indeed, there are two independent solutions to $Tu=0$ described via Bessel functions:
$$
u_-(x) = K_{\frac{\sqrt{1+h_0}}{2}}\left( \frac{x^2}{2} \right), \qquad 
u_+(x) = I_{\frac{\sqrt{1+h_0}}{2}}\left( \frac{x^2}{2} \right),
$$
and any other solution is a linear combination:
$$
u(x)= c_-u_-(x) + c_+u_+(x).
$$
However, neither $u_-$ nor $u_+$ are in $L^2(dx/x^3)$. This follows from the asymptotics of Bessel functions of the second kind. For $\nu>0$ the following asymptotic relations are valid
$\nu > 0$:
\begin{align*}
I_\nu(x) &\sim  \frac{1}{\Gamma(\nu+1)}\left(\frac{x}{2}\right)^\nu,\\
K_\nu(x) &\sim \frac{\Gamma(\nu)}{2}\left(\frac{2}{x}\right)^\nu
\end{align*} 
for $x \to 0+$ and
\begin{align*}
I_\nu(x) &\sim \frac{e^x x^{-1/2}}{\sqrt{2\pi}},\\
K_\nu(x) &\sim \sqrt{\frac{\pi}{2}}e^{-x} x^{-1/2}
\end{align*} 
for $x \to \infty$.

From here we see that $u_-$ and $u_+$ have different asymptotics close to zero and different asymptotics close to infinity. Close to zero functions from $r\tilde r^2 L^2_{\cV_X}(\overline{\R}_+)$ behave exactly like functions in $L^2(\R_+,dx/x^3)$. Function $u_+$ is never $dx/x^3$ square integrable for $x<\varepsilon$ small. On the other hand for $x>\varepsilon$ big $u_-$ is never locally square integrable because of the exponential growth. Hence a solution of $T u = 0$ can not be in $r\tilde r^2 L^2_{\cV_X}(\overline{\R}_+)$.

\bigskip

Now we need to prove that $T$ is left semi-Fredholm. This is equivalent to proving that the operator
$$
\tilde{T} = r^{-1}\tilde{r}^{2}T(r\tilde{r}^2): H^2_{\cV_X}(\overline{\R}_+) \to L^2_{\cV_X}(\overline{\R}_+) 
$$
is left semi-Fredholm. First note that $A_{\cV_X}$ is integrable to a Hausdorff simply connected Lie groupoid whose restriction to $(0,+\infty)$ is the pair groupoid. Indeed, $X$ is a complete vector field on $\overline{\R}_+$ and its flow defines the action of $\R$ on $\overline{\R}_+$. Hence we can take the integrating Lie groupoid to be the corresponding action groupoid and CNQ conditions can be applied.

We have to study invertibility of limit operators $\tilde{T}_0$ and $\tilde{T}_\infty$ which will be constant coefficient differential operators on $\R$. Let $z\in \R$ be the variable on $\R$. Recall that for $x=0$ in order to compute limit operators we have to replace $x\p_x$ with $\p_z$. After a lengthy computation we find that
$$
\tilde{T}_0 = \p_z^2 + 2\p_z - h_0.
$$
But this is exactly the operator~\eqref{eq:op_1d_ex} as in the 1D example from Section~\ref{sec:1D}. So we already know that this operator is left invertible.

Let us now compute the limit operator $\tilde{T}_\infty$. In this case we have to replace $y^3\p_y$ with $\p_z$. After a change of variables and another lengthy computation we obtain
$$
\tilde{T}_\infty = \p_z^2 - 1.
$$
This operator is also invertible on $\R$.

The left semi-Fredholm property now follows from Theorem~\ref{thm:cnq} and this finishes the proof that the operator $T$ is left invertible. 
\end{proof}

Next step is to extend the action of $T$ to the Fourier dual of $G$ such that it would remain to be left invertible. For this we will need some corollaries of left invertibility conditions of Lemma~\ref{lemma:inv} and Lemma~\ref{lemm:semibound}.

\begin{corollary}
\label{cor:1}
Let $A_i: H_i \to H_3$, $i=1,2$ be closed operators between the corresponding Hilbert spaces. Assume that there exist continuous inclusions $H_1 \hookrightarrow H_2 \hookrightarrow H_3$ such that $H_i$ is dense in $H_{i+1}$ and that $A_2$ extends $A_1$. Then $A_1$ is left invertible if, and only if, $A_2$ is left invertible. 
\end{corollary}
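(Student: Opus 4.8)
The plan is to translate left invertibility into the lower bound of Lemma~\ref{lemm:semibound} and then transfer this bound between the two domains. Since $A_1$ and $A_2$ are closed operators into $H_3$, Lemma~\ref{lemm:semibound} together with Lemma~\ref{lemma:inv} tells us that $A_i$ is left invertible (equivalently, injective with closed range) if and only if there is a constant $c_i>0$ with $\|A_i u\|_{H_3}\geq c_i\|u\|_{H_3}$ for every $u$ in its domain. So the whole statement reduces to showing that such a coercivity estimate holds on $H_1$ if and only if it holds on $H_2$, the point being that the estimate is measured in the ambient norm $\|\cdot\|_{H_3}$ and not in the finer norms of $H_1$ or $H_2$.

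First I would treat the implication from $A_2$ to $A_1$, which is the cheap one. If $\|A_2 v\|_{H_3}\geq c_2\|v\|_{H_3}$ holds for all $v\in H_2$, then because $H_1\subset H_2$ and $A_1=A_2|_{H_1}$ we may simply restrict this inequality: for $u\in H_1$ we get $\|A_1 u\|_{H_3}=\|A_2 u\|_{H_3}\geq c_2\|u\|_{H_3}$. As $A_1$ is closed, Lemma~\ref{lemm:semibound} immediately yields that $A_1$ is injective with closed range, hence left invertible. Injectivity can also be seen directly, since $\ker A_1\subset\ker A_2=\{0\}$.

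The substantial direction is from $A_1$ to $A_2$, where the bound must be propagated from the smaller domain to the larger one. Given $\|A_1 u\|_{H_3}\geq c_1\|u\|_{H_3}$ for all $u\in H_1$, I would fix $v\in H_2$ and approximate it by a sequence $u_n\in H_1$. Here the density hypothesis is used in an essential way: because the inclusion $H_2\hookrightarrow H_3$ is continuous and $A_2$ is a bounded closed operator on $H_2$, the norm of $H_2$ dominates the graph norm $u\mapsto\|u\|_{H_3}+\|A_2 u\|_{H_3}$, so the $\|\cdot\|_{H_2}$-density of $H_1$ in $H_2$ is in fact density in the graph norm of $A_2$; in other words $H_1$ is a core for $A_2$. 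Choosing $u_n\to v$ in $H_2$ therefore gives simultaneously $u_n\to v$ and $A_1 u_n=A_2 u_n\to A_2 v$ in $H_3$, and passing to the limit in $\|A_1 u_n\|_{H_3}\geq c_1\|u_n\|_{H_3}$ produces $\|A_2 v\|_{H_3}\geq c_1\|v\|_{H_3}$ for every $v\in H_2$. Since $A_2$ is closed, Lemma~\ref{lemm:semibound} then shows that $A_2$ is injective with closed range, i.e.\ left invertible.

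The main obstacle is exactly this last direction, and more precisely the verification that $H_1$ is a core for $A_2$. Mere density of $H_1$ in $H_2$ would be useless if it did not also control $A_2 u_n$; the approximation has to converge in the graph norm so that the limit of $A_1 u_n$ is genuinely $A_2 v$ rather than some unrelated element of $H_3$. This is where the continuity of the inclusions $H_1\hookrightarrow H_2\hookrightarrow H_3$ and the boundedness of $A_2$ on $H_2$ enter, and it is the reason the hypotheses are stated with continuous dense inclusions rather than plain set inclusions.
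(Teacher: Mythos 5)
Your proof is correct and follows essentially the same route as the paper: both reduce left invertibility to the coercivity estimate $\|A_i u\|_{H_3}\geq c\|u\|_{H_3}$ via Lemma~\ref{lemm:semibound}, obtain the direction from $A_2$ to $A_1$ by restriction, and obtain the converse by approximating $v\in H_2$ with a sequence from the dense subspace $H_1$ and passing to the limit using the continuity of the inclusions. Your explicit remark that $H_1$ must be dense in the \emph{graph norm} of $A_2$ (i.e.\ that $H_1$ is a core, which rests on $A_2$ being bounded from $(H_2,\|\cdot\|_{H_2})$ to $H_3$, a fact the closed graph theorem supplies) is exactly the point the paper states tersely as ``$A_2$ is the closure of $A_1$ in the $H_2$-norm''.
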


\begin{proof}

We can treat $H_i$ as subspaces of $H_3$. In order to prove the result by Lemma~\ref{lemm:semibound} it is enough to show that there exists a constant $c>0$, such that
$$
\|A_1 u\|_{H_3} \geq c\|u\|_{H_3} , \quad \forall u \in H_1 \quad \iff \quad \|A_2 v\|_{H_3} \geq c\|v\|_{H_3} , \quad \forall v \in H_2. 
$$
Since $H_1 \subset H_2$ the arrow in the left direction is true by restriction. The right arrow is true because the inclusions $H_1 \subset H_2 \subset H_3$ are continuous. Thus $A_2$ is the closure of $A_1$ in the $H_2$-norm and for any sequence $u_n \subset H_1$ converging to $v\in H_2$ in $H_2$, we have that $u_n \to v$ and $Au_n \to Av$ in $H_3$. 
\end{proof}

\begin{corollary}
\label{cor:2}
Let $A: D(A) \subset H_1 \to H_1$ be a left invertible operator in a Hilbert space $H_1$. Let $H_2$ be another Hilbert space. Then the operator $A \otimes \id: D(A)\otimes H_2 \subset H_1 \otimes H_2 \to H_1 \otimes H_2$. Is left invertible.
\end{corollary}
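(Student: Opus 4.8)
The plan is to reduce the statement to the elementary fact that the tensor product of a \emph{bounded} operator with the identity is again bounded, together with the characterisation of left invertibility already recorded in Lemma~\ref{lemma:inv} and Lemma~\ref{lemm:semibound}. The key observation is that left invertibility of $A$ furnishes a genuinely bounded, everywhere-defined left inverse on $H_1$, and tensoring \emph{that} bounded operator by the identity is harmless; the unboundedness of $A$ never has to be tensored.

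First I would unwind the hypothesis. Since $A$ is left invertible, Lemma~\ref{lemm:semibound} gives a constant $c>0$ with $\|Au\|_{H_1}\geq c\|u\|_{H_1}$ for all $u\in D(A)$, so $A$ is injective with closed range $\ran A$. The map $A\colon D(A)\to \ran A$ is therefore a closed bijection onto the closed subspace $\ran A$, and its inverse extended by zero on $(\ran A)^{\perp}$ is a bounded operator $B\in \cL(H_1)$ satisfying $BAu=u$ for every $u\in D(A)$. Next I would form $B\otimes \id_{H_2}$. As the tensor product of a bounded operator with the identity, it extends uniquely to a bounded operator on the completed tensor product $H_1\otimes H_2$ with $\|B\otimes\id\|=\|B\|$: fixing an orthonormal basis $\{e_j\}$ of $H_2$ and writing $w=\sum_j u_j\otimes e_j$ with $\|w\|^2=\sum_j\|u_j\|^2$, one has $(B\otimes\id)w=\sum_j (Bu_j)\otimes e_j$ and $\|(B\otimes\id)w\|^2=\sum_j\|Bu_j\|^2\leq \|B\|^2\|w\|^2$. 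Finally I would verify that $B\otimes\id$ is a left inverse of $A\otimes\id$ on the algebraic domain $D(A)\otimes H_2$: by bilinearity it suffices to test on a simple tensor $u\otimes v$ with $u\in D(A)$, $v\in H_2$, where $(B\otimes\id)(A\otimes\id)(u\otimes v)=(BAu)\otimes v=u\otimes v$. Extending by linearity yields $(B\otimes\id)(A\otimes\id)=\id$ on $D(A)\otimes H_2$, so $A\otimes\id$ is left invertible, as claimed.

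Alternatively, and more in the spirit of the preceding lemmas, one can argue directly with the lower bound. With the same basis $\{e_j\}$, any $w\in D(A)\otimes H_2$ has an expansion $w=\sum_j u_j\otimes e_j$ with $u_j\in D(A)$ and $\sum_j\|u_j\|^2=\|w\|^2<\infty$, and, using the linearity of $A$ applied to the finite simple-tensor representation of $w$, one gets $(A\otimes\id)w=\sum_j (Au_j)\otimes e_j$. Hence $\|(A\otimes\id)w\|^2=\sum_j\|Au_j\|^2\geq c^2\sum_j\|u_j\|^2=c^2\|w\|^2$, so $A\otimes\id$ is bounded below on its domain, and Lemma~\ref{lemm:semibound} gives injectivity with closed range, i.e.\ left invertibility. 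The one genuine subtlety — and the point to be careful about — is that $A\otimes\id$ on the algebraic tensor product need not be closed, so in this second route one must pass to its closure, on which the lower bound persists by continuity; the first route avoids the issue entirely, since it tensors only the bounded operator $B$. This closedness/closure bookkeeping is where the slight care lies, the remaining computations being routine.
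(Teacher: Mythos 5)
Your proposal is correct, and it actually contains two arguments, one of which differs from the paper's. Your second ("alternative") route is essentially the paper's own proof: the paper states the lower bound $\|(A\otimes\id)(u_1\otimes u_2)\|\geq c\|u_1\otimes u_2\|$ on simple tensors, appeals to "the definition of the tensor product," and concludes by taking the closure and invoking Lemma~\ref{lemma:inv}; your orthonormal-basis expansion $w=\sum_j u_j\otimes e_j$ with $\|(A\otimes\id)w\|^2=\sum_j\|Au_j\|^2\geq c^2\|w\|^2$ is precisely the detail the paper compresses into that phrase, and your remark that one must pass to the closure (since $A\otimes\id$ on the algebraic tensor product need not be closed, while the lower bound survives the closure) matches the paper's "the rest follows by taking the closure." Your first route is genuinely different: instead of tensoring the unbounded operator and working with lower bounds, you extract from left invertibility a \emph{bounded} everywhere-defined left inverse $B\in\cL(H_1)$ (inverse of $A$ on $\ran A$, zero on $(\ran A)^{\perp}$), tensor that, and verify $(B\otimes\id)(A\otimes\id)=\id$ on $D(A)\otimes H_2$ by checking simple tensors. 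This buys you two things the paper's argument does not make explicit: the unboundedness of $A$ is never tensored, so the closedness bookkeeping disappears entirely, and "left invertible" is witnessed by an actual bounded left inverse rather than inferred from the bounded-below criterion of Lemma~\ref{lemm:semibound}. The paper's route, in turn, is shorter and stays entirely inside the lemma framework (bounded below $\Rightarrow$ injective with closed range) that the rest of Section~\ref{subsec:cnq} uses. Both arguments are sound; the only implicit hypothesis in your first route, namely that $A$ is closed so that $\ran A$ is closed and $B$ is bounded, is the same one the paper uses tacitly when it applies Lemma~\ref{lemm:semibound}.
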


\begin{proof}
From Lemma~\ref{lemm:semibound} and definition of the tensor product it readily follows that
$$
\|(A \otimes \id) (u_1 \otimes u_2) \|_{H_1 \otimes H_2} \geq c \|u_1 \otimes u_2 \|_{H_1 \otimes H_2}, \qquad \forall u_1 \in D(A), u_2 \in H_2
$$
for some $c>0$. The rest follows by taking the closure and Lemma~\ref{lemma:inv}.
\end{proof}

We apply now Corollary~\ref{cor:2} to the operator $A = T\otimes \id$ from $L^2(\R_+,dx/x^3)\otimes L^2(\R,|\xi| d\xi)\simeq L^2(G,\hat{\mu}_G)$ to itself with domain $r\tilde r^2 H^2_{\cV_x}(\overline{\R_+})\otimes L^2(\R,|\xi| d\xi)$ and find that this operator is left invertible. We can also consider $A$ as an operator
\begin{equation}
\label{eq:A_def}
A: H^2_{x\p_x}\left( \R_+,\frac{dx}{x^3} \right)\otimes L^2\left(\R_+, |\xi|d\xi\right) \to L^2\left( \R_+,\frac{dx}{x^3} \right)\otimes L^2\left(\R_+, |\xi|d\xi\right),
\end{equation}
where, as the notation suggests, the Sobolev space on the left is the closure of $C^\infty_c(\R_+)$ in the norm
$$
\|u\|^2_{H^2_{x\p_x}} = \int_{\R_+} \left( |u|^2 + |x\p_x u|^2 + |(x\p_x)^2 u|^2 \right)\frac{dx}{x^3}.
$$
If we can prove that $A$ is left invertible on this new domain, then we can apply Corollary~\ref{cor:1} to prove that $A$ is left invertible on $H^2(G)$. To see this let us write down the norm for the Hilbert space $H^2(G)$ in the $(x,\xi)$ coordinates. Using the partial Fourier transform $\cF$ and change of variables $(a,\xi)\mapsto (x,\xi)$ we can write it as 
$$
\int_\R \left( \int_{\R_+} \left( |u|^2 + |x\p_x u|^2 + |(x\p_x)^2u|^2 + |x^2 u|^2 + |x^3 \p_x u|^2+|x\p_x(x^2 u)|^2+ |x^4 u|^2 \right)\frac{dx}{x^3}\right)|\xi| d\xi 
$$
We can now see from the explicit forms of the norms that $H^2_{x\p_x}\left( \R_+,\frac{dx}{x^3} \right)\otimes L^2\left(\R_+, |\xi|d\xi\right)$ continuously embeds in $H^2(G,\hat{\mu})$. Thus left invertibility of $A$ on $H^2(G)$ will follow from Corollary~\ref{cor:1}.

This means that in order to finish the proof of Proposition~\ref{prop:grushin_suffering} and the main Theorem~\ref{thm:main}, we only have to prove that the operator $A$ defined in~\eqref{eq:A_def} is left invertible.

\begin{lemma}
Operator $A$ defined in~\eqref{eq:A_def} is left invertible. 
\end{lemma}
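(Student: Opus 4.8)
The plan is to peel off the $\xi$–variable and then transport the left invertibility already proved for $T$ across a change of functional domain. By Corollary~\ref{cor:2}, applied with the identity on $L^2(\R_+,|\xi|\,d\xi)$, the operator $A=T\otimes\id$ of~\eqref{eq:A_def} is left invertible as soon as
$$
T\colon H^2_{x\p_x}\!\left(\R_+,\tfrac{dx}{x^3}\right)\longrightarrow L^2\!\left(\R_+,\tfrac{dx}{x^3}\right)
$$
is left invertible, where $T=(x\p_x)^2-x^4-1-h_0$. So I would concentrate entirely on this one–dimensional statement. Write $D_1=r\tilde r^2 H^2_{\cV_X}(\overline\R_+)$ for the domain on which the preceding Proposition gives left invertibility of $T$, and $D_2=H^2_{x\p_x}(\R_+,dx/x^3)$ for the new domain; both carry $C^\infty_c(\R_+)$ as a dense core and embed continuously into $H_3:=L^2(\R_+,dx/x^3)$.

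The key step is the continuous dense inclusion $D_1\hookrightarrow D_2$. Near $x=0$ the two structures coincide: there $\cV_X$ is generated by $x\p_x$, the weight is $r\tilde r^2\sim x$ and the compatible volume is $\mu_{\cV_X}\sim dx/x$, so substituting $u=xv$ and differentiating shows the two norms are equivalent. On any compact subset of $(0,\infty)$ both norms are comparable to the usual $H^2$ norm by ellipticity. The only genuine point is the behaviour at $x=+\infty$. Passing to $y=1/x$ one has $X\sim y^3\p_y\sim x^{-1}\p_x$, weight $r\tilde r^2=x^{-2}$ and $\mu_{\cV_X}=x\,dx$; writing $u=x^{-2}v$, the leading term of $\|u\|_{D_1}^2$ becomes $\int |v|^2\,x\,dx=\int x^5|u|^2\,dx$, which dominates the leading $D_2$–term $\int x^{-3}|u|^2\,dx$, and the same comparison for the $x\p_x$– and $(x\p_x)^2$–terms (which only introduce further negative powers of $x$ after the substitution) gives $\|u\|_{D_2}\le C\|u\|_{D_1}$. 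In other words $D_1$ enforces precisely the fast decay at infinity needed to absorb the confining $-x^4$ term, so $D_1\subset D_2$ continuously; density of $D_1$ in $D_2$ is immediate from the common core $C^\infty_c(\R_+)$.

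With this inclusion established I would apply Corollary~\ref{cor:1} with $H_1=D_1$, $H_2=D_2$, $H_3=L^2(\R_+,dx/x^3)$, taking $A_1$ to be $T$ on $D_1$ and $A_2$ its closure in the $D_2$–norm, which extends $A_1$ and acts as $T$ on $D_2$. Since $A_1$ is left invertible by the preceding Proposition, the Corollary yields that $T\colon D_2\to L^2(\R_+,dx/x^3)$ is left invertible, and combined with the reduction via Corollary~\ref{cor:2} this proves the Lemma.

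The only delicate part is the norm comparison at infinity: one must keep track of the fact that the change of variables $y=1/x$ simultaneously converts $x\p_x$ into $y^3\p_y$ and rescales the measure, and check that \emph{every} summand of the $D_2$–norm — not merely the $L^2$ term — is dominated by the corresponding $D_1$–summand. Everything else is a formal application of Corollaries~\ref{cor:2} and~\ref{cor:1}, together with the left invertibility of $T$ on $D_1$ recorded just above.
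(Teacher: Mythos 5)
Your proposal is correct and takes essentially the same approach as the paper: the mathematical core of both arguments is the continuous dense embedding $r\tilde r^2 H^2_{\cV_X}(\overline{\R}_+)\hookrightarrow H^2_{x\p_x}\left(\R_+,\frac{dx}{x^3}\right)$, transferred into left invertibility via Corollaries~\ref{cor:1} and~\ref{cor:2}. The only difference is the (immaterial) order of operations — you apply Corollary~\ref{cor:1} at the one-dimensional level and then tensor with the identity via Corollary~\ref{cor:2}, whereas the paper tensors first and then proves the same norm inequality (via the substitution $u=r\tilde r^2 v$, the chain rule and Young's inequality) at the tensored level.
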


\begin{proof}
We have seen that $A$ as an operator
$$
A:r\tilde r^2 H^2_{\cV_x}(\overline{\R}_+)\otimes L^2(\R,|\xi| d\xi) \to L^2(G,\hat{\mu})
$$
is left invertible. Note also that $r\tilde r^2 H^2_{\cV_x}(\overline{\R}_+)\otimes L^2(\R,|\xi| d\xi)$ embeds continuously into $L^2(G,\hat{\mu})$ by Lemma~\ref{lemma:emb} and definition of the tensor product. So it only remains to prove that $r\tilde r^2 H^2_{\cV_x}(\overline{\R}_+)$ embeds continuously into $H^2_{x\p_x}\left( \R_+,dx/x^3 \right)$, i.e., that there exists a constant $c>0$ such that
\begin{equation}
\label{eq:norm_ineq}
\|u\|^2_{H^2_{x\p_x}}\leq c\|u\|^2_{r\tilde r^2 H^2_{\cV_X}(\overline{\R}_+)}, \qquad \forall u \in r\tilde r^2 H^2_{\cV_X}(\overline{\R}_+).
\end{equation}
We can write the $r\tilde r^2 H^2_{\cV_X}(\overline{\R}_+)$ norm as 
$$
\|u\|^2_{r\tilde r^2 H^2_{\cV_X}}  = \int_{\R_+} \left( \left|\frac{u}{r\tilde{r}^2}\right|^2 + \left|x\p_x \left( \frac{u}{r\tilde{r}^2} \right)\right|^2 + \left|(x\p_x)^2 \left( \frac{u}{r\tilde{r}^2} \right)\right|^2 \right)\frac{r^2 dx}{x^3\tilde{r}^4}. 
$$
We will prove~\eqref{eq:norm_ineq} for smooth function with compact support and the rest will follow by completion. The proof is straightforward and is given mainly for completeness.

Let $u = r\tilde{r}^2 v$. Then we have
\begin{align*}
\|r\tilde{r}^2 v\|^2_{H^2_{x\p_x}(\overline{\R}_+)} &= \int_{\R_+} |r\tilde r^2 v|^2\frac{dx}{x^3} + \int_{\R_+} |x\p_x \left( r\tilde r^2 v \right)|^2\frac{dx}{x^3}  + \\
&+\int_{\R_+}|(x\p_x)^2 \left( r\tilde r^2 v \right)|^2 \frac{dx}{x^3} = I_0 + I_1 + I_2.  
\end{align*}
Recalling that $\tilde{r}(x)\in[0,1] $ we obtain
$$
I_0 = \int_{\R_+} \tilde r^8 |v|^2\mu_X\leq \int_{\R_+}|v|^2\mu_X.
$$
Terms $I_1$ and $I_2$ are proved similarly. We only show the argument for $I_1$ since for $I_2$ it is almost identical but with more steps.
We have by the chain rule and Young inequality
\begin{align*}
I_1 &= \|x\p_x(r\tilde{r}^2)v \|^2_{L^2(\R_+,dx/x^3)}+\langle x\p_x(r\tilde{r}^2)v, r\tilde{r}^2 x\p_xu\rangle_{L^2(\R_+,dx/x^3)} + \\
&+  \|r\tilde{r}^2 x\p_xu \|^2_{L^2(\R_+,dx/x^3)} \leq 2\|x\p_x(r\tilde{r}^2)v \|^2_{L^2(\R_+,dx/x^3)}+ 2\|r\tilde{r}^2 x\p_xu \|^2_{L^2(\R_+,dx/x^3)},
\end{align*}
Now for the first term we find
$$
\|x\p_x(r\tilde{r}^2)v \|^2_{L^2(\R_+,dx/x^3)} = \int_{R_+} (x\p_x(r\tilde{r}^2))^2 |v|^2 \frac{\tilde{r}^4}{r^2} \mu_X. 
$$
Function
$$
f(x) = \frac{x\p_x(r(x)\tilde{r}^2(x))^2 \tilde{r}^4(x)}{r^2(x)}
$$
is smooth and bounded on $\R_+$. Indeed, $r,\tilde{r}$ are smooth and bounded. So the only problem can occur close to $x=0$. Recall that close to zero $r = x$ and $\tilde{r} = 1$ by construction. So $f(x) \equiv 1$ close to $x=0$. Hence
$$
\|x\p_x(r\tilde{r}^2)v \|^2_{L^2(\R_+,dx/x^3)}\leq C\int_{R_+} |v|^2 \mu_X.
$$
The other term is handled exactly as $I_0$ and $I_2$ is bounded by a similar argument. The lemma now follows by completion.
\end{proof}

This proves Proposition~\ref{prop:grushin_suffering} and finishes the proof of Theorem~\ref{thm:main}.

\section{Conclusions, final remarks, extensions}
\label{sec:final}

Theorem~\ref{thm:main} illustrates that it is possible to treat various singularities in almost-Riemannian geometry from a unified perspective. This raises the question of what is the domain of applicability and where are the practical limits of this method for obtaining information about geometric operators on sub-Riemannian manifolds and more generally about geometric differential operators on singular spaces.

First of all we note that it is possible to prove an analogue of Proposition~\ref{prop:prop2} in the case when there are no tangency points for the Laplace operator $\Delta$ itself. For this it is enough to consider instead of $\tilde P$ operators $\tilde P^\gamma$ defined as
$$
\tilde P^\gamma =s^{2-\gamma}\Delta s^{\gamma},
$$
for $\gamma = 1\pm \varepsilon$ and repeat the proof. But it would have resulted in much longer formulas and a more convoluted analysis. Theorem~\ref{thm:main} is already enough to explain the concept. In the case of the tangency points weights $s^\gamma$ are not enough. They behave similarly to cusp points and irregular singular points which require non-polynomial weights.

Secondly we should note that the method applies to non-generic structures as well, for examples manifold analogues of $\alpha$-Grushin planes locally modelled by vector fields $\p_x,x^n e^{\phi(x,y)}\p_y$. Indeed, the construction of the integrating Lie groupoid would be very similar and the analysis as well. Recall that in the dimension two there are only two simply connected Lie groups: the Euclidean space and the group of affine transformations of the real line. In both cases their harmonic analysis is relatively simple. For this reason we did not need to use the machinery of non-commutative harmonic analysis in this article. Partial Euclidean Fourier transform was enough.

One can then ask how the method would work in higher dimensions. Two problems are encountered in this case. The first obstacle is that we need to prove the existence of a Hausdorff integrating Lie groupoid. Even though there are no obstructions to Hausdorfness in dimension two, one can imagine that there might be some in higher dimensions. The second obstacle is that the left invertibility of limit operators might be difficult to prove in practice. From this point of view the best structures would be those that have a relatively simple harmonic analysis associated to the isotropy groups at the singular set. Having small dimensional coadjoint orbits would certainly help. In particular, if orbits are two-dimensional, then the non-commutative Fourier transform would transform limit operators to one-dimensional operators. All Lie groups of dimensions three and smaller have this property. Lie groups with coadjoint orbits of dimensions less or equal than two were classified in~\cite{coadj_2}. One of the consequences of this classification is that in dimensions greater than six all Lie groups having two-dimensional coadjoint orbits are semidirect products of abelian groups. This partially explains the prevalence in the literature of asymptotically Euclidean and asymptotically Hyperbolic manifolds which have isotropy groups of this type.

One can also use the algorithm outlined in the beginning of the Section~\ref{sec:1D} to already known results. For example, it is possible to recover the closure results for wedge operators from~\cite{mendoza}. The advantage of the method presented in the article is that CNQ Fredholm conditions can be used as a black box. There is no need for constructing an associated pseudo-differential calculus by hand. However going into the PDO structure of the problem allows to prove deeper results. For example, in~\cite{mendoza} as a by-product of constructing the parametrix using a hand-made PDO calculus the authors also proved asymptotics of the trace of the resolvent of elliptic wedge operators.
 
Finally it would be interesting to generalise this kind of results to other classes of singular sub-Riemannian manifolds. For example, an analogue of the main Theorem~\ref{thm:main} for generic Martinet manifolds~\cite{Trelat} might be proven using a similar strategy modulo some modifications required due to hypoellipticity. But of course, it would be good to have a general theoretical basis for any sub-Riemannian structure regardless of dimensions or singularities. 

\bigskip

\textbf{Acknowledgements:} 
This work was supported by the French ANR project Quaco ANR-17-CE40-0007-01 and through the CIDMA Center for Research and Development in Mathematics and Applications, and the Portuguese Foundation for Science and Technology (``FCT - Funda\c{c}\~ao para a Ci\^encia e a Tecnologia'') within the project UIDB/04106/2020.

The author would like to thank Victor Nistor for his patient explanations regarding Lie groupoids and results of~\cite{nistor_fred} as well as Eugenio Pozzoli for many fruitful discussions and useful remarks.

\bibliographystyle{plain}
\bibliography{references}

\begin{thebibliography}{10}

\bibitem{ar1}
A.~A. Agrachev, U.~Boscain, G.~Charlot, R.~Ghezzi, and M.~Sigalotti.
\newblock Two-dimensional almost-{R}iemannian structures with tangency points.
\newblock {\em Ann. Inst. H. Poincar\'{e} Anal. Non Lin\'{e}aire},
  27(3):793--807, 2010.

\bibitem{abb}
Andrei Agrachev, Davide Barilari, and Ugo Boscain.
\newblock {\em A comprehensive introduction to sub-{R}iemannian geometry},
  volume 181 of {\em Cambridge Studies in Advanced Mathematics}.
\newblock Cambridge University Press, Cambridge, 2020.
\newblock From the Hamiltonian viewpoint, With an appendix by Igor Zelenko.

\bibitem{ar2}
Andrei Agrachev, Ugo Boscain, and Mario Sigalotti.
\newblock A {G}auss-{B}onnet-like formula on two-dimensional
  almost-{R}iemannian manifolds.
\newblock {\em Discrete Contin. Dyn. Syst.}, 20(4):801--822, 2008.

\bibitem{nistor_lie}
Bernd Ammann, Robert Lauter, and Victor Nistor.
\newblock On the geometry of {R}iemannian manifolds with a {L}ie structure at
  infinity.
\newblock {\em Int. J. Math. Math. Sci.}, (1-4):161--193, 2004.

\bibitem{sobolev}
Bernd Ammann and Victor Nistor.
\newblock Weighted {S}obolev spaces and regularity for polyhedral domains.
\newblock {\em Comput. Methods Appl. Mech. Engrg.}, 196(37-40):3650--3659,
  2007.

\bibitem{yuncken2}
Iakovos Androulidakis, Omar Mohsen, and Robert Yuncken.
\newblock The convolution algebra of schwarz kernels on a singular foliation,
  2020.

\bibitem{coadj_2}
D.~Arnal, M.~Cahen, and J.~Ludwig.
\newblock Lie groups whose coadjoint orbits are of dimension smaller or equal
  to two.
\newblock {\em Lett. Math. Phys.}, 33(2):183--186, 1995.

\bibitem{aubin}
Jean-Pierre Aubin.
\newblock {\em Applied functional analysis}.
\newblock John Wiley \& Sons, New York-Chichester-Brisbane, 1979.
\newblock Translated from the French by Carole Labrousse, With exercises by
  Bernard Cornet and Jean-Michel Lasry.

\bibitem{davide_luca}
Davide Barilari and Luca Rizzi.
\newblock Sub-{R}iemannian interpolation inequalities.
\newblock {\em Invent. Math.}, 215(3):977--1038, 2019.

\bibitem{lap_comp}
Fabrice Baudoin, Erlend Grong, Kazumasa Kuwada, and Anton Thalmaier.
\newblock Sub-{L}aplacian comparison theorems on totally geodesic {R}iemannian
  foliations.
\newblock {\em Calc. Var. Partial Differential Equations}, 58(4):Paper No. 130,
  38, 2019.

\bibitem{so_me}
I.~Beschastnyi and Yu.~L. Sachkov.
\newblock Geodesics in the sub-{R}iemannian problem on the group {$\rm SO(3)$}.
\newblock {\em Mat. Sb.}, 207(7):29--56, 2016.

\bibitem{ar3}
U.~Boscain, G.~Charlot, and R.~Ghezzi.
\newblock Normal forms and invariants for 2-dimensional almost-{R}iemannian
  structures.
\newblock {\em Differential Geom. Appl.}, 31(1):41--62, 2013.

\bibitem{ar4}
U.~Boscain, G.~Charlot, R.~Ghezzi, and M.~Sigalotti.
\newblock Lipschitz classification of almost-{R}iemannian distances on compact
  oriented surfaces.
\newblock {\em J. Geom. Anal.}, 23(1):438--455, 2013.

\bibitem{ar6}
U.~Boscain, D.~Prandi, and M.~Seri.
\newblock Spectral analysis and the {A}haronov-{B}ohm effect on certain
  almost-{R}iemannian manifolds.
\newblock {\em Comm. Partial Differential Equations}, 41(1):32--50, 2016.

\bibitem{me}
Ugo {Boscain}, Ivan Beschastnyi, and Eugenio {Pozzoli}.
\newblock {Quantum confinement for the curvature Laplacian $-\frac12\Delta+cK$
  on 2D-almost-Riemannian manifolds}.
\newblock {\em arXiv e-prints}, page arXiv:2011.03300, November 2020.

\bibitem{ar5}
Ugo Boscain, Gregoire Charlot, Moussa Gaye, and Paolo Mason.
\newblock Local properties of almost-{R}iemannian structures in dimension 3.
\newblock {\em Discrete Contin. Dyn. Syst.}, 35(9):4115--4147, 2015.

\bibitem{grushin}
Ugo Boscain and Camille Laurent.
\newblock The {L}aplace-{B}eltrami operator in almost-{R}iemannian geometry.
\newblock {\em Ann. Inst. Fourier (Grenoble)}, 63(5):1739--1770, 2013.

\bibitem{bramanti}
Marco Bramanti.
\newblock {\em An invitation to hypoelliptic operators and {H}\"{o}rmander's
  vector fields}.
\newblock SpringerBriefs in Mathematics. Springer, Cham, 2014.

\bibitem{Remi}
Catarina Carvalho, R\'{e}mi C\^{o}me, and Yu~Qiao.
\newblock Gluing action groupoids: {F}redholm conditions and layer potentials.
\newblock {\em Rev. Roumaine Math. Pures Appl.}, 64(2-3):113--156, 2019.

\bibitem{nistor_fred}
Catarina Carvalho, Victor Nistor, and Yu~Qiao.
\newblock Fredholm conditions on non-compact manifolds: theory and examples.
\newblock In {\em Operator theory, operator algebras, and matrix theory},
  volume 267 of {\em Oper. Theory Adv. Appl.}, pages 79--122.
  Birkh\"{a}user/Springer, Cham, 2018.

\bibitem{raymond}
Christophe Cheverry and Nicolas Raymond.
\newblock {Handbook of Spectral Theory}.
\newblock Lecture, September 2019.

\bibitem{luca4}
Yacine Chitour, Dario Prandi, and Luca Rizzi.
\newblock {Weyl's law for singular Riemannian manifolds}.
\newblock working paper or preprint, March 2019.

\bibitem{ponge}
Woocheol Choi and Rapha\"{e}l Ponge.
\newblock Privileged coordinates and nilpotent approximation of {C}arnot
  manifolds, {I}. {G}eneral results.
\newblock {\em J. Dyn. Control Syst.}, 25(1):109--157, 2019.

\bibitem{connes}
Alain Connes.
\newblock {\em Noncommutative geometry}.
\newblock Academic Press, Inc., San Diego, CA, 1994.

\bibitem{crainic}
Marius Crainic and Rui~Loja Fernandes.
\newblock Lectures on integrability of {L}ie brackets.
\newblock In {\em Lectures on {P}oisson geometry}, volume~17 of {\em Geom.
  Topol. Monogr.}, pages 1--107. Geom. Topol. Publ., Coventry, 2011.

\bibitem{dave}
Shantanu Dave and Stefan Haller.
\newblock The heat asymptotics on filtered manifolds.
\newblock {\em J. Geom. Anal.}, 30(1):337--389, 2020.

\bibitem{debord}
Claire Debord and Georges Skandalis.
\newblock Lie groupoids, exact sequences, {C}onnes-{T}hom elements, connecting
  maps and index maps.
\newblock {\em J. Geom. Phys.}, 129:255--268, 2018.

\bibitem{georgescu3}
Jan {Derezi{\'n}ski} and Vladimir {Georgescu}.
\newblock {On the domains of Bessel operators}.
\newblock {\em arXiv e-prints}, page arXiv:2101.01001, January 2021.

\bibitem{dixmier}
Jacques Dixmier.
\newblock {\em {$C\sp*$}-algebras}.
\newblock North-Holland Publishing Co., Amsterdam-New York-Oxford, 1977.
\newblock Translated from the French by Francis Jellett, North-Holland
  Mathematical Library, Vol. 15.

\bibitem{vale_hardy}
Valentina Franceschi and Dario Prandi.
\newblock Hardy-{T}ype {I}nequalities for the {C}arnot-{C}arath\'{e}odory
  {D}istance in the {H}eisenberg {G}roup.
\newblock {\em J. Geom. Anal.}, 31(3):2455--2480, 2021.

\bibitem{luca3}
Valentina Franceschi, Dario Prandi, and Luca Rizzi.
\newblock Recent results on the essential self-adjointness of sub-laplacians,
  with some remarks on the presence of characteristic points.
\newblock {\em S\'eminaire de th\'eorie spectrale et g\'eom\'etrie}, 33:1--15,
  2015-2016.

\bibitem{luca2}
Valentina Franceschi, Dario Prandi, and Luca Rizzi.
\newblock On the essential self-adjointness of singular sub-{L}aplacians.
\newblock {\em Potential Anal.}, 53(1):89--112, 2020.

\bibitem{alessandro}
Matteo Gallone and Alessandro Michelangeli.
\newblock Quantum particle across grushin singularity.
\newblock {\em Journal of Physics A: Mathematical and Theoretical}, 2021.

\bibitem{EU}
Matteo Gallone, Alessandro Michelangeli, and Eugenio Pozzoli.
\newblock On geometric quantum confinement in {G}rushin-type manifolds.
\newblock {\em Z. Angew. Math. Phys.}, 70(6):Paper No. 158, 17, 2019.

\bibitem{georgescu1}
Vladimir Georgescu.
\newblock On the essential spectrum of elliptic differential operators.
\newblock {\em J. Math. Anal. Appl.}, 468(2):839--864, 2018.

\bibitem{georgescu2}
Vladimir Georgescu and Andrei Iftimovici.
\newblock Crossed products of {$C^\ast$}-algebras and spectral analysis of
  quantum {H}amiltonians.
\newblock {\em Comm. Math. Phys.}, 228(3):519--560, 2002.

\bibitem{mendoza}
Juan~B. Gil, Thomas Krainer, and Gerardo~A. Mendoza.
\newblock On the closure of elliptic wedge operators.
\newblock {\em J. Geom. Anal.}, 23(4):2035--2062, 2013.

\bibitem{sobolev2}
Nadine Gro\ss~e and Cornelia Schneider.
\newblock Sobolev spaces on {R}iemannian manifolds with bounded geometry:
  general coordinates and traces.
\newblock {\em Math. Nachr.}, 286(16):1586--1613, 2013.

\bibitem{log-symp}
Marco Gualtieri and Songhao Li.
\newblock Symplectic groupoids of log symplectic manifolds.
\newblock {\em Int. Math. Res. Not. IMRN}, (11):3022--3074, 2014.

\bibitem{ar7}
Philippe Jouan, Guilherme Zsigmond, and Victor Ayala.
\newblock Isometries of almost-{R}iemannian structures on {L}ie groups.
\newblock {\em Differential Geom. Appl.}, 61:59--81, 2018.

\bibitem{karoubi}
Max Karoubi.
\newblock {\em {$K$}-theory}.
\newblock Classics in Mathematics. Springer-Verlag, Berlin, 2008.
\newblock An introduction, Reprint of the 1978 edition, With a new postface by
  the author and a list of errata.

\bibitem{skandalis}
Mahmood Khoshkam and Georges Skandalis.
\newblock Regular representation of groupoid {$C^*$}-algebras and applications
  to inverse semigroups.
\newblock {\em J. Reine Angew. Math.}, 546:47--72, 2002.

\bibitem{kumpera}
Antonio Kumpera and Donald Spencer.
\newblock {\em Lie equations. {V}ol. {I}: {G}eneral theory}.
\newblock Princeton University Press, Princeton, N.J.; University of Tokyo
  Press, Tokyo, 1972.
\newblock Annals of Mathematics Studies, No. 73.

\bibitem{nistor_pdo}
Robert Lauter and Victor Nistor.
\newblock Analysis of geometric operators on open manifolds: a groupoid
  approach.
\newblock In {\em Quantization of singular symplectic quotients}, volume 198 of
  {\em Progr. Math.}, pages 181--229. Birkh\"{a}user, Basel, 2001.

\bibitem{lescure}
Jean-Marie Lescure and St\'{e}phane Vassout.
\newblock Fourier integral operators on {L}ie groupoids.
\newblock {\em Adv. Math.}, 320:391--450, 2017.

\bibitem{lg}
Kirill C.~H. Mackenzie.
\newblock {\em General theory of {L}ie groupoids and {L}ie algebroids}, volume
  213 of {\em London Mathematical Society Lecture Note Series}.
\newblock Cambridge University Press, Cambridge, 2005.

\bibitem{mackenzie}
Kirill C.~H. Mackenzie.
\newblock {\em General theory of {L}ie groupoids and {L}ie algebroids}, volume
  213 of {\em London Mathematical Society Lecture Note Series}.
\newblock Cambridge University Press, Cambridge, 2005.

\bibitem{weyl_sr}
Guy M\'{e}tivier.
\newblock Fonction spectrale et valeurs propres d'une classe d'op\'{e}rateurs
  non elliptiques.
\newblock {\em Comm. Partial Differential Equations}, 1(5):467--519, 1976.

\bibitem{folliations}
I.~Moerdijk and J.~Mr\v{c}un.
\newblock {\em Introduction to foliations and {L}ie groupoids}, volume~91 of
  {\em Cambridge Studies in Advanced Mathematics}.
\newblock Cambridge University Press, Cambridge, 2003.

\bibitem{orbifolds}
Ieke Moerdijk.
\newblock Orbifolds as groupoids: an introduction.
\newblock In {\em Orbifolds in mathematics and physics ({M}adison, {WI},
  2001)}, volume 310 of {\em Contemp. Math.}, pages 205--222. Amer. Math. Soc.,
  Providence, RI, 2002.

\bibitem{mantoiu3}
Marius M\u{a}ntoiu.
\newblock {$C^\ast$}-algebras, dynamical systems at infinity and the essential
  spectrum of generalized {S}chr\"{o}dinger operators.
\newblock {\em J. Reine Angew. Math.}, 550:211--229, 2002.

\bibitem{mantoiu2}
Marius M\u{a}ntoiu, Radu Purice, and Serge Richard.
\newblock On the essential spectrum of magnetic pseudodifferential operators.
\newblock {\em C. R. Math. Acad. Sci. Paris}, 344(1):11--14, 2007.

\bibitem{mantoiu1}
Marius~Lauren\c{t}iu M\v{a}ntoiu.
\newblock Essential spectrum and {F}redholm properties for operators on locally
  compact groups.
\newblock {\em J. Operator Theory}, 77(2):481--501, 2017.

\bibitem{nistor_integration}
Victor Nistor.
\newblock Groupoids and the integration of {L}ie algebroids.
\newblock {\em J. Math. Soc. Japan}, 52(4):847--868, 2000.

\bibitem{nistor_general}
Victor Nistor.
\newblock Analysis on singular spaces: {L}ie manifolds and operator algebras.
\newblock {\em J. Geom. Phys.}, 105:75--101, 2016.

\bibitem{nistor_desing}
Victor Nistor.
\newblock Desingularization of {L}ie groupoids and pseudodifferential operators
  on singular spaces.
\newblock {\em Comm. Anal. Geom.}, 27(1):161--209, 2019.

\bibitem{boundary}
Victor Nistor, Alan Weinstein, and Ping Xu.
\newblock Pseudodifferential operators on differential groupoids.
\newblock {\em Pacific J. Math.}, 189(1):117--152, 1999.

\bibitem{luca1}
Dario Prandi, Luca Rizzi, and Marcello Seri.
\newblock Quantum confinement on non-complete {R}iemannian manifolds.
\newblock {\em J. Spectr. Theory}, 8(4):1221--1280, 2018.

\bibitem{heat_tommaso}
Luca Rizzi and Tommaso Rossi.
\newblock Heat content asymptotics for sub-{R}iemannian manifolds.
\newblock {\em J. Math. Pures Appl. (9)}, 148:267--307, 2021.

\bibitem{stein_roth}
Linda~Preiss Rothschild and E.~M. Stein.
\newblock Hypoelliptic differential operators and nilpotent groups.
\newblock {\em Acta Math.}, 137(3-4):247--320, 1976.

\bibitem{sobolev1}
Gorm Salomonsen.
\newblock Equivalence of {S}obolev spaces.
\newblock {\em Results Math.}, 39(1-2):115--130, 2001.

\bibitem{Trelat}
Emmanuel Tr\'{e}lat.
\newblock Non-subanalyticity of sub-{R}iemannian {M}artinet spheres.
\newblock {\em C. R. Acad. Sci. Paris S\'{e}r. I Math.}, 332(6):527--532, 2001.

\bibitem{poisson}
Izu Vaisman.
\newblock {\em Lectures on the geometry of {P}oisson manifolds}, volume 118 of
  {\em Progress in Mathematics}.
\newblock Birkh\"{a}user Verlag, Basel, 1994.

\bibitem{vanerp}
Erik van Erp and Robert Yuncken.
\newblock On the tangent groupoid of a filtered manifold.
\newblock {\em Bull. Lond. Math. Soc.}, 49(6):1000--1012, 2017.

\bibitem{vanerp2}
Erik van Erp and Robert Yuncken.
\newblock On the tangent groupoid of a filtered manifold.
\newblock {\em Bull. Lond. Math. Soc.}, 49(6):1000--1012, 2017.

\bibitem{trelat_heat}
Y.~C. Verdi{\`e}re, L.~Hillairet, and E.~Tr{\'e}lat.
\newblock Small-time asymptotics of hypoelliptic heat kernels near the
  diagonal, nilpotentization and related results.
\newblock {\em arXiv: Analysis of PDEs}, 2020.

\bibitem{weinstein}
Alan Weinstein.
\newblock Symplectic groupoids and {P}oisson manifolds.
\newblock {\em Bull. Amer. Math. Soc. (N.S.)}, 16(1):101--104, 1987.

\bibitem{wunsh}
Jared Wunsch.
\newblock Microlocal analysis and evolution equations: lecture notes from 2008
  {CMI}/{ETH} summer school.
\newblock In {\em Evolution equations}, volume~17 of {\em Clay Math. Proc.},
  pages 1--72. Amer. Math. Soc., Providence, RI, 2013.

\end{thebibliography}

\end{document}